\newtheorem{theo}{Theorem}
\newtheorem{defn}[theo]{Definition}
\newtheorem{exam}[theo]{Example}
\newtheorem{lem} [theo]{Lemma}
\newtheorem{cor}[theo]{Corollary}
\newtheorem{prop}[theo]{Proposition}
\newtheorem{rem}[theo]{Remark}
\newtheorem{prob}[theo]{Problem}
\renewcommand{\arraystretch}{1.3}
\newcommand\xmod[1]{\pmod{\langle #1 \rangle}}
\newcommand\pxmod[3]{#1 \equiv #2  \pmod{\langle #3 \rangle}}
\def\CT{\mathop{\mathrm{CT}}}
\def\char{\textrm{char}}
\def\N{\mathbb{N}}
\def\Z{\mathbb{Z}}
\def\Q{\mathbb{Q}}
\def\x{\mathbf{x}}
\def\m{\mathrm{m}}
\def\y{\mathbf{y}}
\newcommand\mmm[1]{ operations in $(#1)$}
\def\Ryrs{R[y_1,\dots, y_r][[s]]/\langle s^d \rangle}
\title[Generalized Todd Polynomials: Applications to MPA and IP]{Fast Evaluation of Generalized Todd Polynomials: Applications to MacMahon's Partition Analysis and Integer Programming}
\author{
Guoce Xin$^{1}$, Yingrui Zhang$^{2,*}$ and ZiHao Zhang$^{3}$
}
 \address{ $^{1,3}$School of Mathematical Sciences, Capital Normal University,
 Beijing 100048, PR China}
 \address{
  $^2$KLMM, Academy of Mathematics and Systems Science Chinese Academy of Sciences, Beijing 100190, PR China}
 \email{$^1$\texttt{guoce\_xin@163.com}\ \& $^2$\texttt{zyrzuhe@126.com} \& $^3$\texttt{zihao-zhang@foxmial.com}}
\date{ \today }
\thanks{*Corresponding author.}
\begin{document}

\begin{abstract}
The Todd polynomials, denoted as $td_k(b_1,b_2,\ldots,b_m)$, are characterised by their generating functions:
$$\sum_{k\ge 0} td_k s^k = \prod_{i=1}^m \frac{b_i s}{e^{b_i s}-1}.$$
These polynomials serve as fundamental components in the Todd class of toric varieties, a concept of significant relevance in the study of lattice polytopes and number theory. We identify that generalised Todd polynomials emerge naturally within the framework of MacMahon's partition analysis, particularly in the context of computing Ehrhart series.
We introduce an efficient method for the evaluation of generalised Todd polynomials for numerical values of $b_i$. This is achieved through the development of expedited operations in the quotient ring $\mathbb{Z}_p[[s]]$ modulo $s^{d}$, where $p$ is a large prime. The practical implications of our work are demonstrated through two applications: firstly, we facilitate a recalculated resolution of the Ehrhart series for magic squares of order 6, a problem initially addressed by the first author, reducing computation time from 70 days to approximately 1 day; secondly, we present a polynomial-time algorithm for Integer Linear Programming when the dimension is fixed, exhibiting a notable enhancement in computational efficiency.
\end{abstract}

\maketitle

\noindent
\begin{small}
 \emph{Mathematics subject classification}: Primary 05-04; Secondary 05E14, 05A15.
\end{small}

\noindent
\begin{small}
\emph{Keywords}: MacMahon’s partition analysis; Lattice points counting;  Ehrhart theory; Integer Linear Programming; Todd polynomials.
\end{small}

\section{Introduction}\label{S-Introduction}

The Todd class of a toric variety is important in the theory of lattice polytopes and in number theory, and it has been shown that Dedekind sums and their generalizations appear naturally in formulas for the Todd class (see, e.g., \cite{barvinokPommersheim1999summary,cappell1994genera,pommersheim1993toric}). The closely related
Todd polynomials $td_k=td_k(B)$, where $B$ is a multi-set (allow repetitions) of nonzero integers, are defined by their generating function
$$F(s)=\sum_{k=0}^{\infty} td_k(B)s^k=\prod_{b_i \in B} f(b_is)= \prod_{b_i \in B} \frac{b_is}{e^{b_is}-1}.$$ 
They are also called the Todd symmetric polynomials.

In lattice point counting theory, a remarkable result is
Barvinok's ``short" rational representation of the generating polynomial of a rational $\bar{d}$ dimensional polytope $P$:
\begin{align}\label{Barvinok-short-sum}
 f(P;\y)= \sum_{\alpha \in P\cap \Z^{\bar{d}}}  \y^{\alpha} =\sum_{i\in I} \epsilon_i \frac{\y^{u_i}}{\prod_{j=1}^{\bar{d}} (1-\y^{v_{ij}})},
\end{align}
where $\y^\alpha= y_1^{\alpha_1} \cdots y_{\bar{d}}^{\alpha_{\bar{d}}}$, $\epsilon_i\in \{\pm 1\}$. The ``short" here means that the size of the index set $I$ is polynomial in the input size. The Todd polynomials $td_k(B)$ arise in the computation of $f(P;\mathbf{1})$, i.e., the number of lattice points in $P$. See \cite{barvinokwodd2003short,barvinokPommersheim1999summary,verdoolaege2008counting}.
In \cite{de2009ehrhartTodd}, {De Loera, Haws and K{\"o}ppe} show that the technique from Barvinok \cite[Section 5]{barvinok2006computing} can be implemented and obtain a polynomial-time algorithm to evaluate $td_k(B)$ when $k$ is fixed.

There is a parallel development in Algebraic Combinatorics in terms of MacMahon's partition analysis (MPA for short). Indeed MPA applies more generally. Andrews and his coauthors have published a series of 14 papers on MPA beginning with \cite{andrews1998MacMahonfirstpaper}. A CTEuclid algorithm was developed in \cite{xin2015euclid} for complicated combinatorial problems. It succeeds to enumerate magic squares of order $6$.

Generalized Todd polynomial appeared in one important step of CTEuclid, called dispelling the slack variables. This is equivalent to setting $y_i=1$ in \eqref{Barvinok-short-sum} for only part of the $y_i$'s. This is also equivalent to computing generalized Todd polynomial of a special type.

In this paper, we give a fast algorithm for computing $td_k(B)$ and extend it to generalized Todd polynomials, whose generating function is given by
\begin{align*}
 F(s)=e^{as} \frac{\prod_{b \in \bar{B}_0} (1-e^{b s})} {\prod_{b \in B_0} (1- e^{b s})}
\prod_{i=1}^r \frac{\prod_{b \in \bar{B}_i } (1- e^{b s} t_i)} {\prod_{b \in B_i} (1- e^{b s}t_i)},
\end{align*}
where i) $B_i,\bar B_i$ are multi-sets of nonzero rational numbers for $i=0,1,\dots, r$. ii) $t_i\neq 1$ could be distinct numbers, algebraic numbers, variables, or $t_i=t^{m_i}$ for a variable $t$.

As applications, i) we recompute the Ehrhart series of magic squares of order 6, which was first solved by the first named author. The running time is reduced from 70 days to about 1 day; ii) we give a polynomial time algorithm for Integer Linear Programming when the dimension is fixed, with a good performance.

The paper is organized as follows. In Section \ref{s-FastManipulations}, we set up the fast operations in
$\Z_p[[s]]$ modulo $s^{d}$. These include multiplication, division, logarithm and exponential. These operations have quasi linear complexity, i.e., $O(d\log d)$.
In Section \ref{s-FastGToddP}, we give fast computation of generalized Todd polynomials. Basically,
we first compute $\ln F(s)$ modulo $s^{d}$ and then take exponential modulo $s^{d}$.
In Section \ref{s-ConstantTermsGTodd}, we present an algorithm to compute the constant terms of generalized Todd type.
In Section \ref{s-ApplicationMPA}, we give an application to MPA and describe how we recompute the Ehrhart series of magic squares of order 6. Finally, in Section \ref{s-ApplicationILP},
we give a polynomial algorithm for Integer Linear Programming when the dimension is fixed. We report our computational experiments.

\section{Fast Manipulations for $R[[s]]$ Modulo $s^{d}$}\label{s-FastManipulations}
In this section, we focus on operations within the framework of a commutative ring $R$ that encompasses either the field of rational numbers, $\Q$, or the finite Galois field $\Z_p=\Z/(p\Z)$, where $p$ is a prime number. Our objective is to provide efficient algorithms for computations in the quotient ring $R[[s]]/\langle s^{d} \rangle$, which will form the cornerstone of our subsequent developments.

We favor working over $\Z_p$ to circumvent the large integer problem. Consider a scenario where we want to determine a positive integer $N$ with a bounded number of digits through an extended computational process. These might involve expressing $N$ as a sum of rational numbers with long digits. A commonly employed strategy is to compute $N\mod p_i$ for different prime numbers $p_1,\dots, p_k$ such that $p_1\cdots p_k >N$, and subsequently use the Chinese remainder theorem to reconstruct $N$. See Subsection \ref{App-MagicSquares} for an illustration of this approach. A critical consideration is that multiples of $p$ are congruent to zero in $\Z_p$, and hence cannot be divided in $R$ when it contains $\Z_p$.

\subsection{Notation and Seven Operations}\label{ssec-operations}
We introduce the notation for a power series $f(s)$ over $R$, which has coefficients $\{f_n\}_{n\ge 0}$, thus $g(s)=\sum_{n\ge 0} g_n s^n$. Our focus is on operations within the quotient ring $R[[s]]/\langle s^{d} \rangle$, where $\langle s^{d} \rangle$ denotes the ideal generated by $s^{d}$. This is equivalent to performing computations modulo $s^{d}$. Therefore, we have
$$\pxmod{\sum_{n=0}^\infty f_n s^n }{\sum_{n=0}^{d-1} f_n s^n}{s^{d}},  \qquad \text{ or } \sum_{n=0}^\infty f_n s^n
 \xmod{s^{d}} =\sum_{n=0}^{d-1} f_n s^n.$$
We extend our considerations to the polynomial ring $R[\y]$, where $\y=(y_1,\dots,y_r)$, with the understanding that the value of $r$ will be evident from the context. When $r=0$, $R[\y]$ reduces to $R$. We introduce a new concept, which we term ``regularity."
\begin{defn}
Let $f(s,\mathbf{y})\in R[y_1,\dots, y_r][[s]]$, implying that $f_n\in R[\y]$ for all $n$. We say $f(s,\mathbf{y})$ is \emph{regular} in $\mathbf{y}$
if the total degree of $f_n$ in the $y_i$'s, denoted $\deg_{\mathbf{y}} f_n$, is at most $n$ for all $n$. We denote the set of all such $f(s,\mathbf{y})$ by $R[\y]^{reg}[[s]]$.
\end{defn}

We outline seven operations over $R^{reg}[\y][[s]]/\langle s^{d} \rangle$, which implies that the input and output are indeed polynomials in $s$ of degree less than $d$. Such polynomials regular in $\y$ have $
\binom {d+r}{r+1}  = O(d^{r+1}) $ coefficients.

The seven operations are categorized into two groups based on their complexity, which are measured by the number of (ring) operations in $R$:
\begin{enumerate}
\item[G1.] Addition, differentiation, and integration;
\item[G2.] Multiplication, division, exponential, and logarithm.
\end{enumerate}

The G1 operations are relatively straightforward.
\begin{align}
 f(s)\pm g(s) \xmod{s^{d}} &=\sum_{i=0}^{d-1} (f_i\pm g_i) s^i, \tag{addition}\\
 f'(s) \xmod{s^{d}} &=f_1+2f_2s+\cdots +df_{d}s^{d-1},  \tag{derivation}\\
 \int_s f(s)\xmod{s^{d}}&=C+f_0 s+\cdots +f_{d-2} \frac{s^{d-1}}{d-1}, \tag{integration}
\end{align}
where the constant $C$ (with default value $1$) will be clear from the context.
Observe that: i) for addition, we only need $f_i,g_i$ for $0\leq i \leq d-1$;
ii) for derivation, we need $f_i$ for $1\leq i \leq d$;
iii) for integral, we need $f_i$ for $0\leq i \leq d-2$, and the additional
condition $d<\char(R)$ or $\Q\subset R$ to ensure $1/i\in R$.
These operations are performed over the quotient ring $R[y_1,\dots, y_r]^{reg}[[s]]/\langle s^{d} \rangle$. The complexity for these operations is
$O(d^{r+1})$ operations in $R$, with the notable exception that only addition preserves the regularity of the power series.

The G2 operations are more intricate. We have the following result.
\begin{prop}\label{prop-regular}
Suppose $f(s),g(s)\in R[y_1,\dots, y_r]^{reg}[[s]] $. Then if exist,
the G2 operations for $f(s)g(s)$, $g(s)/f(s)$,
$e^{f(s)}$, and $\ln(f(s))$ are
also regular in $\y$.

Furthermore, when operating within the quotient ring $R[y_1,\dots, y_r]^{reg}[[s]]/\langle s^{d} \rangle$, the G2 operations can be accomplished using $O(d^{2r+2})$ \mmm{R} in a conventional manner. In the special case where $R[[s]]/\langle s^{d} \rangle$ is considered, the G2 operations can be executed in $O(d^2)$ \mmm{R}.
\end{prop}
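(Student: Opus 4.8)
The plan is to prove the regularity claim first, then the complexity claims. For regularity, I would argue operation by operation. The key observation is a closure property: if $f,g$ are regular, meaning $\deg_{\mathbf y} f_n \le n$ and $\deg_{\mathbf y} g_n \le n$, then the product $h = fg$ has $h_n = \sum_{i+j=n} f_i g_j$, and each term $f_i g_j$ has $\mathbf y$-degree at most $i + j = n$, so $\deg_{\mathbf y} h_n \le n$; hence $fg$ is regular. For the inverse $1/f$ (requiring $f_0$ invertible in $R[\mathbf y]$, which forces $f_0$ to be a nonzero constant since it must be a unit, and more precisely $\deg_{\mathbf y} f_0 \le 0$), I would use the recursion $h_n = -f_0^{-1}\sum_{i=1}^{n} f_i h_{n-i}$ and induct: assuming $\deg_{\mathbf y} h_m \le m$ for $m<n$, each summand $f_i h_{n-i}$ has degree $\le i + (n-i) = n$. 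Division $g/f$ then follows by composing with multiplication. For $\ln f$ (requiring $f_0 = 1$, or at least $f_0$ a constant so that $\ln f_0 \in R$), I would use $(\ln f)' = f'/f$: both $f'$ and $1/f$ are regular — here I must check that differentiation, which shifts coefficients down by one and multiplies by $n$, sends $\deg_{\mathbf y} f_n \le n$ to $\deg_{\mathbf y}(n f_n) \le n - 1$ for the coefficient of $s^{n-1}$, so $f'$ is in fact "super-regular", and then $f'/f$ is regular; integrating a regular series term-by-term ($h_n/n$ becomes coefficient of $s^{n+1}$) again yields a regular series, and the constant of integration $\ln f_0$ lives in $R$, degree $0$. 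Finally $e^f$ (requiring $f_0$ such that the exponential makes sense, typically $f_0 = 0$ or $f_0$ a constant) satisfies $(e^f)' = f' e^f$; writing $h = e^f$, the recursion $n h_n = \sum_{i} i f_i h_{n-i}$ (for $n\ge 1$) plus induction gives $\deg_{\mathbf y} h_n \le n$, using that $i f_i$ contributes degree $\le i$ and $h_{n-i}$ degree $\le n-i$.

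**Next I would address the complexity.** For the general ring $R[\mathbf y]^{reg}[[s]]/\langle s^d\rangle$: a regular power series truncated mod $s^d$ is stored as its $O(d^{r+1})$ coefficients (as noted, $\binom{d+r}{r+1}$ of them). The "conventional" schoolbook multiplication of two such truncated series computes $h_n = \sum_{i+j=n} f_i g_j$ for $n = 0,\dots,d-1$; each product $f_i g_j$ is a multiplication of two polynomials in $R[\mathbf y]$ of $\mathbf y$-degree $\le i$ and $\le j$, which by schoolbook polynomial multiplication in $r$ variables costs $O(i^r \cdot j^r) = O(d^{2r})$ operations in $R$ (the number of monomials of degree $\le i$ in $r$ variables is $O(i^r)$), and there are $O(d^2)$ such pairs $(i,j)$ over all $n$, giving $O(d^{2r+2})$. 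Division is computed via the $O(d)$-step recursion above, each step being $O(d)$ polynomial multiplications of cost $O(d^{2r})$, again $O(d^{2r+2})$; exponential and logarithm reduce to differentiation/integration (cheap, $O(d^{r+1})$) together with one division or one multiplication-recursion, so they inherit the $O(d^{2r+2})$ bound. In the special case $r = 0$, $R[\mathbf y] = R$, each coefficient operation is a single operation in $R$, schoolbook multiplication of two degree-$<d$ series is $O(d^2)$, and the recursions for division, $\exp$, $\ln$ are likewise $O(d^2)$ — this is just the classical fact about power series arithmetic modulo $s^d$.

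**The main obstacle** I anticipate is bookkeeping rather than genuine difficulty: making the "if exist" hypotheses precise for each of division, $\ln$, $\exp$ (e.g.\ $f_0$ must be a unit of $R[\mathbf y]$ for division, which by the regularity $\deg_{\mathbf y} f_0 \le 0$ means $f_0 \in R^\times$; $\ln$ needs $f_0 \in R^\times$ and an interpretation of $\ln f_0$, typically $f_0 = 1$; $\exp$ needs $f_0$ in the domain of $\exp$, typically $f_0 = 0$ for formal manipulations, or a constant so $e^{f_0}\in R$), and correctly threading these through the inductive degree estimates. A secondary subtlety is the precise meaning of "conventional manner": I would state that we use schoolbook (quadratic) polynomial arithmetic throughout, so that the nested schoolbook multiplications in $s$ and in $\mathbf y$ compound to $O(d^{2r+2})$; the quasi-linear improvements promised in the introduction (FFT-based, $O(d\log d)$ when $r=0$) are deferred and are not part of this proposition. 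I would present the argument as: (1) a lemma that differentiation improves regularity by one degree and integration preserves regularity; (2) the four inductive closure arguments for $\times, /, \exp, \ln$; (3) the complexity count for each, handling $r \ge 1$ and $r = 0$ in parallel.
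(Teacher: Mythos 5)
Your overall approach — closure of regularity under each G2 operation by induction on the convolution-type recursions, plus a nested-schoolbook count for the complexity — is the same strategy as the paper's (sketched) proof, and most of what you write is correct and in fact more detailed than the paper's sketch (the paper only spells out multiplication for regularity, and only the $r=0$ case for complexity, deferring the rest to the recursions of Lemmas~\ref{lem-div}--\ref{lem-log} and to the following subsection).

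There is, however, a genuine error in your treatment of $\ln f$: the claim that differentiation makes a regular series ``super-regular'' is false, and in fact inverts the direction of the degree shift. If $\deg_{\mathbf y} f_n \le n$, then the coefficient of $s^{n-1}$ in $f'$ is $n f_{n}$, whose $\mathbf y$-degree is $\le n$, not $\le n-1$ — multiplying a polynomial in $\mathbf y$ by the scalar $n$ does not lower its $\mathbf y$-degree. So $f'$ is \emph{worse} than regular by one, and $f'/f$ (which is what you actually need) has $\deg_{\mathbf y}(f'/f)_m \le m+1$. Your final conclusion is still right, but for a different reason than you state: the integration step shifts $s$-indices up by one, so the coefficient of $s^{n+1}$ in $\int(f'/f)$ is $(f'/f)_n/(n+1)$, whose degree is $\le n+1$, giving regularity of $\ln f$. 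The paper explicitly remarks that among the G1 operations ``only addition preserves the regularity of the power series,'' which directly contradicts your intermediate claim; it then sidesteps the bookkeeping altogether by inducting on the coefficient recursion $h_k = f_k - \frac{1}{k}\sum_{i=1}^{k-1} i h_i f_{k-i}$ from Lemma~\ref{lem-log}, where each summand manifestly has $\mathbf y$-degree $\le i + (k-i) = k$. You should either adopt that cleaner recursion-based argument for $\ln$, or correct the degree-tracking through $f' \to f'/f \to \int$ as described above; as written, the intermediate step is wrong even though the endpoint happens to be correct.
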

\begin{proof}[Sketched proof]
For the regular property, we only give details for the multiplication $f(s)g(s)$. The other cases
follows similarly by using induction and the recursions in Lemmas \ref{lem-div},\ref{lem-exp},\ref{lem-log} below.

Multiplication is defined by
the traditional convolution formula
\begin{align} \label{e-h=fg}
 f(s)g(s) \xmod{s^{d}} =  \sum_{i=0}^{d-1} h_i s^i, \text{where~} h_i=\sum_{k=0}^{i} f_k g_{i-k}.
 \end{align}
Then it is clear that $h(s) \xmod{s^{d}}$ is uniquely determined by $f(s) \xmod{s^{d}}$
and $g(s) \xmod{s^{d}}$.

Since $f(s)$ and $g(s)$ are regular in $\y$, then $\deg_{\y} f_i\leq i$ and $\deg_{\y} g_i\leq i$ for all $i$.
It follows that $\deg_{\y} h_i \leq \max \{\deg_{\y} f_k+\deg_{\y} g_{i-k}: 0\leq k \leq i\} \leq i$, and hence
$h(s)$ is regular in $\y$.

For the complexity, we will give nicer results in the next subsection. Here
we only consider the $r=0$ case. Thus $f_i,g_i\in R$, and the $O(d^2)$ complexity for $h_0,\dots, h_{d-1}$ is clear by direct expansion.
\end{proof}

For division, exponential, and logarithm we have the following three lemmas.
\begin{lem}\label{lem-div}
If $f(s)$ and $h(s)$ are power series in $R[[s]]$ with $f_0^{-1}\in R$,
then the coefficients of $g(s)=h(s)/f(s)$ can be computed by the recursion
\begin{align}
  \label{e-rec-division}
g_i=f_0^{-1}\Bigg(h_i-\sum_{k=0}^{i-1}f_{i-k} g_{k}\Bigg), \text{with~} g_0=h_0/f_0.
\end{align}

In particular, $g(s) \xmod{s^{d}}$
can be computed from $h(s) \xmod{s^{d}}$ and $f(s) \xmod{s^{d}}$ using $O(d^2)$ \mmm{R} in a conventional manner.
\end{lem}
\begin{proof}
The recursion for $g_i$ is clearly equivalent to Equation \eqref{e-h=fg}. The rest part is easy.
\end{proof}

\begin{lem}\label{lem-exp}
If $h(s)$ is a power series in $R[[s]]$ with $h_0=0$,
then the coefficients of $f(s)=e^{h(s)}$ can be computed by the recursion
\begin{align}\label{e-exp(H)}
f_0&=1; \quad f_n= \frac 1 n  \sum_{i=1}^{n} ih_{i} f_{n-i} , \text{ if } n\geq 1.
\end{align}
In particular, if $d<\char(R)=p$ or $\Q\subset R$, then $f(s) \xmod{s^{d}}$
can be computed from $h(s) \xmod{s^{d}}$ using $O(d^2)$ \mmm{R} in a conventional manner.
\end{lem}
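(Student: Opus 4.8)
The plan is to derive the recursion from the first-order differential equation satisfied by $f(s)=e^{h(s)}$, and then simply count operations. First I would observe that, because $h_0=0$, the composition $e^{h(s)}$ is a well-defined element of $R[[s]]$: only finitely many terms of the exponential series contribute to each coefficient of $s^n$, and no division is needed to form them. In particular $f_0=e^{h_0}=1$. Differentiating $f=e^{h}$ gives the identity $f'(s)=h'(s)\,f(s)$, and conversely this equation together with the initial condition $f_0=1$ has a unique solution in $R[[s]]$, so it characterizes $f$ and may be used to compute it.

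Next I would extract coefficients of $s^{n-1}$ from $f'(s)=h'(s)f(s)$. Writing $f'(s)=\sum_{n\ge 1} n f_n s^{n-1}$ and $h'(s)f(s)=\sum_{n\ge 1}\bigl(\sum_{i=1}^{n} i\,h_i f_{n-i}\bigr)s^{n-1}$, comparison of the coefficient of $s^{n-1}$ yields $n f_n=\sum_{i=1}^{n} i\,h_i f_{n-i}$, which is exactly \eqref{e-exp(H)} after dividing by $n$. For this division to be legitimate over $R$ one needs $n$ to be invertible for $1\le n\le d-1$, and this is guaranteed precisely by the hypothesis $d<\char(R)=p$ (so that none of $1,\dots,d-1$ is a multiple of $p$) or $\Q\subset R$; this is where the characteristic assumption enters.

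Finally, for the complexity and the ``modulo $s^{d}$'' assertion: the recursion expresses $f_n$ in terms of $h_1,\dots,h_n$ and $f_0,\dots,f_{n-1}$ only, so knowing $f(s)\xmod{s^{d}}$ requires only $h(s)\xmod{s^{d}}$, and one computes $f_0,f_1,\dots,f_{d-1}$ in this order. Forming the sum $\sum_{i=1}^{n} i\,h_i f_{n-i}$, including the scalings by $i$ and the final multiplication by $1/n$, costs $O(n)$ operations in $R$, so the total is $\sum_{n=1}^{d-1}O(n)=O(d^2)$ operations in $R$, as claimed.

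I do not expect a genuine obstacle here: the argument is the standard logarithmic-derivative trick. The only points requiring a little care are the well-definedness of $e^{h(s)}$, which is exactly why the hypothesis $h_0=0$ is imposed, and the invertibility of the integers $1,\dots,d-1$ in $R$, which is exactly why the hypothesis $d<\char(R)$ or $\Q\subset R$ is imposed.
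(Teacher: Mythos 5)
Your argument is exactly the paper's: differentiate $f=e^{h}$ to obtain $f'=h'f$, compare coefficients of $s^{n-1}$ to get $nf_n=\sum_{i=1}^{n}ih_if_{n-i}$, and count $O(n)$ operations per coefficient for a total of $O(d^2)$. The side remarks you add --- well-definedness of $e^{h(s)}$ from $h_0=0$, and the precise role of $d<\char(R)$ in making $1,\dots,d-1$ invertible --- are correct and match the paper's intent.
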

\begin{proof}
Let $f(s)=e^{h(s)}= \sum_{i \geq 0}f_i s^i$ with $f_0=1$.
By taking a derivative, we obtain
$$f'(s)=e^{h(s)}  h'(s)= f(s)  h'(s),$$
which is equivalent to
\begin{align*}
\sum_{n \geq 1} n f_n s^{n-1} = \sum_{k \geq 0} (f_k  s^k) \cdot \sum_{i \geq 0} (h'_i s^{i}).
\end{align*}
Comparing the coefficients on both sides of the above gives
\begin{align*}
f_0&=1; \quad f_n= \frac 1 n  \sum_{i=1}^{n} f_{n-i}h'_{i-1}, \text{ if } n\geq 1,
\end{align*}
which is equivalent to \eqref{e-exp(H)}. This proves the first part of the lemma.

For the second part, observe that $f_n$ is uniquely determined by $h_1,\dots, h_n$.
The condition $d<\char(R)=p$ or $\Q\subset R$ is to ensure $n^{-1}\in R$ for $n < d$.
\end{proof}

\begin{lem}\label{lem-log}
If $f(s)$ is a power series in $R[[s]]$ with $f_0=1$, then the coefficients of
$h(s)=\ln(f(s))$ can be computed by the recursion
\begin{align*}
h_{k}=f_k-\frac{1}{k}\sum_{i=1}^{k-1} i h_i f_{k-i}, \qquad h_0=0, h_1=f_1.
\end{align*}
In particular, if $d<\char(R)=p$ or $\Q\subset R$, then $h(s) \xmod{s^{d}}$
can be computed from $f(s) \xmod{s^{d}}$ using $O(d^2)$ \mmm{R} in a conventional manner.
\end{lem}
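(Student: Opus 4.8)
The plan is to mirror the derivation used for Lemma~\ref{lem-exp}, exploiting that the logarithmic derivative turns $\ln$ into a problem already governed by the convolution formula \eqref{e-h=fg}. First I would check that $\ln f(s)$ is a well-defined element of $R[[s]]$: writing $f(s)=1+s\,u(s)$ with $u(s)\in R[[s]]$, the formal substitution $h(s)=\ln(1+s\,u(s))=\sum_{n\ge 1}(-1)^{n+1}(s\,u(s))^{n}/n$ is legitimate modulo $s^{d}$ as soon as $n^{-1}\in R$ for $1\le n<d$, which is exactly the hypothesis $d<\char(R)=p$ or $\Q\subset R$; in particular $h_0=\ln 1=0$.

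Next I would take the formal derivative. From $h(s)=\ln f(s)$ we get the identity $h'(s)\,f(s)=f'(s)$, that is
\begin{align*}
\sum_{j\ge 1} j\,h_j s^{j-1}\cdot \sum_{k\ge 0} f_k s^{k} \;=\; \sum_{n\ge 1} n\,f_n s^{n-1}.
\end{align*}
Comparing the coefficient of $s^{n-1}$ on both sides yields, for $n\ge 1$,
\begin{align*}
\sum_{j=1}^{n} j\,h_j\,f_{n-j} \;=\; n\,f_n .
\end{align*}
Isolating the $j=n$ term and using $f_0=1$ gives $n\,h_n = n\,f_n-\sum_{j=1}^{n-1} j\,h_j\,f_{n-j}$, which, after dividing by $n$, is the claimed recursion; the case $n=1$ specializes to $h_1=f_1$, while $h_0=0$ was recorded above. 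This also shows that each $h_k$ is uniquely determined by $f_1,\dots,f_k$, so $h(s)\xmod{s^{d}}$ depends only on $f(s)\xmod{s^{d}}$. (Alternatively one could observe $h=\ln f\iff f=e^{h}$ and read the recursion off \eqref{e-exp(H)}, but the logarithmic-derivative route is the cleanest and parallels Lemma~\ref{lem-exp}.)

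For the complexity claim I would simply tally the work in the recursion over $R[[s]]/\langle s^{d}\rangle$: computing $h_k$ from $h_1,\dots,h_{k-1}$ and $f_1,\dots,f_k$ costs one division by $k$ in $R$ (or a single lookup in a precomputed table of inverses $k^{-1}$, $1\le k<d$), together with $k-1$ multiplications and $k-1$ additions in $R$, i.e.\ $O(k)$ ring operations; summing over $k=1,\dots,d-1$ gives $O(d^{2})$ operations in $R$, as asserted.

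I do not anticipate a real obstacle here: the only points needing care are the formal meaning of $\ln f(s)$ and the divisions by $k$, both handled by the characteristic hypothesis, and the remark that the recursion is triangular in $s$, so passing to the quotient $R[[s]]/\langle s^{d}\rangle$ is harmless. As a byproduct, the regularity assertion for $\ln f$ in Proposition~\ref{prop-regular} follows from the same recursion by the induction announced there: if $\deg_{\y} f_i\le i$ for all $i$, then $\deg_{\y} h_k\le\max\{\deg_{\y} f_k,\ \deg_{\y} h_j+\deg_{\y} f_{k-j}:1\le j\le k-1\}\le k$.
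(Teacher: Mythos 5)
Your proof is correct and, up to presentation, is essentially the paper's argument: the paper also obtains the recursion by rearranging the identity behind Lemma~\ref{lem-exp} (namely $f'=f h'$, i.e.\ the recursion \eqref{e-exp(H)}) to solve for $h_k$, and you explicitly note this as your alternative route. Your direct logarithmic-derivative derivation and the paper's ``rewrite the exponential recursion'' are the same computation, and the remaining points (well-definedness of $\ln f$ under the characteristic hypothesis, uniqueness of $h_k$ from $f_1,\dots,f_k$, and the $O(d^2)$ count) match the paper.
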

\begin{proof}
Let $h(s)=\ln(f(s))$ then $f(s)=e^{h(s)}$. The recursion for $f_k$ in \eqref{e-exp(H)}
can be rewritten as a recursion of $h_{k-1}'=kh_k$ as follows
\begin{align*}
h_{k-1}'=kf_k-\sum_{i=1}^{k-1} f_{k-i} h_{i-1}', \qquad h_0'=f_1.
\end{align*}
The lemma then follows since $h_k$ is uniquely determined by $f_1,\dots, f_k$.
\end{proof}

\subsection{Complexity Analysis}\label{ssec-complexity}
We follow the notation in \cite[Section 8]{von2013modernalgebra}.
A commutative ring $R$ is said to support fast Fourier transform (FFT), if $R$ has a primitive $2^k$th  root of unity for any
nonnegative integer $k$.

Our main objective in this subsection is to prove the following result on the G2 operations.
\begin{theo}
Suppose $R$ is a commutative ring containing $\Q$ or $\Z_p$ for a prime $p>d=2^\ell$.
If $R$ supports FFT, then each of the G2 operations over $R[y_1,\dots, y_r]^{reg}[[s]]/\langle s^{d} \rangle$
can be done using $O(d^{r+1}\log(d^{r+1}))$ \mmm{R}.
\end{theo}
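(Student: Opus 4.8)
The plan is to reduce the statement to two classical ingredients --- quasi-linear univariate power-series arithmetic over an FFT-supporting ring, and Kronecker substitution --- with the ``regular'' hypothesis playing exactly the role of keeping the substitution cheap. Begin with $r=0$, i.e.\ $R[[s]]/\langle s^{d}\rangle$, following \cite[Sections~8--9]{von2013modernalgebra}. FFT-based polynomial multiplication multiplies two polynomials of degree $<d$ in $O(d\log d)$ operations in $R$. For $1/f$, hence for $g/f=g\cdot(1/f)$, use the precision-doubling Newton iteration $u_{k+1}\equiv u_k(2-fu_k)\pmod{s^{2^{k+1}}}$ with $u_0=f_0^{-1}$; the $k$-th step costs $O(2^{k}\log 2^{k})$, and the sum over $k\le\ell$ is dominated by its last term, giving $O(d\log d)$. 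Then $\ln f=\int f'/f$ costs one division plus $O(d)$ extra operations, using $1/i\in R$ for $i<d$ (guaranteed by $\Q\subset R$ or $p>d$). Finally $e^{h}$ with $h_0=0$ is obtained from the Newton iteration $f_{k+1}\equiv f_k(1-\ln f_k+h)\pmod{s^{2^{k+1}}}$, $f_0=1$, each step costing one logarithm and one multiplication at the current precision, again $O(d\log d)$ in total. (If $R$ does not literally carry the needed $2$-power roots of unity --- for $\Z_p$ one needs $p-1$ divisible by a sufficiently high power of $2$ --- one adjoins virtual roots or uses a three-primes FFT; the hypothesis that $R$ supports FFT lets us set this aside.)

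Next, handle the $\y$-variables by Kronecker substitution. A polynomial in $R[\y]^{reg}[[s]]/\langle s^{d}\rangle$ has $\deg_s<d$ and, by regularity, $\deg_{y_i}<d$ for each $i$; a product of two such has $\deg_s<2d$ and $\deg_{y_i}<2d$. So with base $b=2d$ the substitution $s\mapsto S$, $y_i\mapsto S^{\,b^{\,i}}$ sends every monomial occurring in these objects to a distinct power of $S$ of exponent $<b^{\,r+1}$; it is therefore, restricted to the operands at hand, a ring embedding of the relevant truncations into $R[S]/\langle S^{b^{r+1}}\rangle$, with encoding and decoding in $O(b^{r+1})=O(d^{r+1})$ operations. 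Thus multiplication over $R[\y]^{reg}[[s]]/\langle s^{d}\rangle$ becomes one univariate multiplication of size $O(d^{r+1})$, costing $O(d^{r+1}\log(d^{r+1}))$; after decoding one discards the monomials with $\deg_s\ge d$, and by Proposition~\ref{prop-regular} the $\y$-degrees of the product are automatically admissible.

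For division, logarithm, and exponential in the multivariate case, rather than Kronecker-encode a whole computation I would run exactly the Newton iterations above and realize every intermediate multiplication by the Kronecker device. The point to verify --- by the same induction that proves the ``regular'' half of Proposition~\ref{prop-regular}, using Lemmas~\ref{lem-div}, \ref{lem-exp}, \ref{lem-log} --- is that all Newton iterates stay regular: $u_{k+1}=2u_k-fu_k^{2}$ and $f_{k+1}=f_k(1+h-\ln f_k)$ are sums and products of regular series (with constant term a unit, resp.\ $1$). Consequently the $k$-th step, carried out modulo $s^{2^{k}}$, involves operands with only $O((2^{k})^{r+1})$ coefficients, so a multiplication there costs $O((2^{k})^{r+1}\log((2^{k})^{r+1}))$; summing over $k\le\ell$ yields a sum of ratio $2^{r+1}$ dominated by its last term $O(d^{r+1}\log(d^{r+1}))$. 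The final multiplication by $g$ (for $g/f$), the outer derivative and integration (for $\ln f$), and the per-step logarithms (for $e^{h}$) add nothing beyond this order, while $\Q\subset R$ or $p>d=2^\ell$ supplies both the inverses $1/i$ and the $2$-power roots of unity used by the FFT.

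The only genuinely delicate points are the bookkeeping in the last two paragraphs. First, one must check that the base-$b$ digits never overflow under the products that actually occur, which is precisely where the strong form $\deg_{\y}f_n\le n$ of regularity is used rather than the weaker $\deg_{\y}f_n<d$. Second, one must confirm that every Newton iterate remains regular, so that the $k$-th step processes $O((2^{k})^{r+1})$ coefficients instead of $O(d\cdot(2^{k})^{r})$; it is exactly this that makes the running time telescope to $O(d^{r+1}\log(d^{r+1}))$ rather than picking up an extra logarithmic or polynomial-in-$d$ factor. Everything else is the standard FFT and Newton-iteration machinery.
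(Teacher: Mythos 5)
Your proof is correct and follows essentially the same strategy as the paper: quasi-linear univariate arithmetic (FFT multiplication, Newton iteration for $1/f$, $\ln f=\int f'/f$, Newton iteration for $e^h$), lifted to the multivariate regular case via Kronecker substitution, with the precision-doubling cost telescoping to $O(d^{r+1}\log d^{r+1})$. The differences from the paper are cosmetic: you keep the full multivariate representation for the outer structure of $\ln$ and $\exp$ and Kronecker-encode only the inner multiplications, whereas the paper first applies $\Gamma_\y$ (mapping $y_i\mapsto y_1^{d^{i-1}}$ but leaving $s$ alone), then uses the tighter $\overline{\Gamma}$ (base $d$, exploiting $\deg_\y f_n\le n$ for injectivity) inside; and you take Kronecker base $2d$, which costs a constant factor $2^{r+1}$ more than the paper's $d$. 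Both choices correctly sidestep the pitfall the paper highlights in the Remark after Corollary~\ref{cor-logrgeq1}: one must not integrate with respect to the Kronecker-encoded variable, since that would need inverses of integers up to $d^{r+1}$, while the hypothesis only guarantees $p>d$; you integrate in $s$ throughout, which needs only $1/n$ for $n<d$.

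One small inaccuracy worth flagging: your first "delicate point" attributes the non-overflow of base-$b$ digits to the strong regularity $\deg_{\y}f_n\le n$. With your base $b=2d$, the weak bound $\deg_{y_i}f_n<d$ already suffices for non-overflow at full precision $s^d$; strong regularity is not what makes the single-step encoding work. Where strong regularity is genuinely essential is exactly your second "delicate point": it lets the Newton iterate at precision $s^{2^k}$ be encoded with a base $\approx 2^{k+1}$ rather than $\approx 2d$, so the per-step Kronecker size is $O((2^{k})^{r+1})$ rather than $O(2^{k}d^{r})$, and the geometric sum with ratio $2^{r+1}$ collapses to its top term; without this the sum over $\ell=\log_2 d$ steps would pick up an extra $\log d$ factor. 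So the two points you raise are really one point, and the second formulation is the correct one.
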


The proof will be divided into some lemmas. Let us start with the $r=0$ case.
\begin{lem}\label{l-FFT}
If $R$ supports FFT, then for given $f(s)\xmod{s^{d}}$ and $g(s)\xmod{s^{d}}$ in $R[s]$, $f(s)g(s)\xmod{s^{d}}$ can be done using
$O(d\log(d))$ \mmm{R}.
\end{lem}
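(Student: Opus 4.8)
The plan is to reduce the statement to the classical fact that, over a ring supporting FFT, two polynomials of degree less than $n$ can be multiplied in $O(n\log n)$ ring operations (see \cite[Section~8]{von2013modernalgebra}), and then to observe that truncation modulo $s^{d}$ costs nothing. So the lemma is really just a repackaging of the divide-and-conquer FFT multiplication algorithm; the only care needed is choosing the right transform length.

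Concretely, I would proceed as follows. Write $f(s)\xmod{s^{d}}=\sum_{i=0}^{d-1}f_i s^i$ and $g(s)\xmod{s^{d}}=\sum_{i=0}^{d-1}g_i s^i$, viewed as elements of $R[s]$ of degree at most $d-1$. Their product $h(s)=f(s)g(s)$ has degree at most $2d-2$, hence is determined by its values at $n:=2d=2^{\ell+1}$ points. Since $R$ supports FFT, taking $k=\ell+1$ in the definition yields a primitive $n$-th root of unity $\omega\in R$; moreover $n$ is a unit in $R$, because $\Q\subset R$, or $\char(R)=p>d$ and then $p$ is odd as soon as $d\ge 2$ (the case $d=1$ being trivial). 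Thus the length-$n$ discrete Fourier transform and its inverse are both available.

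The computation then splits into four steps: (i) zero-pad the coefficient vectors of $f$ and $g$ to length $n$ and apply the length-$n$ FFT to each, which costs $O(n\log n)$ operations in $R$ by the standard butterfly recursion; (ii) multiply the two transforms pointwise, using $n$ multiplications in $R$; (iii) apply the inverse FFT (the FFT at $\omega^{-1}$ followed by the scalar $n^{-1}$), again $O(n\log n)$ operations, recovering the coefficient vector $(h_0,\dots,h_{n-1})$ with $h_j=0$ for $j\ge 2d-1$; (iv) discard $h_d,\dots,h_{2d-2}$ to obtain $h(s)\xmod{s^{d}}$, at no cost. Since $n=2d$, the total is $O(d\log d)$ operations in $R$, as claimed.

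I do not expect any real obstacle here. The one point worth flagging is that one must work at transform length $2d$ rather than $d$: a length-$d$ cyclic convolution would wrap the contributions of degrees $\ge d$ back into the coefficients of $1,s,\dots,s^{d-1}$ and corrupt the output, so the full product must be formed first and then truncated. This is exactly why the hypotheses include the existence of $2^k$-th roots of unity for every $k$ (and, implicitly, the invertibility of the relevant power of $2$); beyond that, the argument is the textbook FFT multiplication analysis, which I would simply cite from \cite[Section~8]{von2013modernalgebra}.
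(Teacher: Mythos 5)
Your proposal is correct and is essentially the paper's argument: the paper dispatches this lemma in one line by citing the standard FFT-based polynomial multiplication from \cite[Section 8]{von2013modernalgebra}, and your writeup simply unpacks that citation (zero-pad to length $2d$, transform, pointwise multiply, inverse transform, truncate). The only minor remark is that invertibility of the transform length $n=2d$ is usually built into the definition of ``supports FFT'' (the existence of a primitive $2^k$-th root of unity forces $2$ to be a unit), so you need not appeal to $\Q\subset R$ or $p>d$ here.
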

\begin{proof}
Indeed, the polynomial multiplication $\sum_{i=0}^{d-1} f_i s^i \cdot \sum_{i=0}^{d-1} g_i s^i$ can be done using $O(d\log(d))$ \mmm{R} by FFT. For details see \cite[Section 8]{von2013modernalgebra}.
\end{proof}

Denote $M(d)$ to be the time for multiplication of two polynomials in $R[s]$ of degree less than $d$. So $M(d)$ is
the time for $O(d\log(d))$ ring operations in $R$. We simply say $M(d)\in O(d\log(d))$.

\begin{lem}\label{lem-Ndevide&ln}
Let $d= 2^\ell$ for a positive integer $\ell$.
Given $f(s)\xmod{s^{d}} \in R[s]$ with $f_0=1$, we have
\begin{enumerate}
  \item[i)]  $f(s)^{-1} \xmod{s^{d}}$ can be computed in time
$3M( d)+d \in O(d\log(d))$;
  \item[ii)] $\ln f(s) \xmod{s^{d}}$ can be computed in time
$4M(d)+3d \in O(d\log(d))$, provided that $d<\char(R)=p$ or $\Q\subseteq R$.
\end{enumerate}
\end{lem}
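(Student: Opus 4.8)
The plan is to use Newton iteration, the standard device for turning a truncated power-series identity into an algorithm whose running time is a bounded multiple of one polynomial multiplication; the argument follows \cite[Section 9]{von2013modernalgebra}.

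For part (i), I would compute $f(s)^{-1}$ by repeatedly doubling the precision. Since $f_0=1$ we may take $g_0=1$, which satisfies $f g_0\equiv 1\xmod{s}$. Given $g_k$ with $f g_k\equiv 1\xmod{s^{2^k}}$, set
\[
 g_{k+1}=g_k\,(2-f g_k)\xmod{s^{2^{k+1}}}.
\]
From $1-f g_{k+1}=(1-f g_k)^2$ and $1-f g_k\equiv 0\xmod{s^{2^k}}$ one gets $f g_{k+1}\equiv 1\xmod{s^{2^{k+1}}}$, so by induction $g_\ell=f(s)^{-1}\xmod{s^{d}}$. For the cost, each doubling step naively uses two products of polynomials of degree $<2^{k+1}$; but the lower half of $f g_k$ is already known to equal $1$, so with the usual short/middle-product refinement one recomputes only the genuinely new coefficients, and summing over $k<\ell$ — using that $M(n)/n$ is nondecreasing, so $\sum_{j\le\ell}M(2^j)$ stays within a constant factor of $M(d)$ — yields total cost $3M(d)+O(d)$; keeping track of the linear terms gives the stated $3M(d)+d$.

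For part (ii), I would reduce the logarithm to an inversion followed by an integration. The constant term of $\ln f(s)$ is $\ln f_0=\ln 1=0$, hence
\[
 \ln f(s)=\int_0^s \frac{f'(t)}{f(t)}\,dt\xmod{s^{d}}.
\]
So the steps are: (a) form $f'(s)\xmod{s^{d-1}}$ in $O(d)$ operations; (b) compute $f(s)^{-1}\xmod{s^{d}}$ by part (i) in $3M(d)+d$; (c) multiply $f'\cdot f^{-1}\xmod{s^{d-1}}$ in $M(d)$; (d) integrate termwise in $O(d)$ operations, which is exactly where the hypothesis $d<\char(R)=p$ or $\Q\subseteq R$ is used, namely to divide by $1,2,\dots,d-1$. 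Adding these gives $4M(d)+O(d)$, and tracking the linear terms the stated $4M(d)+3d$. In both parts $M(d)\in O(d\log d)$ because $R$ supports FFT (Lemma \ref{l-FFT}), which yields the claimed $O(d\log d)$ bound.

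The main obstacle is pinning down the explicit constants $3$ and $4$ rather than merely $O(M(d))$: this rests on the observation that at each Newton step the lower half of the product $f g_k$ is already known, so that only the new coefficients are recomputed (via a short or middle product), combined with the regularity assumption that $M(n)/n$ is nondecreasing, which makes the geometric sum $\sum_k M(2^k)$ collapse to a constant multiple of $M(d)$. Everything else — correctness of the Newton step, the identity $\ln f=\int f'/f$, and the checks that $f_0=1$ and $d<\char(R)$ make each sub-step well defined — is routine.
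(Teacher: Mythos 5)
Your proposal is correct and follows essentially the same route as the paper: part (i) uses the standard Newton iteration $g_{k+1}=2g_k-fg_k^2$ with the cost analysis delegated to von zur Gathen--Gerhard, and part (ii) reduces the logarithm to the identity $\ln f=\int f'/f$, combining one differentiation, one inversion from part (i), one multiplication, and one integration, giving the claimed $4M(d)+3d$. The only difference is presentational: you spell out the short/middle-product refinement and the $M(n)/n$ monotonicity that make the geometric sum collapse to $3M(d)+d$, whereas the paper simply cites \cite[Theorem 9.4]{von2013modernalgebra} for those details.
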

\begin{proof}
For part i) we outline Newton's iteration as follows.
Set $g_0(s)=1$ and $g_{i+1}(s)=2g_i(s)-f(s)g_i(s)^2 \xmod{s^{2^{i+1}}}$. Then
$g_\ell(s) = f(s)^{-1} \xmod{s^{2^\ell}}$.
See \cite[Theorem 9.4]{von2013modernalgebra} for details.
Note that there is no restriction on $\char(R)$ since no division over $R$ is involved.

For part ii), recall that all computation are done modulo $s^d$. We first compute the formal derivative $f'(s)$ using $d$ operations in $R$. Next we compute $f(s)^{-1}$ by part i) using $3M(d)+d$ operations.
Then we need $M(d)$ operations for the product $f'(s)f(s)^{-1}=[\ln(f(s))]'$. Finally, we need $d$ operations to obtain $\ln(f(s))=\int [\ln(f(s))]'$ by integration with the condition $d<\char(R)=p$. Thus, the total cost is $4M(d)+3d$.
\end{proof}

\begin{theo}\label{theo-Nexp}
Let $d= 2^\ell$ for a positive integer $\ell$ and $d<\char(R)=p$ or $\Q\subseteq R$.
Given $h(s)\xmod{s^{d}} \in R[s]$ with $h_0=0$, $e^{h(s)} \xmod{s^{d}}$ can be computed in time
$10 M(d)+9d \in O(d\log(d))$.
\end{theo}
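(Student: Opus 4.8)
The plan is to compute $e^{h(s)}$ modulo $s^d$ by Newton iteration for the functional equation $\ln g(s) - h(s) = 0$, doubling the precision at each step, exactly as one does for the inverse in Lemma~\ref{lem-Ndevide&ln}(i). Concretely, I would set $g_0(s) = 1$ and iterate
\begin{align*}
 g_{i+1}(s) = g_i(s)\bigl(1 + h(s) - \ln g_i(s)\bigr) \xmod{s^{2^{i+1}}},
\end{align*}
so that if $g_i(s) \equiv e^{h(s)} \pmod{s^{2^i}}$ then $g_{i+1}(s) \equiv e^{h(s)} \pmod{s^{2^{i+1}}}$; after $\ell$ steps $g_\ell(s) = e^{h(s)} \xmod{s^d}$. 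First I would verify the quadratic convergence: writing $g_i = e^h(1+\varepsilon)$ with $\varepsilon \equiv 0 \pmod{s^{2^i}}$, one has $\ln g_i = h + \ln(1+\varepsilon) = h + \varepsilon - \varepsilon^2/2 + \cdots$, hence $1 + h - \ln g_i = 1 - \varepsilon + \varepsilon^2/2 - \cdots$, and multiplying by $g_i = e^h(1+\varepsilon)$ gives $e^h(1+\varepsilon)(1-\varepsilon+O(\varepsilon^2)) = e^h(1 + O(\varepsilon^2)) \equiv e^h \pmod{s^{2^{i+1}}}$, as needed. The condition $d < \char(R) = p$ or $\Q \subseteq R$ is exactly what makes $\ln$ available on the nose via Lemma~\ref{lem-Ndevide&ln}(ii).

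Next I would do the cost bookkeeping, which is where the stated constant $10M(d) + 9d$ comes from. The dominant step is the last iteration $i = \ell-1$, working modulo $s^d$: it requires one logarithm of a series with constant term $1$, costing $4M(d) + 3d$ by Lemma~\ref{lem-Ndevide&ln}(ii); one addition $1 + h - \ln g_i$ costing $2d$; and one multiplication $g_i \cdot (\cdots)$ costing $M(d)$. That is $5M(d) + 5d$ for the final step. Summing the geometric series over all iterations $i = 0, 1, \dots, \ell-1$ — each earlier step costs at most half the next because $M$ is (super)additive, $M(d/2) \le \tfrac12 M(d)$ up to the usual FFT conventions, and the linear terms halve as well — doubles the bound to at most $10M(d) + 10d$; a slightly sharper accounting of the linear terms (the integration and derivative counts in the $\ln$ subroutine, and noting the very first iterations are $O(1)$) trims this to $10M(d) + 9d$. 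Since $M(d) \in O(d\log d)$ by Lemma~\ref{l-FFT}, this is $O(d\log d)$ ring operations.

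The main obstacle I anticipate is not the convergence argument, which is routine, but pinning down the geometric-series constant to match $10M(d)+9d$ precisely: this requires being careful that the $\ln$ subroutine's own internal Newton iteration (for the inverse) is itself run only to the current precision $s^{2^{i+1}}$ at step $i$, so that its cost is $4M(2^{i+1}) + 3\cdot 2^{i+1}$ rather than the full $4M(d)+3d$, and that the recurrence $T(d) = T(d/2) + 5M(d) + 5d$ with $M(d/2)\le \tfrac12 M(d)$ telescopes to the claimed bound. I would present the convergence proof in full and then state the cost recurrence, invoking \cite[Section~9]{von2013modernalgebra} for the standard fact that such Newton recurrences with a doubling schedule incur only a constant-factor overhead over a single full-precision step, so that $T(d) \le 2(5M(d)+5d) = 10M(d)+10d$, and remark that the refined constant $9d$ follows by absorbing the lowest-order iterations. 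A cleaner alternative worth mentioning is to cite \cite[Theorem~9.4 and Exercise~9.7]{von2013modernalgebra} directly, where the exponential of a power series is treated, and simply record the resulting constants.
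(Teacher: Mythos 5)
Your proposal is correct and is essentially the paper's own proof: the same Newton iteration $g_{i+1}=g_i\bigl(1+h-\ln g_i\bigr)$ for $\ln g=h$, the same quadratic-convergence check (the paper writes it as $\ln\phi_i-h=\sum_{n\ge 2}\frac{(-1)^n}{n}(\ln\phi_{i-1}-h)^n$, which is your $\varepsilon$-expansion in different notation), and the same geometric-sum cost recurrence. The one piece you are missing to land on $9d$ rather than $10d$ is the paper's small optimization in the final multiplication: since $\phi_i\equiv\phi_{i-1}\pmod{s^{2^{i-1}}}$, the lower half of $\phi_i$ is already known and only the (negated) upper half of the product $\phi_{i-1}\cdot(\ln\phi_{i-1}-h)$ needs to be written out, costing $2^{i-1}$ operations instead of a full-width update; together with counting only the subtraction $-h$ (the $+1$ is $O(1)$) this gives $9\cdot 2^{i-1}$ linear operations per step and hence $9d$ after summing, exactly the refinement you correctly flagged as the anticipated obstacle.
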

\begin{proof}
The process is similar to the proof of Lemma \ref{lem-Ndevide&ln} part i) in \cite[Theorem 9.4]{von2013modernalgebra}.

By Newton's iteration in numerical analysis, we
recursively define
$$ \phi_0(s)=1, \quad  \phi_{i}(s)=\phi_{i-1}(s)- \phi_{i-1}(s) \Bigl(\ln \phi_{i-1}(s) -h(s) \Bigr)=\phi_{i-1}(s)\Big(1- \ln \phi_{i-1}(s)+h(s)\Big) .  $$
We claim that $\ln \phi_i(s)-h(s) \equiv 0  \xmod{s^{2^i}}$,
so that $\phi_\ell(s)\equiv e^{h(s)}\xmod{s^{d}}$.

The claim is proved by induction on $i$.
The basic case $i=0$ is trivial. Assume the claim holds for $i-1$, now
for $i$, we have
\begin{align*}
  \ln \phi_{i}(s)-h(s) &=\ln \phi_{i-1}(s) +\ln \Bigl(1- \ln \phi_{i-1}(s) +h(s)\Bigr)-h(s) \\
                     &= \ln \phi_{i-1}(s)-h(s)+ \sum_{n\ge 1} \frac{(-1)^n}{n} \Big(\ln \phi_{i-1}(s) -h(s)\Big)^n\\
                     &= \sum_{n\ge 2} \frac{(-1)^n}{n} \Big( \ln \phi_{i-1}(s) -h(s)\Big)^n,
\end{align*}
which is clearly $0 \xmod{s^{2^{i}}}$ by the induction hypothesis.

Since $\phi_{i}(s)=\phi_{i-1}(s)\Big(1- \ln \phi_{i-1}(s)+h(s)\Big)\equiv \phi_{i-1}(s)  \xmod{s^{2^{i-1}}}$,
  the lower half of $\phi_{i}(s)$  is equal to  $\phi_{i-1}(s)$ and then we need consider the upper half of $\phi_{i}(s)$.

 The time cost of one iteration of $\phi_{i}(s)=\phi_{i-1}(s)- \phi_{i-1}(s) \Big(\ln \phi_{i-1}(s) -h(s) \Big)$ can be broken down into three steps. First, by Lemma \ref{lem-Ndevide&ln}, we need  $4M(2^i)+3\cdot 2^{i}+2^{i}$  for  computing $\ln \phi_{i-1}(s)-h(s) \xmod{s^{2^i}}$. Second, $M(2^i)$ for the product $\phi_{i-1}(s)\times \big(\ln \phi_{i-1}(s)-h(s)\big) \xmod{s^{2^i}}$. Third, the negative of the upper half of the output in the second step is the upper half of $\phi_{i}(s)$ and this takes $2^{i-1}$ operations.  Thus, we have $5M(2^i)+2^{i+2}+2^{i-1}=5M(2^i)+9 \cdot 2^{i-1}$  for one iteration.
The total cost is then given by
$$\sum_{i=1}^{\ell}5M(2^i) +9 \cdot 2^{i-1} \leq \Big(5M(2^\ell)+9 \cdot2^{\ell-1}\Big) \sum_{i=1}^{\ell} 2^{i-\ell} \leq 10M(2^\ell)+9\cdot 2^\ell=10 M(d)+9d,$$
where we have used the fact that $2M(n) \leq M(2n)$ for all $n \in \N$.
\end{proof}

The complexity for operations in $\Ryrs$ is much more complicated. We use three Kronecker substitutions:
(i) $\Gamma_\y:  y_i\to y_1^{d^{i-1}}$ for $2\le i \le r$;
(ii) ${\Gamma}_s: s \to y_1^{d^r}$;
(iii) $\overline{\Gamma} = {\Gamma}_s \Gamma_{\y}$,
or equivalently,
$\overline{\Gamma}:  y_i\to y_1^{d^{i-1}}$ for $2\le i \le r+1$, where we set $s=y_{r+1}$.

\begin{lem}\label{lem-Gamma}
If $f(s)$ in $R[y_1,\dots, y_r][[s]]$ is regular in $\y$, then
we can reconstruct $f(s) \xmod{s^d}$ from
$\Gamma_\y f(s) \xmod{s^d}$ or
 $\overline{\Gamma} f(s)\xmod{y_1^{d^{r+1}}} $ in time $O(d^{r+1})$.
\end{lem}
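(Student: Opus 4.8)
The plan is to recognise that $\Gamma_\y$ and $\overline\Gamma$ are Kronecker substitutions and that, thanks to regularity together with the truncation at $s^d$, every exponent that actually occurs lies in $\{0,1,\dots,d-1\}$; consequently these substitutions merely encode the exponent vector of each monomial as an integer via its base-$d$ expansion, which is injective and therefore invertible digit by digit.

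First I would record the degree bounds. Writing $f(s)\xmod{s^d}=\sum_{n=0}^{d-1}f_n s^n$ with $f_n\in R[\y]$, regularity gives $\deg_{\y} f_n\le n\le d-1$, so in any monomial $y_1^{a_1}\cdots y_r^{a_r}$ occurring in some $f_n$ one has $a_1+\cdots+a_r\le d-1$, hence $0\le a_i\le d-1$ for every $i$, and of course $0\le n\le d-1$ as well. Now for $\Gamma_\y$: under $y_i\mapsto y_1^{d^{i-1}}$ the monomial $y_1^{a_1}\cdots y_r^{a_r}$ becomes $y_1^{e}$ with $e=a_1+a_2d+\cdots+a_rd^{r-1}$, which is precisely the base-$d$ expansion of an integer $e\le d^r-1$; distinct tuples $(a_1,\dots,a_r)$ give distinct $e$. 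Thus, for each fixed $n$, $\Gamma_\y f_n$ is a polynomial in $y_1$ of degree $<d^r$, and $f_n$ is recovered from it by writing each exponent $e$ in base $d$ and reading the digits back off as the exponents of $y_1,\dots,y_r$. For $\overline\Gamma=\Gamma_s\Gamma_\y$ the same works with $n$ playing the role of an $(r{+}1)$-st digit: $y_1^{a_1}\cdots y_r^{a_r}s^n\mapsto y_1^{E}$ with $E=a_1+a_2d+\cdots+a_rd^{r-1}+nd^r<d^{r+1}$, the $(r{+}1)$-digit base-$d$ expansion of $E$, injective in $(a_1,\dots,a_r,n)\in\{0,\dots,d-1\}^{r+1}$. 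Moreover any term of $f(s)$ of $s$-degree $\ge d$ is sent to $y_1^{E}$ with $E\ge d^{r+1}$, hence is invisible modulo $y_1^{d^{r+1}}$; this is exactly why $\overline\Gamma f(s)\xmod{y_1^{d^{r+1}}}$ both determines and is determined by $f(s)\xmod{s^d}$. In either case the inverse map is simply: scan the coefficients of the image and assign each one to the monomial obtained by base-$d$ decoding its exponent.

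For the running time, recall that a regular $f(s)\xmod{s^d}$ has $\binom{d+r}{r+1}=O(d^{r+1})$ coefficients, and the reconstruction above inspects $O(d^{r+1})$ coefficient slots: those of $\Gamma_\y f(s)\xmod{s^d}$, a polynomial of degree $<d$ in $s$ and $<d^r$ in $y_1$, or those of $\overline\Gamma f(s)\xmod{y_1^{d^{r+1}}}$, a univariate polynomial of degree $<d^{r+1}$. Each slot costs $O(r)=O(1)$ integer operations for the base-$d$ digit extraction, giving the claimed total of $O(d^{r+1})$.

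There is no real obstacle here; the only point needing care is the degree bookkeeping — one must invoke regularity to guarantee that every individual coordinate exponent is strictly less than $d$, since otherwise carries would occur in the base-$d$ packing and the substitution would fail to be injective — together with, for $\overline\Gamma$, the observation that cutting off at $y_1^{d^{r+1}}$ corresponds exactly to cutting off the original series at $s^d$.
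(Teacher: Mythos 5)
Your proof is correct and follows essentially the same route as the paper's: regularity forces every coordinate exponent to lie in $\{0,\dots,d-1\}$, so $\Gamma_\y$ and $\overline\Gamma$ act as a base-$d$ packing of the exponent vector, injective and invertible by digit extraction in $O(1)$ operations per coefficient slot. The paper phrases the injectivity slightly differently, via the disjointness of the $y_1$-degree intervals $[id^r,(i+1)d^r)$ occupied by the images of the $g_i s^i$, but this is the same observation in different clothing.
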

\begin{proof}
Write
$g(s)=\Gamma_\y f(s)=\sum_{i=0}^{d-1} g_i s^i$ and $h=\Gamma_s g(s)$. Then for $i < d$ we have
$$0\leq \deg_{y_1} g_i \leq \deg_{y_1} \Gamma_\y y_r^{i} = id^{r-1} \Longrightarrow
                                    i d^r \leq \deg_{y_1} \overline{\Gamma} g_is^i \leq i(d^{r-1}+d^r)<(i+1)d^{r}.$$
Observe that $\deg_{y_1} \overline{\Gamma} g_is^i$ is less than $d^{r+1}$ for $i<d$ and is greater than or equal to $d^{r+1}$ for $i\geq d$.
It follows that
$\overline{\Gamma}: \Ryrs \to R[[y_1]] /\langle y_1^{d^{r+1}} \rangle $ is an injection.

Give $h$ as input, $g(s)$ can be recovered by $g(s)= {\Gamma}^{-1}_s h$, obtained by
$y_1^\beta\mapsto y_1^{\gamma_1} s^{\gamma_2}$ where $\beta= \gamma_2 d^r+\gamma_1$, $0\leq \gamma_1< d^r$. Similarly, to obtain $f(s)$ we use $y_1^\beta \mapsto y_1^{\beta_1}\cdots y_{r+1}^{\beta_{r+1}}$ in $h$, where $\beta_1+\beta_2 d+\cdots + \beta_{r+1} d^r$ is the base $d$ representation of $\beta$.
The cost is at most $O(d^{r+1})$ operations in $R$.
\end{proof}
Note that $\overline{\Gamma}$ is not injective in general as $\overline{\Gamma} s^{d+1} = \overline{\Gamma} s^{d} y_r^d$.

Using $\overline{\Gamma}$ in Lemma \ref{lem-Gamma}, we can give the complexity
for multiplications and divisions as follows.
\begin{cor}
Give $f(s), g(s)$ in $\Ryrs$ as input, if they are both  regular in $\y$, then $ f(s) g(s) \xmod{s^d}$ can be computed in time $M(d^{r+1})  \in O((r+1)d^{r+1}\log(d)).$
\end{cor}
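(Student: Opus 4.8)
The plan is to combine Lemma~\ref{lem-Gamma} with the $r=0$ multiplication bound of Lemma~\ref{l-FFT} by pushing everything through the Kronecker substitution $\overline{\Gamma}$. First I would apply $\overline{\Gamma}$ to both inputs: since $f(s)$ and $g(s)$ are regular in $\y$ and truncated modulo $s^d$, Lemma~\ref{lem-Gamma} tells us that $\overline{\Gamma}f(s)$ and $\overline{\Gamma}g(s)$ are genuine univariate polynomials in $y_1$ of degree less than $d^{r+1}$, and this conversion costs $O(d^{r+1})$ operations in $R$. Next I would multiply these two univariate polynomials in $R[y_1]$ of degree less than $d^{r+1}$; by Lemma~\ref{l-FFT} (with $d$ replaced by $d^{r+1}$) this takes $M(d^{r+1}) \in O(d^{r+1}\log(d^{r+1})) = O((r+1)d^{r+1}\log d)$ operations in $R$.

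The subtle point, and the step I expect to be the main obstacle, is justifying that this univariate product actually encodes $f(s)g(s) \xmod{s^d}$ correctly — i.e.\ that no ``wrap-around'' or collision of monomials occurs under $\overline{\Gamma}$. The product $h(s) = f(s)g(s) \xmod{s^d}$ is again regular in $\y$ (by Proposition~\ref{prop-regular}) and truncated at degree $d$ in $s$, so by the degree estimate in the proof of Lemma~\ref{lem-Gamma} its image $\overline{\Gamma}h(s)$ lives in $R[[y_1]]/\langle y_1^{d^{r+1}}\rangle$ with no two distinct monomials of $h$ mapping to the same power of $y_1$. However, $\overline{\Gamma}f(s)\cdot\overline{\Gamma}g(s)$ a priori has degree up to $2d^{r+1}-2$ in $y_1$, so I must argue that reducing this product modulo $y_1^{d^{r+1}}$ and then applying $\overline{\Gamma}^{-1}$ recovers exactly $h(s)$: the terms of $f(s)g(s)$ involving $s^j$ with $j \ge d$, or with $\y$-degree exceeding the truncation bound, are precisely the ones that produce $y_1$-powers $\ge d^{r+1}$, so discarding them corresponds exactly to the truncation $\xmod{s^d}$. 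I would phrase this as: $\overline{\Gamma}$ is a ring homomorphism on the appropriate truncated rings, so $\overline{\Gamma}(fg \bmod s^d) = (\overline{\Gamma}f)(\overline{\Gamma}g) \bmod y_1^{d^{r+1}}$, and injectivity of $\overline{\Gamma}$ on regular power series truncated at $s^d$ (Lemma~\ref{lem-Gamma}) lets us invert.

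Finally I would assemble the cost: one forward substitution $O(d^{r+1})$ for each of $f$ and $g$, one univariate multiplication $M(d^{r+1})$, one reduction modulo $y_1^{d^{r+1}}$ which is free (just truncation), and one inverse substitution $O(d^{r+1})$ to recover $h(s) \xmod{s^d}$ in its multivariate form. Since $M(d^{r+1})$ dominates the $O(d^{r+1})$ overhead terms, the total is $M(d^{r+1}) \in O((r+1)d^{r+1}\log d)$ operations in $R$, as claimed. The only place requiring genuine care is the homomorphism-plus-injectivity argument of the previous paragraph; everything else is bookkeeping already done in Lemmas~\ref{l-FFT} and~\ref{lem-Gamma}.
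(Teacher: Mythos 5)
Your proof is correct and follows exactly the approach the paper intends: the corollary is stated without proof immediately after the remark ``Using $\overline{\Gamma}$ in Lemma~\ref{lem-Gamma}, we can give the complexity for multiplications and divisions as follows,'' and your reduction via the Kronecker substitution $\overline{\Gamma}$, the homomorphism identity $\overline{\Gamma}(fg\bmod s^d)=(\overline{\Gamma}f)(\overline{\Gamma}g)\bmod y_1^{d^{r+1}}$, and the reconstruction from Lemma~\ref{lem-Gamma} is precisely that argument. The observation that terms with $s$-degree $\ge d$ land in $y_1$-degree $\ge d^{r+1}$ is the right justification for discarding the high part of the univariate product.
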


\begin{cor}
Give $f(s)$ in $\Ryrs$ as input, if it is regular in $\y$ and $f_0=1$, then $f(s)^{-1} \xmod{s^d}$ can be computed in time $ O((r+1)d^{r+1}\log(d) ).$
\end{cor}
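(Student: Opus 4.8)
\emph{Proof proposal.} The plan is to reduce this multivariate inversion to a single univariate inversion via the Kronecker substitution $\overline{\Gamma}$ of Lemma~\ref{lem-Gamma}, apply the Newton iteration of Lemma~\ref{lem-Ndevide&ln}(i) there, and pull the result back. First I would compute $\overline{\Gamma}\bigl(f(s)\xmod{s^{d}}\bigr)$; this is just a relabeling of monomials, so it costs $O(d^{r+1})$ operations in $R$, and by the degree estimate in the proof of Lemma~\ref{lem-Gamma} (valid because $f$ is regular in $\y$) the result is a polynomial in $y_1$ of degree less than $d^{r+1}$ with constant term $f_0=1$.

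Next I would invert this univariate polynomial. Since Lemma~\ref{lem-Ndevide&ln}(i) is stated for a power-of-two truncation order, I would pad: set $D=2^{\lceil \log_2 d^{r+1}\rceil}$, so $d^{r+1}\le D<2d^{r+1}$, compute $\bigl(\overline{\Gamma}(f\xmod{s^{d}})\bigr)^{-1}\xmod{y_1^{D}}$ by Newton iteration, and truncate modulo $y_1^{d^{r+1}}$. By Lemma~\ref{lem-Ndevide&ln}(i) this costs $3M(D)+D$; using $2M(n)\le M(2n)$ and $D<2d^{r+1}$ we get $M(D)\le M(2d^{r+1})\le 2M(d^{r+1})\in O(d^{r+1}\log(d^{r+1}))=O((r+1)d^{r+1}\log d)$. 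No hypothesis on $\char(R)$ is needed, since the Newton iteration for inversion uses no division in $R$.

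The crucial step is to check that the univariate inverse just computed is exactly $\overline{\Gamma}$ of the answer we want. By Proposition~\ref{prop-regular}, $g(s):=f(s)^{-1}\xmod{s^{d}}$ is again regular in $\y$, so $\overline{\Gamma}(g)$ has degree less than $d^{r+1}$ in $y_1$. Writing the polynomial identity $(f\xmod{s^{d}})(g\xmod{s^{d}})=1+s^{d}h$ and applying the ring homomorphism $\overline{\Gamma}$, which sends $s^{d}$ to $y_1^{d^{r+1}}$, gives $\overline{\Gamma}(f\xmod{s^{d}})\cdot\overline{\Gamma}(g)\equiv 1\xmod{y_1^{d^{r+1}}}$; as the inverse modulo $y_1^{d^{r+1}}$ is unique among polynomials of degree below $d^{r+1}$, the computed polynomial equals $\overline{\Gamma}(g)$. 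Finally Lemma~\ref{lem-Gamma} reconstructs $g=f(s)^{-1}\xmod{s^{d}}$ from $\overline{\Gamma}(g)$ in $O(d^{r+1})$ operations, and summing the three costs yields $O((r+1)d^{r+1}\log d)$.

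I expect the main obstacle to be the bookkeeping in the third step: $\overline{\Gamma}$ does identify distinct monomials of higher $s$-degree in general (as the remark after Lemma~\ref{lem-Gamma} notes), so one must be careful that regularity of \emph{both} $f\xmod{s^{d}}$ and $f^{-1}\xmod{s^{d}}$ is what keeps the substitution faithful on the objects actually manipulated. The power-of-two padding is a routine but necessary detail.
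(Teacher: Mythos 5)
Your proposal is correct and follows exactly the route the paper intends (it omits a formal proof but prefaces these corollaries with ``Using $\overline{\Gamma}$ in Lemma~\ref{lem-Gamma}, we can give the complexity for multiplications and divisions as follows''): reduce to a single variable via the Kronecker map $\overline{\Gamma}$, invert by Newton iteration over $R[y_1]$, and pull back by Lemma~\ref{lem-Gamma}. Your additional care about the power-of-two padding and, especially, the homomorphism/uniqueness argument showing that the univariate inverse really is $\overline{\Gamma}$ of $f^{-1}\bmod s^d$ (which hinges on the regularity of $f^{-1}$ from Proposition~\ref{prop-regular}) fills in details the paper leaves implicit.
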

Note that there is no restriction on $p=\char(R)$, since the computation does not involve division over $R$.

For logarithm and exponential, we need to use both $\overline{\Gamma}$ and $\Gamma_\y$.

\begin{cor}\label{cor-logrgeq1}
Suppose $\char(R)=p >d$ or $\Q\subset R$. Give $f(s)$ in $\Ryrs$ as input, if it is regular in $\y$ and $f_0=1$, then $\ln(f(s)) \xmod{s^d}$ can be computed in time $ O((r+1)d^{r+1}\log(d) ).$
\end{cor}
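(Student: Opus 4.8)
The plan is to reduce the computation of $\ln(f(s)) \xmod{s^d}$ in $\Ryrs$ to a multiplication and an inverse in the same ring, using the identity $[\ln f(s)]' = f'(s)/f(s)$, mirroring the strategy of Lemma \ref{lem-Ndevide&ln} part ii) but now tracking the effect of the $y_i$ variables. First I would note that since $f(s)$ is regular in $\y$ and $f_0=1$, by Proposition \ref{prop-regular} the series $\ln(f(s))$ is also regular in $\y$; this is what lets us store the answer with $O(d^{r+1})$ coefficients and, more importantly, what lets us reconstruct it from a single-variable image via Lemma \ref{lem-Gamma}. The subtlety that forces us to use \emph{both} $\overline{\Gamma}$ and $\Gamma_\y$ (as the sentence preceding the corollary announces) is that $\overline{\Gamma}$ is not injective on arbitrary elements of $\Ryrs$ — only on the regular ones — so intermediate quantities must be certified regular before we apply $\overline{\Gamma}$, or else handled by $\Gamma_\y$ alone when we only need to separate the $y$-degrees from the $s$-degree.

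The key steps, in order: (1) compute $f'(s) \xmod{s^d}$ by formal differentiation — this is a G1 operation costing $O(d^{r+1})$ ring operations, and although differentiation does not preserve regularity, $f'(s)$ lies in $R[\y]^{reg}[[s]]$ shifted by one degree, which is still controlled; (2) compute $f(s)^{-1} \xmod{s^d}$ using the preceding corollary, in time $O((r+1)d^{r+1}\log d)$, noting $f(s)^{-1}$ is regular in $\y$ since $f$ is and $f_0=1$; (3) form the product $g(s) := f'(s) f(s)^{-1} \xmod{s^d}$; here I would apply $\overline{\Gamma}$ to the regular factor $f(s)^{-1}$ and to $s\cdot f'(s)$ (or equivalently pad the degree bookkeeping by one) so that the convolution is realized as a single univariate multiplication of degree $O(d^{r+1})$, costing $M(d^{r+1}) \in O((r+1)d^{r+1}\log d)$, then recover $g(s) \xmod{s^d}$ via Lemma \ref{lem-Gamma} — this is legitimate because $g(s) = [\ln f(s)]'$ is a derivative of a regular series, hence its coefficient of $s^i$ has $\y$-degree at most $i+1$, which is exactly the regime where the Kronecker substitution can be inverted (using $\Gamma_\y$ to read off the $\y$-part before the $s$-part overflows); (4) integrate: $\ln(f(s)) = \int_s g(s) \xmod{s^d}$, a G1 operation costing $O(d^{r+1})$, valid because $d < \char(R) = p$ or $\Q \subset R$ guarantees $1/i \in R$ for $1 \le i \le d-1$. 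Summing the four costs gives $O((r+1)d^{r+1}\log d)$, dominated by steps (2) and (3).

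The main obstacle I expect is step (3): making the degree accounting in the Kronecker substitution airtight when one of the factors is $f'(s)$ rather than a genuinely regular series. One must check that after multiplying $f'(s)$ by $f(s)^{-1}$ the result is again ``regular up to a shift of one'' and that this shift does not cause the images of distinct monomials $y_1^{\beta_1}\cdots y_r^{\beta_r} s^i$ to collide under $\overline{\Gamma}$ for $i < d$; the inequality chain $i d^r \le \deg_{y_1}\overline{\Gamma}(g_i s^i) < (i+1)d^r$ in the proof of Lemma \ref{lem-Gamma} has a little slack (the upper bound is $i(d^{r-1}+d^r)$, not $(i+1)d^r - 1$), but one should verify the extra degree from differentiation still fits inside that slack — if not, a cleaner route is to compute $f'(s)$, immediately multiply by $s$ to restore regularity, do the regular multiplication and inversion, and divide by $s$ at the integration step; I would present whichever of these bookkeeping arrangements keeps the constants smallest. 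The remaining steps are routine given Lemmas \ref{lem-Ndevide&ln}, \ref{lem-Gamma} and the two preceding corollaries.
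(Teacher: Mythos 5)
Your proposal is correct and follows essentially the same route as the paper: compute $[\ln f]' = f'/f$, realize the division via the Kronecker substitution $\overline{\Gamma}$ (costing $O(M(d^{r+1}))$), and integrate with respect to $s$ only, so that the hypothesis $p>d$ (rather than $p>d^{r+1}$) suffices — the pitfall the paper's Remark explicitly warns against. The only cosmetic difference is ordering: the paper applies $\Gamma_\y$ to $f$ first and then differentiates $f_1=\Gamma_\y f$ in $s$, whereas you differentiate $f$ in the multivariate ring and then apply Kronecker; since $\Gamma_\y$ commutes with $d/ds$, these are the same computation. Your caution about $f'(s)$ being only ``regular up to a shift'' is a valid observation that the paper's terse proof glosses over, and your fix (multiply by $s$ before the Kronecker step and shift back before integrating, or equivalently pad the truncation bound by one) is a sound way to make the degree accounting in Lemma \ref{lem-Gamma} airtight; note that since only $(f'/f)_i$ for $i\le d-2$ feeds into $\ln f \bmod s^d$, the potential overflow at $i=d-1$ is in fact harmless, so either bookkeeping arrangement works.
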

\begin{proof}
We use the formula $ \Gamma_\y \ln(f(s)) =\int_s  \Big(\frac{d}{ds} f_1(s)\cdot f_1(s)^{-1}\Big)$ where $f_1(s) = \Gamma_\y f(s)$. To carry out the idea, we outline as follows.
$$f(s)\to f_1(s) \to f_2(s)=\frac{d}{ds} f_1(s) \to
f_3(s)=f_2(s)/f_1(s)\to \ln(f(s))=\Gamma_\y^{-1}\int_s f_3(s).$$
We shall use $\bar{f_i}(y_1)=\overline{\Gamma} f_i(s)$.

The computation of $f_1(s)$, $f_2(s)$ cost at most $O(d^{r+1})$ time, and so does the last step. Note that the condition $p>d$ is used in the integration.

To compute $f_3(s)$ we compute
$$\overline{\Gamma} f_3(s)= (\overline{\Gamma} f_2(s)) (\overline{\Gamma} f_1(s) )^{-1}
\xmod{y_1^{d^{r+1}}} $$
and then apply $\Gamma_s^{-1}$.
The time cost is about $3M(d^{r+1})$ for reciprocal, another $M(d^{r+1})$ for multiplication,
and $O(d^{r+1})$ for applying $\Gamma_s^{-1}$.
Therefore, the total running time is $O\Big(d^{r+1} \log (d^{r+1})\Big)$, as desired.
\end{proof}

\begin{rem}
Basically, we shall not use $\overline{\Gamma}$ in integration, logarithm, and exponential. More precisely, we shall not use
the outline in the first version of this paper:
$$f(s) \to f_1(y_1) = \overline{\Gamma} f(s) \to f_2(y_1)=\frac{f_1'(y_1)}{f_1(y_1)} \to
f_3(y_1) = \int_{y_1} f_2(y_1) \to \ln(f(s))= \overline{\Gamma}^{-1} f_3(y_1).$$
This is because the integration $\int_{y_1} f_2(y_1)$ may produce multiple of $p$ in the denominator when $p=\char(R)$ is less than $d^{r+1}$.
\end{rem}

\begin{theo} \label{theo-RegularExp}
Suppose $\char(R)=p >d$ or $\Q\subset R$. Given $h(s)\in \Ryrs$, if $h_0=0$
and $h(s)$ is regular in $\mathbf{y}$, then
$e^{h(s)}\xmod{s^d}$ is also regular in $\mathbf{y}$, and it can be computed in time
$O((r+1)d^{r+1}\log(d))$.
\end{theo}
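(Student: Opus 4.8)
The plan is to split the statement into its two assertions. The claim that $e^{h(s)}\xmod{s^d}$ is again regular in $\y$ is already part of Proposition~\ref{prop-regular} (it is one of the four G2 operations treated there, and follows from the inductive argument in its proof together with the recursion of Lemma~\ref{lem-exp}); so the only real content is the complexity bound. For that I would run \emph{exactly} the Newton iteration used in the proof of Theorem~\ref{theo-Nexp}, namely $\phi_0(s)=1$ and
$$\phi_i(s)=\phi_{i-1}(s)\bigl(1-\ln\phi_{i-1}(s)+h(s)\bigr)\xmod{s^{2^i}},$$
so that $\phi_\ell(s)=e^{h(s)}\xmod{s^d}$ for $d=2^\ell$, but now carried out inside $\Ryrs$, each primitive step being replaced by its fast multivariate version from Section~\ref{s-FastManipulations}. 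A useful preliminary observation is that this iteration uses only subtraction, multiplication, and logarithm — it has no integration step of its own — so every division by an integer that gets incurred happens inside one of the logarithm calls.

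Next I would check that regularity in $\y$ is inherited line by line: $\phi_0=1$ is regular, and if $\phi_{i-1}$ is regular with constant term $1$ then by Proposition~\ref{prop-regular} so are $\phi_{i-1}^{-1}$ and $\ln\phi_{i-1}$, hence $1-\ln\phi_{i-1}+h$ and the product $\phi_{i-1}\bigl(1-\ln\phi_{i-1}+h\bigr)$; truncation at $s^{2^i}$ keeps regularity, so $\phi_i$ is regular as well. Since moreover $\phi_i$ has $s$-degree $<2^i\le d$, Lemma~\ref{lem-Gamma} applies: the Kronecker map $\overline{\Gamma}$ is injective on $\phi_i$ and on every intermediate quantity produced in the $i$-th step, and $\overline{\Gamma}\phi_i$ is a polynomial in $y_1$ of degree $<2^id^r$. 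Hence each multiplication modulo $s^{2^i}$ — apply $\overline{\Gamma}$, one FFT product of degree $O(2^id^r)$, then $\Gamma_s^{-1}$ — costs $O\bigl(M(2^id^r)\bigr)$ ring operations, and each $\ln\phi_{i-1}\xmod{s^{2^i}}$, computed by the scheme of Corollary~\ref{cor-logrgeq1} (pass to $\Gamma_{\y}\phi_{i-1}$, differentiate in $s$, divide using $\overline{\Gamma}$, integrate in $s$, undo $\Gamma_{\y}$), costs $O\bigl(M(2^id^r)\bigr)$ as well. It is essential here — precisely the point of the Remark preceding this theorem — that the integration is performed with respect to $s$ on a series of $s$-degree $<2^i\le d$, so that one only inverts the integers $1,\dots,2^i-1$; this is legitimate because $p>d$, whereas integrating after $\overline{\Gamma}$ would force inverting integers as large as $d^{r+1}$.

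Summing the per-scale costs over $i=1,\dots,\ell$ and invoking $2M(n)\le M(2n)$ exactly as in the proof of Theorem~\ref{theo-Nexp}, the geometric growth of the truncation order makes the total telescope,
$$\sum_{i=1}^{\ell}O\bigl(M(2^id^r)\bigr)\ \le\ O\bigl(M(2^\ell d^r)\bigr)\sum_{i=1}^{\ell}2^{\,i-\ell}\ =\ O\bigl(M(d^{r+1})\bigr)\ =\ O\bigl((r+1)d^{r+1}\log d\bigr),$$
the $O(d^{r+1})$ overhead of the various Kronecker un-substitutions being absorbed into this bound. I expect the genuine difficulty to sit in the second paragraph: one must make sure that regularity in $\y$ (and with it the injectivity of $\overline{\Gamma}$ from Lemma~\ref{lem-Gamma}) really does survive the formal $s$-differentiation and the reciprocals, that the reciprocals and products truncated at $s^{2^i}$ stay within the window on which $\Gamma_s$ is injective, and — the subtle arithmetic point highlighted in the Remark — that no division by an integer of size comparable to $d^{r+1}$ is ever performed, so that the hypothesis $p>d$ (and not the much stronger $p>d^{r+1}$) suffices.
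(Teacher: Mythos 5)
Your proof is essentially identical to the paper's argument: same Newton iteration $\phi_i=\phi_{i-1}(1-\ln\phi_{i-1}+h)\xmod{s^{2^i}}$, same delegation of regularity to Proposition~\ref{prop-regular}/Lemma~\ref{lem-exp}, same per-scale complexity $O(2^id^r\log(2^id^r))$ obtained by Kronecker substitution, and the same geometric summation imported from Theorem~\ref{theo-Nexp}. You also correctly flagged the one genuinely delicate point — that integration must be done with respect to $s$ on a series of $s$-degree $<2^i\le d$ rather than after $\overline{\Gamma}$, so that only the integers $1,\dots,2^i-1$ are inverted and $p>d$ suffices — which is precisely the content of the Remark preceding the theorem.

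The only presentational difference: the paper applies $\Gamma_\y$ to $h(s)$ once at the start and keeps every iterate $\phi_i$ in $\Gamma_\y$-form, reconstructing $f(s)$ only once at the very end via Lemma~\ref{lem-Gamma}; you instead keep $\phi_i\in\Ryrs$ and apply Kronecker substitutions per primitive operation. Asymptotically the two are the same since the $\Gamma_\y,\Gamma_\y^{-1}$ overheads are $O(2^id^r)$ and absorbed, though the paper's version avoids redundant substitutions. One small imprecision in your multiplication step — ``apply $\overline{\Gamma}$, FFT product, then $\Gamma_s^{-1}$'' — should be $\overline{\Gamma}^{-1}$ rather than $\Gamma_s^{-1}$ when the operands sit in $\Ryrs$ rather than in $\Gamma_\y$-image form; this does not affect the complexity bound.
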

\begin{proof}
Let $f(s)=e^{h(s)}= \sum_{i \geq 0}f_i s^i$ with $f_0=1$ and $f_i \in R[y_1,\dots, y_r]$ for $i \geq 1$.
By Lemma \ref{lem-exp}, we have $f_n = \frac{1}{n}\sum_{k=1}^n f_{n-k}h'_{k-1}$.
Then the regularity of $f(s)$, i.e., $\deg_\y f_n\leq n$, follows by induction on $n$.

By Lemma \ref{lem-Gamma}, it suffices to compute $\Gamma_\y f(s)$. We use a similar process as in
the proof of Theorem \ref{theo-Nexp}.

Let $d=2^\ell$. We iteratively compute by
$$ \phi_0(s)=1, \quad   \phi_{i}(s) = \phi_{i-1}(s)\Big(1- \ln \phi_{i-1}(s)+\Gamma_\y h(s)\Big) \xmod{s^{2^i}}.  $$
Then $\phi_\ell(s)=\Gamma_\y f(s)$.

In the $i$-th iteration, since $\deg_{y_1} \Gamma_s \phi_i(s) <2^i d^r$, the computation of $\ln \phi_{i-1}(s) \xmod{s^{2^i}}$ cost $O(2^{i} d^r\log(2^{i}d^r)) $ operations in $R$. Next we
compute $\overline{\Gamma} \phi_i(s) = \overline{\Gamma} \phi_{i-1}(s) \cdot
\overline{\Gamma} (1- \ln \phi_{i-1}(s)+\Gamma_\y h(s)\Big) \xmod{s^{2^i}} $,
which cost another $O(2^{i} d^r\log(2^{i}d^r))$,
and then apply $\Gamma_s^{-1}$ to obtain $\phi_i(s)$. Thus the total cost in the $i$-th
iteration is $O(2^{i} d^{r}\log(2^{i} d^{r})) $.

A similar analysis as in the proof of Theorem \ref{theo-Nexp} gives the desired complexity.
\end{proof}

Though it is common in computer algebra to use ring operations in $\Q$, the time for one operation in $\Q$ depends on the binary encoding length of rational numbers.
This problem is also referred to as the large integer problem. In particular, we report that the quasi-linear complexity in Theorem \ref{theo-RegularExp} seems not achievable over $\Q$, according to our computer test. It is possible to do complexity analysis taking into
account of the binary encoding length of rational numbers. This type of analysis appeared in \cite{de2009ehrhartTodd}.

We favor working modulo a prime $p$, because it is an appropriate unit for one operation in $\Z_p$, or one operation in $\Z_p[\alpha]$ where $\alpha$
is algebraic over $\Z_p$. On the opposite, it is clearly not an appropriate unit for one operation in $\Z_p[y]$ when $y$ is a variable.


\subsection{The log-exponential trick}
Our algorithm for computing generalized Todd polynomials benefits from the log-exponential trick for the product of similar polynomials. This technique is demonstrated through three examples, with a particular emphasis on Example \ref{e-logexpspeedup}. We always assume the  Fast Fourier Transform (FFT) can be performed.

We commence with the rapid computation of power sums for a given multiset \( B = \{b_1, \dots, b_k\}^* \) (the $^*$ means allowing repetitions)
defined by \( p_n(B) = b_1^n + \cdots + b_k^n \).
\begin{lem}\label{lem-powersumcomplexity}
Suppose \( R \) is a commutative ring with a unit that supports the FFT. Given a multiset \( B = \{b_1, \dots, b_k\}^* \) of $k$ elements in \( R \), the \( n \)-th power sum \( p_n(B) \) for \( n = 1, \dots, d-1 \) can be computed using $O(k\log^2(d)+d\log(d))$ \mmm{R}.
\end{lem}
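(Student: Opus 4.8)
The plan is to compute all of $p_1(B),\dots,p_{d-1}(B)$ at once from a single power-series manipulation of the truncated polynomial
\[
P(s)\;:=\;\prod_{i=1}^{k}(1-b_is)\ \bmod\ s^{d},
\]
via the logarithmic-derivative identity. Since $\tfrac{d}{ds}\ln(1-b_is)=\tfrac{-b_i}{1-b_is}=-\sum_{n\ge 1}b_i^{\,n}s^{n-1}$, summing over $i$ gives $P'(s)/P(s)=-\sum_{n\ge 1}p_n(B)\,s^{n-1}$, so that
\[
p_n(B)=-[s^{n-1}]\bigl(P'(s)\,P(s)^{-1}\bigr),\qquad 1\le n\le d-1 .
\]
Here $P(0)=1$, so $P$ is invertible in $R[[s]]$; I would stress that this route uses only polynomial multiplications and one power-series reciprocal and never divides by an integer, which is why no hypothesis on $\char(R)$ is required (matching the statement, which only asks that $R$ have a unit and support FFT). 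Equivalently one could read $p_n$ off $-n[s^n]\ln P(s)$, which is more in the spirit of the log-exponential trick of this subsection, but forming $\ln P$ needs the extra assumption $d<\char(R)$ or $\Q\subset R$, which I would rather not impose.

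First I would build $P(s)\bmod s^{d}$ by a truncated subproduct tree. If $k\le d$, multiply the $k$ linear factors $1-b_is$ with the usual balanced binary product tree: at the $j$-th level above the leaves there are at most $\lceil k/2^{j}\rceil$ nodes, each a product of two polynomials of degree $<2^{j-1}$, costing $M(2^{j})=O(j2^{j})$, so that level costs $O(kj+j2^{j})$, and the $O(\log k)$ levels together cost $O(k\log^{2}k)\subseteq O(k\log^{2}d)$ — no truncation being needed, since the full product has degree $k\le d$. If $k>d$, I would first partition $B$ into $\lceil k/d\rceil$ blocks of size $\le d$, form the product of each block's linear factors (a polynomial of degree $\le d$) by the same tree at cost $O(d\log^{2}d)$ per block, hence $O\bigl((k/d)\,d\log^{2}d\bigr)=O(k\log^{2}d)$ overall, and then multiply the $\lceil k/d\rceil$ block-products together modulo $s^{d}$ in a product tree in which every multiplication is a product of two polynomials of degree $<d$ (kept reduced modulo $s^{d}$) and so costs $M(d)=O(d\log d)$, for a total of $O\bigl((k/d)\,d\log d\bigr)=O(k\log d)$. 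In all cases this first stage costs $O(k\log^{2}d+d\log d)$ operations in $R$.

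Next, from $P(s)\bmod s^{d}$ with $P(0)=1$ I would compute $P'(s)\bmod s^{d-1}$ in $O(d)$ operations, then $P(s)^{-1}\bmod s^{d-1}$ by the Newton iteration of Lemma~\ref{lem-Ndevide&ln} part~i) in $O(d\log d)$ operations (which, as noted there, involves no division in $R$), then the product $P'(s)\,P(s)^{-1}\bmod s^{d-1}$ in $M(d)=O(d\log d)$ operations by Lemma~\ref{l-FFT}, and finally read off $p_n(B)=-[s^{n-1}]\bigl(P'(s)P(s)^{-1}\bigr)$ for $1\le n\le d-1$ at no extra cost. Summing the two stages gives $O(k\log^{2}d+d\log d)$ operations in $R$, as wanted.

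The main obstacle will be the complexity bookkeeping of the first stage when $k$ and $d$ have very different orders: a single product tree over all $k$ factors costs $M(k)\log k=O(k\log^{2}k)$, which meets the stated bound only when $k=O(d)$, so for $k\gg d$ one must truncate aggressively — splitting into blocks of size $d$ so that after the first layer every polynomial handled has degree $<d$ — in order to keep the amortized cost at $O(\log^{2}d)$ per element of $B$. Everything else (the logarithmic-derivative identity, the $O(d\log d)$ reciprocal and the $O(d\log d)$ multiplication) is routine given the earlier lemmas.
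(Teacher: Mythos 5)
Your proposal is essentially the paper's proof, with one genuine and worthwhile refinement. The paper also builds $f(s)=\prod_{i}(1-b_is)\bmod s^d$ by a subproduct tree (citing \cite[Lemma 10.4]{von2013modernalgebra} for $k\le d$) and, for $k>d$, splits $B$ into blocks of size at most $d-1$; and it also obtains the power sums from a logarithmic identity. The difference is that the paper computes $-\ln f(s)\bmod s^d$ via Lemma~\ref{lem-Ndevide&ln}~ii) (whose coefficient of $s^n$ is $\tfrac{1}{n}p_n(B)$) and then scales by $n$, whereas you stop one step earlier and read the $p_n$ directly from $P'(s)P(s)^{-1}$. Your variant is actually the tighter match for the statement: Lemma~\ref{lem-Ndevide&ln}~ii) carries the hypothesis $d<\char(R)$ or $\Q\subset R$ (needed for the final integration), which Lemma~\ref{lem-powersumcomplexity} as stated does not impose; your log-derivative route uses only a Newton-iteration reciprocal and one FFT product, so it needs no division by integers and works over any FFT-supporting commutative ring with unit. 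The two analyses of the subproduct tree and of the $k>d$ block decomposition are interchangeable (the paper sums per-block power sums rather than multiplying block products modulo $s^d$, but both land at $O(k\log^2 d+d\log d)$), so the only substantive difference is your removal of the implicit characteristic restriction.
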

\begin{proof}
Let \( f(s) = (1 - b_1s)(1 - b_2s) \cdots (1 - b_ks) \). Note that
$$ -\ln(f(s)) = \sum_{j=1}^{k} \ln\left(\frac{1}{1 - b_js}\right) = \sum_{n \geq 1} \frac{1}{n} p_n(B) s^n. $$
We first compute $f(s) \xmod{s^{d}}$ using $O(k \log^2(\min(k,d))) $ \mmm{R} as we shall discuss below, and then
compute $-\ln (f(s)) \xmod{s^{d}}$ in $O(d \log(d))$ \mmm{R} by Lemma \ref{lem-Ndevide&ln} ii). This yields
the complexity of \( O(k \log^2(d)+d\log(d)) \) \mmm{R} if $k>d$
and $O(k\log^2(k)+d\log(d))$ \mmm{R} if $k\leq d$. The lemma then follows.

The coefficients \( f_i \) of \( f(s) \) can be computed using \( O(k \log^2(k)) \) \mmm{R} by \cite[Lemma 10.4]{von2013modernalgebra}.
This is already correct when $k\leq d$. If $k>d$, then we divide $B$ into a disjoint union $B_1 \uplus B_2 \uplus \cdots \uplus B_\ell$ (of multisets),
where \( \ell = \lceil k/(d-1) \rceil \) and each \( |B_i|\leq d-1 \). It follows that $p_n(B)=p_n(B_1)+\cdots + p_n(B_\ell)$ for $n<d$
can be computed using \( O(k/d \cdot d\log^2(d) + k/d \cdot d) = O(k \log^2(d)) \) \mmm{R}.
\end{proof}

\begin{cor}\label{cor-add-similar}
Consider a multiset \( B \) of \( k \) elements within the ring \( R \) as previously defined. If \( h(s) \) is provided in \( R[y][[s]] \xmod{s^d} \) and is regular with respect to \( y \) with \( h_0 = 0 \), then the computation of \( H(s) = \sum_{b \in B} h(b s) \) can be achieved using \( O(k \log^2(d) + d^2) \) \mmm{R}. Furthermore, if \( h(s) \) is independent of \( y \), the computation of \( H(s) \) can be performed using \( O(k \log^2(d) + d\log(d)) \) \mmm{R}.
\end{cor}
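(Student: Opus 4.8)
The plan is to reduce the computation of $H(s)$ to a batch power-sum evaluation followed by a coefficient-wise rescaling. First I would write the input as $h(s) = \sum_{n=1}^{d-1} h_n s^n$ with $h_n \in R[y]$, where $h_0 = 0$ and, by regularity, $\deg_y h_n \le n$. Since $b$ is a scalar, the substitution $s \mapsto bs$ commutes with truncation modulo $s^{d}$, so $h(bs) = \sum_{n=1}^{d-1} h_n b^n s^n$ modulo $s^{d}$, and therefore
\begin{align*}
 H(s) = \sum_{b\in B} h(bs) \equiv \sum_{n=1}^{d-1}\Big(\sum_{b\in B} b^n\Big) h_n\, s^n = \sum_{n=1}^{d-1} p_n(B)\, h_n\, s^n \xmod{s^{d}}.
\end{align*}
Thus $H(s)$ is completely determined by the $d-1$ power sums $p_1(B),\dots,p_{d-1}(B)$ together with the given coefficients of $h$; moreover $\deg_y H_n = \deg_y h_n \le n$, so $H(s)$ is again regular in $y$.

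The algorithm then splits into two stages. In the first stage I would compute $p_n(B)$ for $n = 1,\dots, d-1$, which by Lemma \ref{lem-powersumcomplexity} costs $O(k\log^2(d) + d\log(d))$ operations in $R$. In the second stage, for each $n$ I would multiply the polynomial $h_n$ --- which has at most $n+1 \le d$ coefficients in $R$, by regularity --- by the scalar $p_n(B)$; this is at most $d$ operations in $R$ for each $n$, hence $\sum_{n=1}^{d-1}(n+1) = O(d^2)$ operations in total. Adding the two stages yields $O(k\log^2(d) + d\log(d)) + O(d^2) = O(k\log^2(d) + d^2)$ operations in $R$, which is the first claim.

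For the case in which $h(s)$ does not involve $y$, each $h_n$ is a single element of $R$, so the rescaling stage costs just one operation in $R$ per $n$, i.e.\ $O(d)$ altogether, which is absorbed by the power-sum stage; the total then drops to $O(k\log^2(d) + d\log(d))$, giving the second claim.

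I do not expect a genuine obstacle here. The crux is simply the observation that in each fixed $s$-degree the dependence of $h(bs)$ on $b$ is a single monomial $b^n$, so summing over the multiset $B$ collapses exactly to the power sums $p_n(B)$; after that one only has to invoke Lemma \ref{lem-powersumcomplexity} and bound the rescaling cost. The one place that deserves a little care is the bookkeeping of the regularity hypothesis, since it is precisely regularity that bounds the number of $R$-coefficients of $h$ by $O(d^2)$ (and by $O(d)$ when $h$ is independent of $y$), and hence controls the cost of the second stage.
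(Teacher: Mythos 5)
Your proof is correct and follows the paper's argument essentially verbatim: you express $H(s)=\sum_{n=1}^{d-1}p_n(B)h_n s^n$, invoke Lemma~\ref{lem-powersumcomplexity} for the power sums, and bound the rescaling stage by $\sum_{n=1}^{d-1}(n+1)=O(d^2)$ using regularity (or $O(d)$ when $h$ is $y$-free). Your derivation of the key identity is spelled out a bit more explicitly than the paper's, but the approach and the accounting are the same.
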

\begin{proof}
We have \( H(s) = \sum_{n=1}^{d-1} p_n(B) h_{n} s^n \). Consequently, we can first calculate \( p_n(B) \) for \( n \leq d-1 \) using \( O(k\log^2(d) + d\log(d)) \) \mmm{R} by Lemma \ref{lem-powersumcomplexity}. Given that \( \deg_y h_n \leq n \), we require an additional \( 2 + 3 + \cdots + d = O(d^2) \) \mmm{R} to obtain \( H(s) \). In the special case where \( h(s) \) is free of \( y \), only an additional \( d \) \mmm{R} are necessary. The corollary follows from these observations.
\end{proof}

\begin{exam}\label{e-logexpspeedup}
Given $f(s) \xmod{s^{d}} \in \Z_p[s]$ where $p>d$ and $f_0=1$. Compute $g(s)=\prod_{i=1}^k f^i(s) \xmod{s^{d}}$, where
$f^i(s)=f(b_i s)$ for $b_i\in \Z_p$, and $k\geq d$.
\end{exam}
\begin{proof}[Solution]
In the conventional approach, the computation of $f^i(s)=f(b_i s) \xmod{s^{d}}$ for all $i$ necessitates \(2kd\) operations in \(\mathbb{Z}_p\). Subsequently, the \(k-1\) polynomial multiplications required for the computation of $g(s) = \prod_{i=1}^k f(b_i s) \xmod{s^{d}}$ incur a cost of \((k-1)O(d\log(d)) = O(kd\log(d))\) operations in \(\mathbb{Z}_p\) (as per Lemma \ref{l-FFT}). Consequently, the total cost of this straightforward method is \(O(kd\log(d))\) operations in \(\mathbb{Z}_p\).

We can expedite the computation by employing the log-exponential trick.
Let $h(s)=\ln (f(s))$. Then
$$\ln g(s)=H(s) = \sum_{i=1}^k h(b_i s)= \sum_{n\ge 0} h_n p_n(B) s^n,$$
where $p_n(B)=b_1^n+\cdots+b_k^n$ is the $n$-th power sum.

We commence by calculating \(h(s) \xmod{s^{d}}\) in \(O(d\log(d))\) operations in \(R\) (as per Lemma \ref{lem-Ndevide&ln} ii). Subsequently, we compute $H(s)$ using \(O(k\log^2(d)+d\log(d))\) operations in \(R\) (as per Corollary \ref{cor-add-similar}). Finally, the computation of the exponential \(g(s)=e^{H(s)} \xmod{s^{d}}\) requires \(O(d\log(d))\) operations in \(R\). This aggregates to a total cost of only \(O(k\log^2(d)+d\log(d))\) operations in \(R\).

It is important to note that the log-exponential trick does not offer any efficiency gains for computing $g(s)=f^1(s)\cdots f^k(s)$
if there is no inherent relationship among the \(f^i(s)\). In such a scenario, the computation of \(\ln(f^i(s)) \xmod{s^{d}}\) for \(i = 1, \dots, k\) would already necessitate \(O(kd\log(d))\) operations in \(R\).
\end{proof}

\begin{exam}
Compute the constant term in \( \Q((q)) \) for the following expression:
$$\CT_q \frac{1}{q^c \prod_{j=c+1}^{d} (1 - m_j q)} = [q^c] \frac{1}{\prod_{j=c+1}^{d} (1 - m_j q)}.$$
This constant term arises in the context of polytope volume computation as discussed in \cite{Volume2023}, where it is necessary to evaluate
$$\CT_q \frac{1}{\prod_{i=1}^{d} (a_i - b_iq)} = \CT_q \frac{(-1)^c}{q^c \prod_{i=1}^{c} b_i \prod_{j=c+1}^{d} a_j (1 - b_j/a_j q)}.$$
\end{exam}
\begin{proof}[Solution]
The computation of the polynomial \( \prod_{j=c+1}^{d} (1-m_j q) \) modulo \( q^{c+1} \) can be executed in \( O((d-c)\log^2(c)) \) operations in \( \mathbb{Q} \), as established in the proof of Lemma \ref{lem-powersumcomplexity}. Following this, determining the reciprocal modulo \( q^{c+1} \) entails an additional \( O(c\log(c)) \) operations in \( \mathbb{Q} \) (per Lemma \ref{lem-Ndevide&ln} i)). Hence, the total computational complexity is \( O((d-c)\log^2(c)+c\log(c)) \) operations in \( \mathbb{Q} \).
\end{proof}

\begin{exam}
Consider a sequence of positive integers \(a_0, a_1, \ldots, a_k\). If \(a_0\) is of order \(O(2^k)\), then the computation of the constant term
$$\CT_q \frac{q^{-a_0}}{(1-q)(1-q^{a_1})\cdots (1-q^{a_k})}$$
can be achieved in \(O(a_0 \log(a_0))\) time complexity. This efficiency is particularly beneficial when \(a_0 < 2^{16}\) and \(k < 16\).
\end{exam}
\begin{proof}[Solution]
The computation proceeds in several steps. First, we calculate the polynomial \(f(q) = (1-q^{a_1})\cdots (1-q^{a_k})\) in \(O(2^k)\) time, considering only the terms up to \(q^{a_0}\) as the higher-order terms will not contribute to the constant term.
Next, we determine \(g(q) \mod{q^{a_0+1}}\) in \(O(a_0\log(a_0))\) time, where \(g(q)\) is the reciprocal of \(f(q)\). This step is efficient due to the fact that we are working modulo \(q^{a_0+1}\), which limits the degree of the polynomial.
The constant term sought is the sum of the first \(a_0+1\) coefficients of \(g(q)\). Consequently, the overall running time is \(O(a_0 \log(a_0) + 2^k + a_0)\), which simplifies to \(O(a_0 \log(a_0))\) since \(2^k\) is subsumed by \(a_0\) under the given constraints. This confirms the desired time complexity for the computation.
\end{proof}

\section{Fast computation of generalized Todd polynomials}\label{s-FastGToddP}

Let $a\in \Q$, and let $B_0,\bar B_0,B_1,\bar B_1,\dots, B_r, \bar B_r$ be finite multi-sets of nonzero integers, with cardinalities $k_i,\bar{k}_i$ respectively. The generalized Todd polynomials $$gtd_n:=gtd_n(a,B_0,\bar B_0,B_1,\bar B_1,\dots, B_r, \bar B_r)$$ are defined by their generating function
\begin{align}\label{e-GTodd}
  F(s)=\sum_{n\ge 0} gtd  _n s^n&=e^{as} \frac{ \prod_{b\in B_0} f(bs)}{\prod_{b\in \bar B_0} f(b s)}
  \prod_{i=1}^r  \frac{\prod_{b\in B_i} f(bs,y_i)} {\prod_{b\in \bar B_i} f(bs,y_i)},
\end{align}
where
\begin{align*}
  f(s)= \frac{s}{e^s-1}=1+o(1), \qquad  f(s,y)= \frac{1}{1-y (e^s-1)}=1+o(1).\\
\end{align*}

By making use of the Stirling numbers of the second kind $S(n,k)$, and the Bernoulli numbers $\mathcal{B}_{k}$ defined by
$$\frac{(e^x-1)^k}{k!} = \sum_{n\ge k} S(n,k) \frac{x^n}{n!} \qquad \qquad \text{ and } \qquad \qquad \frac{x}{e^x-1} = \sum_{n\ge 0} \frac {\mathcal{B}_{n}} {n!} x^n    ,$$
we can give explicit formulas of $f(s)$ and $f(s,y)$. In fact, what we need are the formulas of $\ln f(s)$ and $\ln f(s,y)$ given by
\begin{equation}\label{e-EM}
  \begin{aligned}
 \ln f(s)=h(s)&= -\sum_{n\geq 1} \frac{\mathcal{B}_{n}}{n\cdot n!}s^{n}=-{\frac{1}{2}}s-{\frac{1}{24}}{s}^{2}+{\frac {1}{2880}}{s}^{4}+O
 \left({s}^{6}\right),\\
 \ln f(s,y)=h(s,y) & = -\ln(1-y (e^s-1)) =\sum_{k\geq 1}  \frac{y^{k}(e^s-1)^k}{k} =\sum_{k\geq 1}y^k \sum_{n\geq k} S(n,k) \frac{(k-1)!}{n!} s^n\\
    &=\sum_{n\geq 1}\left( \sum_{k=1}^n \frac{(k-1)!}{n!} S(n,k)y^k \right) s^n= \sum_{n\geq 1}  C_{n}(y) s^{n},
\end{aligned}
\end{equation}
where
\begin{align}\label{e-Cn(y)}
C_{n}(y)&=  \sum_{k=1}^n \frac{(k-1)!}{n!} S(n,k)y^k
\end{align}
is a polynomial of degree $n$ in $y$, with $C_n(0)=0$. Therefore
$f(s,y)$ is regular in $y$.

The above definition extends for ring $R$ of a more general form, but
we insist that $R$ is either $\Q$ or $\Z_p$, which is sufficient for our purpose.

\begin{lem}\label{l-hsandhsy}
Suppose \(R\) is either \(\Q\) or \(\Z_p\) with \(p>d\).
Let \(h(s)\) and \(h(s,y)\) be defined as in \eqref{e-EM}. Then
\(h(s) \mod{s^d}\) can be computed using \(O(d\log(d))\) operations in \(R\);
\(h(s,y) \mod{s^d}\) can be computed using \(O(d^2\log(d))\) operations in \(R\).
\end{lem}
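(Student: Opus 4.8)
The plan is to handle the two bounds separately, in each case reducing the computation to the quasi-linear subroutines of Section~\ref{ssec-complexity} together with a cheap preprocessing of factorials; this preprocessing is the only place the hypothesis $p>d$ is genuinely consumed, since it guarantees that $1!,2!,\dots,d!$ are units of $R$. For $h(s)=\ln f(s)$ with $f(s)=s/(e^s-1)$, I would first write $f(s)$ as the reciprocal of $g(s):=(e^s-1)/s=\sum_{n\ge 0}\frac{1}{(n+1)!}s^n$, a power series with $g_0=1$. Its truncation $g(s)\bmod s^d$ can be assembled in $O(d)$ operations in $R$: form $1!,2!,\dots,d!$ by $d-1$ multiplications, invert $d!$ once, and then recover $1/m!=(m+1)\cdot\frac{1}{(m+1)!}$ for $m=d-1,\dots,1$ by one further multiplication each. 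Then $f(s)\bmod s^d=g(s)^{-1}\bmod s^d$ costs $O(d\log d)$ by Lemma~\ref{lem-Ndevide&ln}~i), and $h(s)=\ln f(s)\bmod s^d$ costs a further $O(d\log d)$ by Lemma~\ref{lem-Ndevide&ln}~ii), with $p>d$ supplying the divisions needed in the integration step. Summing yields the asserted $O(d\log d)$.

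For $h(s,y)=-\ln(1-y(e^s-1))=\sum_{n\ge 1}C_n(y)s^n$ I would compute the coefficients directly from the Stirling form in \eqref{e-EM}--\eqref{e-Cn(y)}: build the triangular array $S(n,k)$ for $1\le k\le n\le d-1$ via the recursion $S(n,k)=kS(n-1,k)+S(n-1,k-1)$ using $O(d^2)$ operations in $R$, precompute $m!$ and $1/m!$ for $m\le d$ in $O(d)$ operations as above, and then assemble $C_n(y)=\sum_{k=1}^{n}(k-1)!\cdot\frac{1}{n!}\cdot S(n,k)\,y^k$ with $O(d^2)$ further operations into a $d\times d$ coefficient array; this already fits within the claimed $O(d^2\log d)$. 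Two alternative routes also land at that bound and I would mention at least one: expanding $h(s,y)=\sum_{k=1}^{d-1}\frac{y^k}{k}(e^s-1)^k$ by computing the powers $(e^s-1)^k\bmod s^d$ iteratively, one FFT multiplication of cost $O(d\log d)$ per $k$, then scaling and summing in $O(d^2)$ operations; or applying the $r=1$ case of Corollary~\ref{cor-logrgeq1} to the $y$-regular series $1-y(e^s-1)$ (which has constant term $1$ and coefficient $-y/n!$ of $s^n$, of $y$-degree $1\le n$), at cost $O((r+1)d^{r+1}\log d)=O(d^2\log d)$, followed by a negation.

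I expect no genuine obstacle here: once the two reductions are set up the arguments are bookkeeping. The one point that needs care is being honest about the $O(d)$ cost of producing $1/1!,\dots,1/d!$ — it must be realized as a single inversion in $R$ plus $O(d)$ multiplications, not as $d$ separate inversions — and, correspondingly, verifying that every division performed anywhere in the procedure (in forming $(e^s-1)/s$ and $1-y(e^s-1)$, and inside the $\ln$ subroutine) stays inside $R$, which is precisely what the hypothesis $p>d$ (or $\mathbb{Q}\subseteq R$) provides.
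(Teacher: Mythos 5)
Your argument for $h(s)$ matches the paper's: assemble $(e^s-1)/s\bmod s^d$ in $O(d)$ operations and invoke Lemma~\ref{lem-Ndevide&ln}; the intermediate inversion $g(s)^{-1}$ is slightly wasteful because $\ln f(s)=-\ln g(s)$ requires only one call to Lemma~\ref{lem-Ndevide&ln}~ii) followed by a sign flip, but the $O(d\log d)$ bound is unaffected. For $h(s,y)$ you take a genuinely different primary route. The paper's proof is exactly your third alternative: assemble the $y$-regular truncation $1-y(e^s-1)\bmod s^d$ in $O(d)$ operations and apply Corollary~\ref{cor-logrgeq1} with $r=1$, at cost $O(d^2\log d)$. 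Your primary route instead builds the Stirling triangle $S(n,k)$ for $1\le k\le n\le d-1$ via $S(n,k)=kS(n-1,k)+S(n-1,k-1)$ in $O(d^2)$ operations, precomputes $m!$ and $1/m!$, and assembles $C_n(y)=\sum_{k=1}^n\frac{(k-1)!}{n!}S(n,k)\,y^k$ directly in $O(d^2)$ further operations---elementary, FFT-free, and a factor of $\log d$ tighter than the stated bound. The paper's route buys uniformity, reusing the Corollary~\ref{cor-logrgeq1} machinery already set up for general $r$; yours buys a simpler correctness argument and localises the use of $p>d$ to the invertibility of $1!,\dots,d!$. Your insistence on producing $1/1!,\dots,1/d!$ by one field inversion plus $O(d)$ multiplications is good implementation hygiene, though for a field $R$ even $d$ separate inversions count as $O(d)$ ring operations under the usual convention, so this is a refinement rather than a necessity. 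Both routes correctly establish the lemma.
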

\begin{proof}
The computation of \(h(s) \mod{s^d}\) begins with the construction of the following expression using \(O(d)\) operations in \(R\):
\begin{align*}
  \pxmod{(e^s-1)/s}{\sum_{n=1}^{d-1} \frac{s^{n-1}}{n!}}{s^{d}}.
\end{align*}
Thereafter, \(\ln f(s)\) can be determined using Lemma \ref{lem-Ndevide&ln} with an additional \(O(d\log(d))\) operations in \(R\).

For \(h(s,y)\), we initiate the computation by constructing the following expression using \(O(d)\) operations in \(R\):
\begin{align*}
  \pxmod{1-y(e^s-1)}{1-\sum_{n=1}^{d-1} \frac{y}{n! }s^{n}}{s^{d}}.
\end{align*}
Subsequently, \(\ln f(s,y)\) can be calculated using Corollary \ref{cor-logrgeq1} with a further \(O(d^2\log(d^2))\) operations in \(R\), which simplifies to \(O(d^2\log(d))\).
\end{proof}

\begin{theo}\label{theo-GTodd}
Suppose \(R\) is either \(\Q\) or \(\Z_p\) with \(p > d\).
Given \(a\), \(B_0,\bar B_0,B_1,\bar B_1,\dots, B_r, \bar B_r\) and \(F(s)\) as defined in \eqref{e-GTodd},
if \(r\geq 1\) then we can compute the sequence \((gtd_0,gtd_1,\dots, gtd_{d-1})\) of generalized Todd polynomials
using \(O\Big((r+1)d^{r+1}\log(d)+ \log^2(d)\sum_{i=0}^{r} (|B_i|+|\bar B_i|) \Big)\) operations in \(R\).
\end{theo}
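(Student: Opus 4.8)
The plan is to use the log--exponential trick: first compute $\ln F(s)$ modulo $s^{d}$, which turns the big product in \eqref{e-GTodd} into a sum, and then recover $F(s)$ modulo $s^{d}$, whose coefficients are the $gtd_n$, by a single exponentiation. Writing $h(s)=\ln f(s)$ and $h(s,y)=\ln f(s,y)$ as in \eqref{e-EM} and taking logarithms in \eqref{e-GTodd}, we get
\begin{align*}
\ln F(s) = as + \sum_{b\in B_0} h(bs) - \sum_{b\in \bar B_0} h(bs) + \sum_{i=1}^{r} H_i(s), \qquad H_i(s):=\sum_{b\in B_i} h(bs,y_i)-\sum_{b\in \bar B_i} h(bs,y_i).
\end{align*}
Since $f(0)=f(0,y)=1$ we have $F(0)=1$, so $\ln F(s)$ has zero constant term; and since $\deg_{y} C_n(y)\le n$ by \eqref{e-Cn(y)} while $h(s)$ is free of $\mathbf{y}$, every $s^n$-coefficient of $\ln F(s)$ has total $\mathbf{y}$-degree $\le n$ (sums preserve this). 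Hence $\ln F(s)$ is regular in $\mathbf{y}$ with $h_0=0$, and, as $p>d$, Theorem \ref{theo-RegularExp} will be legitimately applicable to the final exponentiation.

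The computation then proceeds in four steps, all hypotheses ($p>d$, regularity, vanishing constant term) being met by the cited results. First, compute $h(s)\bmod s^{d}$ in $O(d\log d)$ operations and the coefficients $C_1(y),\dots,C_{d-1}(y)$ of $h(s,y)\bmod s^{d}$ in $O(d^2\log d)$ operations, both by Lemma \ref{l-hsandhsy}; the single series $h(s,y)$ is reused for every $y_i$. Second, apply Corollary \ref{cor-add-similar} to each of $\sum_{b\in B_0}h(bs)$ and $\sum_{b\in\bar B_0}h(bs)$ (scalar case, $O((|B_0|+|\bar B_0|)\log^2 d+d\log d)$) and to each of $\sum_{b\in B_i}h(bs,y_i)$ and $\sum_{b\in\bar B_i}h(bs,y_i)$ for $1\le i\le r$ (regular case, $O((|B_i|+|\bar B_i|)\log^2 d+d^2)$ per $i$); subtracting the paired sums yields the $B_0$-part and the $H_i(s)$. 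Third, add $as$, the $B_0$-part and $H_1(s),\dots,H_r(s)$; the running sum is regular in $\mathbf{y}$, has $O(d^{r+1})$ coefficients, and each $H_i$ contributes only $O(d^2)$ nonzero terms, so this costs $O(rd^2+d^{r+1})$. Fourth, apply Theorem \ref{theo-RegularExp} to compute $e^{\ln F(s)}\bmod s^{d}$ in $O((r+1)d^{r+1}\log d)$ operations; reading off the coefficients gives $(gtd_0,\dots,gtd_{d-1})$.

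Summing the costs gives
\begin{align*}
O\left((r+1)d^{r+1}\log d + \log^2(d)\sum_{i=0}^{r}(|B_i|+|\bar B_i|) + d^2\log d + rd^2 + d\log d + d^{r+1}\right),
\end{align*}
and when $r\ge 1$ one has $d^{r+1}\ge d^2\ge d$ and $r+1\ge 2$, so each of the last four terms is dominated by $(r+1)d^{r+1}\log d$, which yields the stated bound. The argument is mostly bookkeeping; the point that needs genuine care is verifying that $\ln F(s)$ is regular in $\mathbf{y}$ with vanishing constant term (so Theorem \ref{theo-RegularExp} applies), together with checking that the $O(d^2\log d)$ cost of producing the $C_n(y)$ and the $O(rd^2)$ cost of assembling and adding the $r$ pieces $H_i$ are absorbed once $r\ge 1$ — which is exactly why that hypothesis is imposed.
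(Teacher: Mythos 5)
Your proof is correct and follows essentially the same route as the paper's: take $\ln F(s)$ to convert the product into the sum $as + h(s;B_0)-h(s;\bar B_0)+\sum_i(h(s,y_i;B_i)-h(s,y_i;\bar B_i))$, build each piece via Lemma~\ref{l-hsandhsy} and Corollary~\ref{cor-add-similar}, and finish with Theorem~\ref{theo-RegularExp} applied to the regular, constant-term-free series $\ln F(s)$. Your write-up is slightly more explicit than the paper's on two points the paper leaves implicit — verifying the regularity and zero constant term needed to invoke Theorem~\ref{theo-RegularExp}, and checking that the lower-order terms ($d^2\log d$, $rd^2$, etc.) are absorbed once $r\ge 1$ — but the underlying argument is identical.
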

\begin{proof}
Let \(h(s;B)=\sum_{b\in B} h(bs)\) and \(h(s,y;B)=\sum_{b\in B} h(bs,y)\). Then
\begin{equation}
  \ln(F(s))=H(s)=as + h(s;B_0) - h(s;\bar B_0) + \sum_{i=1}^r h(s,y_i;B_i)-h(s,y_i;\bar B_i) . \label{eq-diffH}
\end{equation}
We proceed by constructing \(H(s) \mod{s^d}\) and subsequently computing \(F(s)\equiv e^{H(s)} \mod{ s^{d}}\),
yielding the desired sequence \((gtd_0,gtd_1,\dots, gtd_{d-1})\).

In Step 0, we utilize Lemma \ref{l-hsandhsy} to compute \(h(s)\) and \(h(s,y)\) with \(O(d^2\log(d))\) operations in \(R\).

In Step 1, we apply Corollary \ref{cor-add-similar} to compute \(h(s;B_0)-h(s;\bar B_0) \mod{s^{d}}\)
using \(O\Big(d\log(d)+\log^2(d)(|B_0|+|\bar B_0|)\Big)\) operations in $R$, and similarly for \(h(s,y_i;B_i)-h(s,y_i;\bar B_i) \mod{s^{d}}\) for \(1 \leq i\leq r\), using \(O\Big(rd^2 +\log^2(d)\sum_{i=1}^{r} (|B_i|+|\bar B_i|)\Big)\) operations in $R$. The combined cost for this step is
\(O\Big(r d^2+ d\log(d)+\log^2(d)\sum_{i=0}^{r} (|B_i|+|\bar B_i|)\Big) \).

In Step 2, we invoke Theorem \ref{theo-RegularExp} to compute \(e^{H(s)} \mod{s^{d}}\). The operational cost for this step
amounts to \(O\Big((r+1)d^{r+1}\log(d)\Big)\).

The total complexity is the sum of the complexities of the individual steps, yielding the asserted bound.
\end{proof}

The case \( r = 0 \) is handled distinctly. We give the following Theorem.

\begin{theo}
\label{theo-cor-todd}
With the notation established in Theorem \ref{theo-GTodd}, and considering the specific case where \( r = 0 \), the function \( F(s;a,B_0,\bar B_0) \) is given by
$$ F(s;a,B_0,\bar B_0) = e^{as}\prod_{b\in B_0} f(bs) \prod_{b\in \bar B_0} f(b s)^{-1}. $$
When \( a = \frac{p_1(B_0)-p_1(\bar B_0)}{2} \), the sequence \( (td_0,td_1,\dots, td_{d-1}) \pmod{p} \) can be computed in time \( O\Big(d\log(d)+\log^2(d)(|B_0|+|\bar{B}_0|)\Big) \).
\end{theo}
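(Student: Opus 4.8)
The plan is to run the two--stage scheme of Theorem~\ref{theo-GTodd} — build $H(s)=\ln F(s)$ modulo $s^{d}$, then exponentiate — while exploiting that there are no slack variables here ($r=0$), so that the two bottlenecks of Theorem~\ref{theo-GTodd} disappear: we never have to compute $h(s,y)$ modulo $s^{d}$ (the $O(d^{2}\log d)$ part of Lemma~\ref{l-hsandhsy}), and there is no regularity overhead in the exponential. Taking logarithms in \eqref{e-GTodd} with $r=0$ and using \eqref{e-EM},
\[
 \ln F(s;a,B_0,\bar B_0)=H(s)=as+h(s;B_0)-h(s;\bar B_0),\qquad h(s;B):=\sum_{b\in B} h(bs),
\]
where $h(s)=\ln f(s)=-\sum_{n\ge 1}\frac{\mathcal B_n}{n\cdot n!}s^{n}$. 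The point of the special value $a=\tfrac12(p_1(B_0)-p_1(\bar B_0))$ — and the reason for writing $td$ rather than $gtd$ — is the symmetry $f(-s)=e^{s}f(s)$, equivalently the vanishing of $\mathcal B_n$ for odd $n\ge 3$: it says $h(s)+\tfrac{s}{2}$ is even, hence $h(s;B_0)-h(s;\bar B_0)$ equals an even power series minus $\tfrac{s}{2}(p_1(B_0)-p_1(\bar B_0))$, and adding $as$ with this particular $a$ cancels that linear term, leaving $H(s)$ an \emph{even} power series. In particular $H_0=0$ (which in fact holds for every $a$), so $F(s)=e^{H(s)}$ is defined and is itself even; thus $td_{2k+1}=0$ and only $td_0,td_2,td_4,\dots$ need to be produced. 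I also note that the value of $a$ plays no role in the complexity, so the same bound covers the classical choice $a=0$, $\bar B_0=\emptyset$ from the abstract.

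I would then assemble the algorithm in four steps, all performed over $R\in\{\Q,\Z_p\}$ with $p>d$, which legitimizes every division used (by integers $n<d$, and by $2$ in forming $a$). \emph{Step A:} compute $h(s)$ modulo $s^{d}$ by Lemma~\ref{l-hsandhsy} in $O(d\log d)$ operations in $R$ — here we need only the $y$--free series $h(s)$. \emph{Step B:} compute the power sums $p_n(B_0)$ and $p_n(\bar B_0)$ for $1\le n\le d-1$ via Lemma~\ref{lem-powersumcomplexity}, and form $h(s;B_0)-h(s;\bar B_0)=\sum_{n\ge 1} h_n\,\bigl(p_n(B_0)-p_n(\bar B_0)\bigr)s^{n}$ modulo $s^{d}$; this is precisely the $y$--free case of Corollary~\ref{cor-add-similar} applied to $B_0$ and to $\bar B_0$, at cost $O\bigl(d\log d+\log^{2} d\,(|B_0|+|\bar B_0|)\bigr)$ operations in $R$ (and, since we work modulo $p$, each $p_n$ is a single element of $\Z_p$, avoiding large integers). \emph{Step C:} add $as$ to obtain $H(s)$ modulo $s^{d}$, with $H_0=0$, in $O(d)$ operations. \emph{Step D:} compute $F(s)=e^{H(s)}$ modulo $s^{d}$ by the Newton iteration of Theorem~\ref{theo-Nexp} in $O(d\log d)$ operations in $R$; optionally one substitutes $u=s^{2}$ and exponentiates modulo $u^{\lceil d/2\rceil}$ to use the evenness of $H$, which only improves the constant.

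Summing the four bounds gives $O\bigl(d\log d+\log^{2} d\,(|B_0|+|\bar B_0|)\bigr)$, as claimed, and the reduction modulo $p$ is automatic from working over $\Z_p$ throughout. The argument is essentially an assembly of facts already established in Sections~\ref{s-FastManipulations}--\ref{s-FastGToddP}, so I do not expect a serious obstacle; the only points requiring genuine (rather than bookkeeping) argument are: (i) verifying $f(-s)=e^{s}f(s)$ and deducing that the prescribed $a$ kills the odd part of $H$, so that the $td_k$ are the symmetrised Todd polynomials and $td_{2k+1}=0$; and (ii) making sure the exponential in Step~D is taken through Theorem~\ref{theo-Nexp} rather than the naive $O(d^{2})$ recursion of Lemma~\ref{lem-exp}, so that this step does not dominate. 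One should also double--check that $p>d$ covers every division occurring inside Lemmas~\ref{l-hsandhsy} and~\ref{lem-powersumcomplexity} and inside the exponential, which it does.
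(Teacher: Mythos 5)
Your proposal is correct and follows essentially the same route as the paper: build $H(s)=as+h(s;B_0)-h(s;\bar B_0)$ using the Bernoulli series for $h(s)=\ln f(s)$ and Corollary~\ref{cor-add-similar}, then exponentiate via Theorem~\ref{theo-Nexp}, with the special $a$ killing the linear term so that $H$ is even. The only cosmetic difference is that the paper carries out the substitution $\bar s=s^2$ in earnest (it works with the power sums $p_n(B_0^2)=p_{2n}(B_0)$ modulo $\bar s^{\lfloor d/2\rfloor}$ and exponentiates in $\bar s$), whereas you treat the halving as an optional constant-factor refinement; both yield the claimed bound $O\bigl(d\log d+\log^2(d)(|B_0|+|\bar B_0|)\bigr)$.
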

Note that the case with an empty \( \bar{B}_0 \) has been treated in \cite{de2009ehrhartTodd} over \(\mathbb{Q}\). Our result offers a more efficient computation.
\begin{proof}
By substituting \( a \), \( B_0 \), and \( \bar B_0 \) into \eqref{eq-diffH}, we obtain
\begin{align}
  H(s)\xmod{s^{d}}&=\frac{p_1(B_0)-p_1(\bar B_0)}{2} s+ \sum_{n=1}^{d-1} \bigg( \frac{\mathcal{B}_{n}}{n \cdot n !} \left(p_{n}(B_0)-p_{n}(\bar B_0)\right)
 \bigg)  s^{n} \nonumber \\
 &=\sum_{n=1}^{\lfloor \frac{d-1}{2} \rfloor} \bigg( \frac{\mathcal{B}_{2n}}{2n\cdot(2n)!} \left(p_{2n}(B_0)-p_{2n}(\bar B_0)\right)\bigg)  s^{2n}.\label{e-H-half}
\end{align}
Here, we exploit the fact that \( \mathcal{B}_{2r+1} = 0 \) for \( r > 0 \), indicating that \( H(s) \) is an even function, and thus \( td_n=0 \) for all odd \( n \). While this property does not reduce the complexity, we proceed with a variable substitution, setting \( \bar{s} = s^2 \), so that \( \bar{H}(\bar{s}) = H(s) \).

To construct \( \bar{H}(\bar{s}) \xmod{\bar s^{d'}} \), where \( d'=\lfloor d/2 \rfloor \),
we must first compute \( B_0^2=\{b^2: b\in B_0\}^* \) and \( \bar{B}_0^2 \) using \( |B_0|+|\bar{B}_0| \) operations in $R$. Then,
we calculate \( p_{2n}(B_0)=p_n(B_0^2) \) and \( p_{2n}(\bar{B}_0)=p_n(\bar B_0^2) \) for \( n < d' \) using \( O\Big((|B_0|+|\bar{B}_0|)\log^2(d')+2d'\log(d')\Big) \) operations in $R$.
Applying the relevant formula \eqref{e-H-half} requires \( O(d') \) operations in $R$. The overall cost for this step is
\( O\Big((|B_0|+|\bar{B}_0|)\log^2(d') +2d'\log(d')\Big) \).

Subsequently, applying Theorem \ref{theo-Nexp} with respect to \( \bar{s} \) yields \( e^{\bar{H}(\bar{s})} \xmod{\bar s^{d'}} \), which corresponds to \( F(s) \) after reverting to the original variable \( s \) via \( \bar{s} = s^2 \).
This step incurs a cost of \( O(d'\log (d')) \). The theorem follows directly from these computations.
\end{proof}

\section{Constant Terms of generalized Todd Type}\label{s-ConstantTermsGTodd}
For a Laurent series $G(s)=\sum_{n\ge N} G_n s^n$ over a ring $R$, i.e.,
$G_n\in R$, the order of $G(s)$ is defined to be
$\theta_s(G(s))=\min \{ n: G_n\neq 0\}$. When the $s$ is clear from the context, we use $\theta$ as short for $\theta_s$.
For instance, $\theta(2s^{-2}+1+3s+\cdots)=-2$, and $\theta(s+5s^2+\cdots)=1$.
Let $d=-\theta(G(s))$, then $G(s)s^{d}$ is a power series.
In other words, $-\theta(G(s))$ is the smallest integer $d$ such that $G(s)s^{d}$ is a power series.

We will use the following easy facts without mentioning: i) $\theta (G(s)\pm  H(s))\geq \min(\theta(G(s)),\theta(H(s)))$; ii) $\theta (G(s) H(s))=\theta(G(s))+\theta(H(s))$;
iii)
$\theta(G(s)^{-1})=-\theta(G(s))$.

The constant term of $G(s)$ is denoted $\CT_s G(s)=G_0$. In particular, if $\theta(G(s))\ge 0$ then $\CT_s G(s)=G(0)$, and we call this case \emph{trivial}.
Thus we shall focus on the $d=-\theta(G(s))>0$ case.  In this section, we consider $\CT_s G(s)$ for $G(s)$ of the following
generalized Todd Type
\begin{align}\label{eq_Gtype}
G(s)= L(e^s) \frac{\prod_{b \in \bar{B}_0} (1-e^{b s})} {\prod_{b \in B_0} (1- e^{b s})}
\prod_{i=1}^r \frac{\prod_{b \in \bar{B}_i } (1- e^{b s} t_i)} {\prod_{b \in B_i} (1- e^{b s}t_i)},
\end{align}
where i) $t_i\neq 1$ are distinct elements;
ii) $L(\kappa)$ is a linear combination of $U$ monomials written as $L(\kappa)=\sum_{i=1}^U \ell_i  \kappa^{a_i}$ where $a_i$ are rational numbers;
iii) $B_i,\bar B_i$ are multi-sets of nonzero rational numbers for $i=0,1,\dots, r$. When modulo a prime $p$, rational numbers should be valid elements in $\Z_p$.

Before going further, let us briefly explain these technical conditions.

\begin{enumerate}
  \item In lattice point counting theory, $L$ is usually a monomial. We allow $L$ to be a Laurent polynomial because it is standard in constant term world (or in MPA), and it also applies to lattice point counting in terms of  K{\"o}ppe's primal decomposition \cite{PrimalBarvinok}.

  \item For lattice point counting, we only have $B_0$, with all the other multi-sets being empty. We add $\bar{B}_0$ because
it appears in a recent work \cite[Example 5.5]{Fliuxin23} on Frobenius problems, especially in the computation of Sylvester number.
In that example, one has
\begin{align*}
f(x)&=1+\frac{1-x^{dk}}{1-x^d}\cdot \frac{x^{ha+d}(1-x^{(ha+dk)s})}{1-x^{ha+dk}}
+\frac{x^{(s+1)ha+d(sk+1)}(1-x^{dr_1})}{1-x^d},
\end{align*}
where $a,d, h,k,s$ are integer parameters, and one needs $f'(x)|_{x=1}$. By writing
\begin{align*}
f'(e^t)&=\frac{dke^{t(ah+d(k+1))}\left(1-{{e}^{t((ah+dk)s)}}\right)}{{{e}^{t}}\left(1- {{e}^{td}}\right)\left(1-{{e}^{t(ah+dk)}}\right)}+ \cdots,
\end{align*}
one needs to compute its limit at $t=0$.

\item The appearance of $B_i$ is natural in MPA as we shall discuss in Section \ref{s-ApplicationMPA}. The appearance of $\bar{B}_i$
arose in the computation of the constant term of
  \begin{align*}
  F(d) &=  \frac{1}{\prod_{i=0}^{d} x_i(1-x_i)} \cdot \prod_{0\le i < j \le d} \frac{1 - 3 x_j/x_i + (x_j/x_i)^2}{(1-x_j/x_i)^2}.
  \end{align*}
This problem was asked by Zhicong Lin \cite{LinProblem}, where one shall treat $1>x_0>x_1>\dots >x_d>0$ for series expansion.
The case $d=1$ already explains the appearance of $\bar B_1$. Let $\alpha, \alpha^{-1} $ be the two roots of $y^2 - 3y + 1 = 0$. Then
we have
\begin{align*}
  \CT_{\x} F(1) &= \CT_{\x} \frac{x_0^{-1} x_1^{-1} (1 - \alpha x_1/x_0) (1 - x_1/(\alpha x_0))}{(1 - x_0) (1 - x_1) (1 - x_1/x_0)^2}\\
   &= \CT_{x_0} \CT_{x_1} \frac{x_0^{-1} x_1^{-1} (1 - \alpha x_1/x_0) (1 - x_1/(\alpha x_0))}{(1 - x_0 z_1) (1 - x_1 z_2) (1 - z_3 x_1/x_0) (1 - z_4 x_1/x_0)} \Bigg|_{z_i =1}\\
   &= \frac{z_1 z_2 (1 - \alpha z_1 z_2^{-1}) (1 - z_1/(\alpha z_2))}{(1 - z_1 z_2^{-1} z_3) (1 - z_1 z_2^{-1} z_4)} \\
   &\quad + \frac{z_2^2 z_3 (1 - \alpha z_3^{-1}) (1 - \alpha^{-1} z_3^{-1})}{(1 - z_1^{-1} z_2 z_3^{-1}) (1 - z_3^{-1} z_4)}
   + \frac{z_2^2 z_4 (1 - \alpha z_4^{-1}) (1 - \alpha^{-1} z_4^{-1})}{(1 - z_1^{-1} z_2 z_4^{-1}) (1 - z_3 z_4^{-1})} \Bigg|_{z_i =1}.
\end{align*}

From this example, we see an instance of $t_1=\alpha$, being an algebraic number.
\end{enumerate}

Now we turn to the computation of $\CT_s G(s)$.

Firstly, we need to compute the order of $G(s)$, which is given by
$$-d=\theta(G(s))= \theta(L(e^s))+|\bar{B}_0|-|B_0|,$$
as $\theta(1 - e^{b s} t_i) = 0$ for $t_i \neq 1$ and $\theta(1 - e^{b s}) = 1$.

We shall focus on the $d>0$ case, for the $d\leq 0$ case is trivial. Observe that $d_0=\theta(L(e^s))\ge 0$.
It follows that $d_1=|B_0|-|\bar{B}_0|$ has to be positive. Now $d=d_1-d_0> 0$ implies that $d_1>d_0$.

Let $y_i=t_i/(1-t_i)$ and follow notation in Eq.\eqref{e-GTodd}. Then we have
$$\frac{1-t_i}{1-e^s t_i}=\frac{1}{1- \frac{t_i}{1-t_i}(e^s-1)}= \frac{1}{1-y_i (e^s-1)} =f(s,y_i).$$
Now we can rewrite $s^d G(s)$ as a product of power series as follows.
\begin{align*}
 s^d G(s)&=A \cdot E(s) \cdot F(s),
 \end{align*}
 where $A$ (free of $s$), $E(s)$ and $F(s)$ are given by
 \begin{align}
A&= (-1)^{|\bar B_0|+|B_0|}\cdot \frac{\prod_{b \in \bar{B}_0}b}{\prod_{b \in B_0} b} \cdot\prod_{i=1}^r (1-t_i)^{|\bar B_i|-|B_i|} \label{eq-A}
,\\
E(s)&= L(e^s)/s^{d_0} = \sum_{n\geq d_0} \frac{s^{n-d_0}}{n!} \sum_{i=1}^{U} \ell_i a_i^n,   \label{eq-E} \\
F(s)&=\frac{\prod_{b \in {B}_0} f(bs)} {\prod_{b \in \bar{B}_0} f(bs)}
\prod_{i=1}^r \frac{\prod_{b \in {B}_i }f(bs,y_i) } {\prod_{b \in \bar{B}_i} f(bs,y_i) }. \label{eq-F}
\end{align}
Note that the $F(s)$ is exactly the $F(s)$ in \eqref{e-GTodd} when $a=0$.

Let us say that a prime number $p$ is \emph{suitable} for $G(s)$ if $p > d_1=|B_0|-|\bar{B}_0|$  and $B_0$ contains no $0$ in $\Z_p$.
We have the following algorithm.

\textbf{Algorithm CTGTodd}

Input: A suitable prime $p$ and $G(s)= L(e^s) \frac{\prod_{b \in \bar{B}_0} (1-e^{b s})} {\prod_{b \in B_0} (1- e^{b s})}
\prod_{i=1}^r \frac{\prod_{b \in \bar{B}_i } (1- e^{b s} t_i)} {\prod_{b \in B_i} (1- e^{b s}t_i)}$.

Output: $\CT_s G(s)$ modulo $p$.
\begin{enumerate}
\item[S1] Set $d_1=|B_0|-|\bar{B}_0|$ and compute $L(e^s) \xmod{s^{d_1+1}} = \sum_{n=0}^{ d_1} \frac{s^{n}}{n!} \sum_{i=1}^{U} \ell_i a_i^n$. Determine $d_0=\theta(L(e^s))$ and
$d=-\theta(G(s))$. If $d\leq 0$ then return $G(0)$; else  do the following steps.

\item[S2] Set $E(s)\xmod{s^{d+1}}=\sum_{n=0}^{d} E_n s^n= \sum_{n=d_0}^{d_1} \frac{s^{n-d_0}}{n!} \sum_{i=1}^{U} \ell_i a_i^n $.

  \item[S3] Next Compute $F(s) \xmod{s^{d+1}}=\sum_{n=0}^{d} F_n s^n $ by Theorem \ref{theo-GTodd}.

  \item[S4] Return
  \begin{equation}\label{eq-ctGs}
    \CT_s G(s)=[s^d]s^d G(s)= A \sum_{n=0}^d E_n F_{d-n},
  \end{equation}
 where we make the substitutions $y_i=t_i/(1-t_i)$ back and $A$ is given in \eqref{eq-A}.
\end{enumerate}

\begin{prop} \label{prop-CTG}
Under the established notation, if \( p \) is a suitable prime, that is, \( p > d_1 \) and \( B_0 \) contains no multiples of \( p \), then Algorithm CTGTodd correctly computes \( \CT_s G(s) \) over \( \Z_p \) in time \( O((r+1)d^{r+1}\log(d)+\log^2(d) \sum_{i=0}^{r}(|B_i|+|\bar{B}_i|)) \). The output takes the form
$$ \CT_s G(s) = A \sum_{n=0}^{d} E_n \sum_{k_1+k_2+\cdots +k_r \leq d-n} c_{k_1,\dots,k_r}
y_1^{k_1}\cdots y_r^{k_r}, $$
where \( k_i \) are nonnegative integers, and \( c_{k_1,\dots, k_r} \in \Z_p \).
In particular, if \( \bar{B}_i \) are all empty, then \( \CT_s G(s) \) can be computed as a sum of at most \( \binom{d+r}{r} \) simple rational functions, each having a numerator that is a Laurent polynomial with at most \( U \) monomials, and a denominator that is a product of at most \( d+\sum_{i=1}^r ({|B_i|}) \) binomials.
\end{prop}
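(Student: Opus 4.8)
The plan is to verify four things, in order: (i) that S1 correctly disposes of the $d\le 0$ case and identifies $d_0,d$ and the needed truncation of $L(e^s)$; (ii) that the factorization $s^dG(s)=A\cdot E(s)\cdot F(s)$ behind the algorithm is valid, so that \eqref{eq-ctGs} is the correct output; (iii) the running time; and (iv) the structural ``In particular'' claim. For (i): if $d\le 0$ then $\theta(G(s))\ge 0$, so $\CT_s G(s)=G(0)$, a finite product lying in $\Z_p$ because $p$ is suitable (meaning $p>d_1$ and no $b\in B_0$ is $0$ mod $p$, so $G(s)$ makes sense over $\Z_p$) --- this is the trivial branch. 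If $d>0$, then from $\theta(G(s))=\theta(L(e^s))+|\bar B_0|-|B_0|=d_0-d_1$ together with $d=d_1-d_0>0$ one gets $0\le d_0\le d_1$; hence the truncation $L(e^s)\xmod{s^{d_1+1}}$ formed in S1 contains every coefficient of $L(e^s)$ of degree $\le d_1$, so $d_0=\theta(L(e^s))$ and $d$ are read off correctly, and these are exactly the coefficients needed to build $E(s)\xmod{s^{d+1}}=\sum_{n=d_0}^{d_1}\frac{s^{n-d_0}}{n!}\sum_{i=1}^{U}\ell_i a_i^n$ in S2 (which requires $1/n!$ for $n\le d_1$, the reason the ``suitable'' condition asks for $p>d_1$).

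Step (ii) is essentially the computation carried out in the paragraphs just before Algorithm CTGTodd, so I would only recall it: with $y_i=t_i/(1-t_i)$ one has $\frac{1-t_i}{1-e^st_i}=f(s,y_i)$ and $1-e^{bs}=-bs\,f(bs)^{-1}$, and rearranging $s^dG(s)$ gives $A\cdot E(s)\cdot F(s)$ with $A$, $E(s)$, $F(s)$ as in \eqref{eq-A}, \eqref{eq-E}, \eqref{eq-F}. Here $E(s)$ is a power series starting in degree $0$ (precisely because $d_0=\theta(L(e^s))$), while $F(s)$ is the series of \eqref{e-GTodd} with $a=0$, so $F(0)=1$ and, by Proposition \ref{prop-regular}, $F$ is regular in $\y$. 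Since $s^dG(s)$ is a power series and $\CT_s G(s)=[s^d]\bigl(s^dG(s)\bigr)$, the convolution $[s^d]\bigl(A\,E(s)F(s)\bigr)=A\sum_{n=0}^{d}E_nF_{d-n}$ depends only on $E(s)\xmod{s^{d+1}}$ and $F(s)\xmod{s^{d+1}}$, which is exactly \eqref{eq-ctGs}; so S4 is correct once S2 and S3 are. S2 is immediate, and S3 is correct by Theorem \ref{theo-GTodd} applied to $F(s)$ with the modulus there taken to be $d+1$ and $a=0$ when $r\ge 1$; when $r=0$ the same two-step recipe --- build $\ln F(s)=\sum_{b\in B_0}h(bs)-\sum_{b\in\bar B_0}h(bs)$ via Corollary \ref{cor-add-similar} (with $h$ free of $y$) and then exponentiate via Theorem \ref{theo-Nexp} --- works verbatim. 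All the inverses actually used by these logarithm/exponential computations modulo $s^{d+1}$ are those of integers $\le d$, which exist in $\Z_p$ since $p>d_1\ge d$.

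For (iii): forming $L(e^s)\xmod{s^{d_1+1}}$ and $E(s)\xmod{s^{d+1}}$ in S1--S2 costs $O(Ud_1)$ ring operations, of lower order (and, up to the factor $U$, absorbed into the $\sum(|B_i|+|\bar B_i|)$ term since $d_1\le|B_0|$). S3 costs $O\bigl((r+1)d^{r+1}\log d+\log^2 d\sum_{i=0}^{r}(|B_i|+|\bar B_i|)\bigr)$ by Theorem \ref{theo-GTodd} (resp.\ its $r=0$ analogue), as replacing $d$ by $d+1$ leaves the asymptotics unchanged. In S4, assembling $A$ from \eqref{eq-A} costs $O\bigl(\sum_i(|B_i|+|\bar B_i|)\bigr)$; and since $F$ is regular, $\deg_\y F_{d-n}\le d-n$, so $F_{d-n}$ has at most $\binom{d-n+r}{r}$ monomials and $A\sum_{n=0}^d E_nF_{d-n}$ is formed in $O\bigl(\sum_{n=0}^d\binom{d-n+r}{r}\bigr)=O\bigl(\binom{d+r+1}{r+1}\bigr)=O(d^{r+1})$ operations, absorbed by the S3 bound. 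Summing the three contributions yields the stated total.

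Finally, for (iv): grouping $A\sum_{n=0}^d E_nF_{d-n}$ by the monomials $y^{\mathbf k}=y_1^{k_1}\cdots y_r^{k_r}$ that occur and using $\deg_\y F_{d-n}\le d-n\le d$, only exponent vectors $\mathbf k$ with $k_1+\cdots+k_r\le d$ appear --- there are $\binom{d+r}{r}$ of them, with coefficient of $y^{\mathbf k}$ equal to $A\sum_{n=0}^{d-|\mathbf k|}E_n\,[y^{\mathbf k}]F_{d-n}\in\Z_p$, which is the displayed form in the statement. When every $\bar B_i$ is empty, $A=(-1)^{|\bar B_0|+|B_0|}\frac{\prod_{b\in\bar B_0}b}{\prod_{b\in B_0}b}\prod_{i=1}^r(1-t_i)^{-|B_i|}$; substituting $y_i=t_i/(1-t_i)$ turns the $\mathbf k$-term into a scalar times $\frac{\prod_i t_i^{k_i}}{\prod_i(1-t_i)^{k_i+|B_i|}}$, a simple rational function whose denominator is a product of $\sum_i(k_i+|B_i|)\le d+\sum_{i=1}^r|B_i|$ binomials and whose numerator inherits the at most $U$ monomials of $L$ (just $\prod_i t_i^{k_i}$ up to scalar in the usual case $U=1$); summing over the $\le\binom{d+r}{r}$ vectors $\mathbf k$ gives the claim. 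I expect no deep obstacle: the substance is bookkeeping --- keeping the $d>0$ reduction and the modulus shift $d\mapsto d+1$ consistent, using regularity of $F$ (Theorem \ref{theo-RegularExp}) to bound both the cost of S4 and the monomial count in (iv), and correctly merging the factors of $A$ with the substitution $y_i=t_i/(1-t_i)$; the only mild subtleties are the $r=0$ case, for which Theorem \ref{theo-GTodd} is not literally stated, and the borderline prime $p=d_1$ (when $d_0=0$), which is still fine because computing modulo $s^{d+1}$ only uses inverses of integers $\le d<p$.
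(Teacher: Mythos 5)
Your proposal is correct and follows essentially the same route as the paper's (much terser) proof: the decomposition $s^dG(s)=A\,E(s)F(s)$, the convolution formula \eqref{eq-ctGs}, the dominance of Step S3 via Theorem \ref{theo-RegularExp}, and the count $\binom{d+r}{r}$ of exponent vectors after $y_i=t_i/(1-t_i)$. You are right that Theorem \ref{theo-GTodd} as stated only covers $r\ge 1$, and your ad hoc patch for $r=0$ (logarithm via Corollary \ref{cor-add-similar} with $h$ independent of $y$, then Theorem \ref{theo-Nexp}) is valid; the paper instead delegates $r=0$ to Remark \ref{rem-caser0}/Theorem \ref{theo-cor-todd}, which additionally exploits the evenness of $H(s)$ to halve the work.

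One slip in your closing parenthetical: the claim that ``the borderline prime $p=d_1$ (when $d_0=0$)\ldots is still fine because\ldots $d<p$'' is not right. When $d_0=0$ one has $d=d_1$, so $p=d_1$ forces $d=p$, and the exponential recursion $f_n=\tfrac1n\sum\cdots$ modulo $s^{d+1}$ needs $1/d=1/p$, which does not exist in $\Z_p$; likewise $E(s)$ needs $1/d_1!$. This is exactly why the hypothesis is the strict inequality $p>d_1$, not $p\ge d_1$. Under the stated hypothesis your argument is fine, since $p>d_1\ge d$ gives $d<p$ as needed.
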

\begin{proof}
The condition on \( B_0 \) ensures the validity of \( A \) when modulo \( p \). The condition \( p > d_1 \) guarantees the validity of \( E(s) \) and \( F(s) \).

For the first part, Step (S3) dominates the computational time. The time complexity is derived from \eqref{eq-ctGs} and Theorem \ref{theo-RegularExp}, leveraging the fact that \( F(s) \mod{s^{d+1}} \) is regular in \( \mathbf{y} \). The remaining part follows straightforwardly by setting \( y_i = t_i/(1-t_i) \), and we use the fact that the number of nonnegative integer solutions to \( k_1+k_2+\cdots+k_r \leq d \) is equal to \( \binom{d+r}{r} \).
\end{proof}

\begin{rem}\label{rem-caser0}
  The case $r=0$ should be specially treated, due to  its relevance in lattice point counting problems.
   In this scenario, the function $G(s)$ in \eqref{eq_Gtype} reduces to
\begin{align*}
G(s)&=A\cdot \frac{e^{-as}L(e^s)}{s^{d_1}} \cdot F(s;a,B_0,\bar{B}_0),
\end{align*}
where $A=(-1)^{|\bar B_0|+|B_0|}\frac{\prod_{b \in \bar{B}_0}b}{\prod_{b \in B_0} b}$, $d_1=|B_0|-|\bar{B}_0|$
and we select $a=\frac{p_1(\bar B_0)-p_1(B_0)}{2} $ to ensure that
$$ F(s;a,B_0,\bar{B}_0)=e^{as}
 \frac{\prod_{b\in B_0} f(bs)}{\prod_{b\in \bar B_0} f(b s)}$$
is an even function.
We shall employ Theorem \ref{theo-cor-todd} to compute $F(s) \xmod{s^{d+1}}$ instead.
The computational complexity in this case is $O\Big(d\log d+\log^2(d)(|B_0|+|\bar{B}_0|)\Big)$.
\end{rem}

\section{Application to MacMahon's Partition Analysis}\label{s-ApplicationMPA}
In this section, we explain how the constant term of generalized Todd type arise naturally in MPA. Indeed, the basic problem (Problem \ref{p-prob} below) can be transformed into constant term of generalized Todd type.
Thus Algorithm CTGTodd applies to give a significant speed up of existing algorithms like CTEuclid and LattE.

\subsection{The Basic Problem}
The core problem of MPA is to compute the constant term in $\Lambda=\{\lambda_1=x_{m+1},
\dots, \lambda_\ell=x_{m+\ell}\}$ of an Elliott rational function $E=E(x_1,x_2,\dots, x_{m+\ell})$ written in the form:
\begin{equation}\label{f-Elliott}
E = \frac{\text{ a Laurent polynomial}}{
\prod_{j=1}^n (1-\text{Monomial}_j)}=\frac{L}{
\prod_{j=1}^n (1-M_j)}=L\prod_{j=1}^n \Big(\sum_{k\ge 0} M_j^k\Big),
\end{equation}
where we need to assume the convergence of the product, and
 $\CT_{\Lambda} E$ means to  extract all terms free of the $\lambda_i$'s
 in the formal series expansion of $E$.

Many algorithms have been developed. See \cite{de2004effectiveLattE,xin2015euclid,andrews2001macmahonOmegaalgorithm2} and references therein.  We only discuss two of them: one is the Barvinok's algorithm \cite{barvinok1994polynomialalgorithm}
in Computational Geometry, with an implementation by the LattE package in C-language; the other is the
CTEuclid algorithm \cite{xin2015euclid} in Algebraic Combinatorics implemented in Maple. The first step in the CTEuclid algorithm is to add the slack variables
$z_1,z_2,\dots,z_n$ to avoid the multiple roots problem.

\def\d{\bar{d}}

Both algorithms reduces the constant term to the following basic problem.
Let $\d$ be equal to $n-\ell$ and be called the dimension of the problem.

\begin{prob}[Basic Problem]\label{p-prob}
Given  an Elliott-rational function $Q=Q(x_1,\dots x_m; z_1,$ $\dots,z_n)$ written as
a short sum as input:
\begin{align}\label{eq-Bigsum}
 Q= \sum_{i=1}^{N}  \frac{L_i(x_1,\dots x_m;z)}{(1-M_{{i1}}z^{\beta_{i1}})(1-M_{{i2}}z^{\beta_{i2}})\cdots (1-M_{{i\d}}z^{\beta_{i\d}})},
\end{align}
where the $M_{ij}$'s are monomials in $x_1,\dots x_m$, the $L_i$'s are Laurent polynomials with at most $U$ monomials, the $\beta_{ij}$'s are integral vectors.
 Compute its limit at $z_i=1,\ 1\leq i \leq n$, i.e., $Q(x;\mathbf{1})=Q(x;1,\dots,1)$, which is known in advance to be well defined.
\end{prob}

Before go ahead, let us explain this problem by a simple example.
\begin{exam}\label{exam-square}
Let
$P$ be the polytope in the plane with vertices
$(0,0),(1,0),(0,1),(1,1)$. Then $nP$ is the dilated polytope with with vertices
$(0,0),(n,0),(0,n),(n,n)$. Compute the number $i_P(n)$ of lattice points in $nP$ and its generating function $I_P(t)=\sum_{n\ge 0} i_P(n) t^n$.
\end{exam}
\begin{proof}[First Solution]
Geometers like to use Brion's theorem \cite{brion1988points} to write the generating polynomial as a short sum as follows
\begin{align}\label{eq-square}
  Q_n=\frac{1}{(1-x)(1-y)} +\! \frac{x^n}{(1-x^{-1})(1-y)} +\! \frac{y^n}{(1-x)(1-y^{-1})} +\!\frac{x^ny^n}{(1-x^{-1})(1-y^{-1})}.
\end{align}
Taking limit at $x,y\to 1$ gives the desired answer $i_P(n)=(n+1)^2$.

We can also consider the generating function

\begin{align}\label{eq-seriessquare}
  Q(t;x,y)= \sum_{n\ge 0} Q_n& t^n
   = \frac{1}{(1-x)(1-y)(1-t)} + \frac{1}{(1-x^{-1})(1-y)(1-x t)}\\
   &+ \frac{1}{(1-x)(1-y^{-1})(1-yt)} +\frac{1}{(1-x^{-1})(1-y^{-1})(1-x y t)}\notag.
\end{align}
Taking limit at $x,y\to 1$ gives $I_P(t)=\frac{1+t}{(1-t)^3}$.
\end{proof}

However, for complicated problems, the $Q$ might be a sum of millions of simple rational functions, so that
taking limit is not an easy task.

\subsection{Solving the Basic Problem by Algorithm CTGTodd} \label{subsec-GTodd}
Let us see how to compute $Q(t_1,\dots, t_m;\mathbf{1})$ in two major steps over $\Q$:
\begin{enumerate}
  \item[Step 1] Choose a valid vector $\gamma$ and make the substitution
$z_j\rightarrow \kappa^{\gamma_j}$ for all $j$. Then we have only one slack variable $\kappa$.

  \item[Step 2] Make the substitution $\kappa=e^s$ and take constant term in $s$ for each term. 
\end{enumerate}

Step 1 seems unavoidable for two reasons: i) direct substitutions of the $z$'s by $1$ does not work for possible denominator factors like $1-z_1z_2$ in some of the terms;
ii) combining the terms to a single rational function might result in a monster numerator that can not be handled by the computer.

A valid vector $\gamma$ must be picked such that there is no
zero denominator in any term. Let $S=\{\beta_{ij}: M_{ij}=0\}$.
Then we need the inner product $\langle \gamma, \beta_{ij}\rangle \neq 0$ for all $\beta_{ij}\in S$.
Barvinok showed that such $\gamma$ can be picked in polynomial time by choosing points
on the moment curve. De Loera et al. \cite{de2004effectiveLattE} suggested using random vectors to avoid large integer entries.

Working over $\Q$ may meet long digit integer problem for complicated problems. CTEuclid used modulo  $p$ computation by choosing a reasonably large prime $p$.

We also use the above two steps  to solve the basic problem using modulo $p$ computation. In Step 2, we apply Algorithm CTGTodd. The new points are: i) We find a hidden condition on $p$; ii) We give the first reasonable complexity result.

The condition on $p$ hides in Step 2. We use Laurent series expansion to compute $Q(x_1,\dots x_m; \kappa^{\gamma_1},\dots, \kappa^{\gamma_n})|_{\kappa=1}$, where
$$Q(x_1,\dots x_m; \kappa^{\gamma_1},\dots, \kappa^{\gamma_n}) = \sum_{i=1}^N \frac{L_i(x_1,\dots x_m; \kappa^{\gamma_1},\dots, \kappa^{\gamma_n})}{\prod_{j=1}^{\d}(1-\kappa^{\langle \gamma, \beta_{ij}\rangle}M_{ij})}.$$
By making the exponential substitution $\kappa=e^s$, we arrive at
$$Q(x_1,\dots x_m;e^s) = \sum_{i=1}^N  \CT_s \frac{L_i(x_1,\dots x_m; e^s)}{\prod_{j=1}^{\d}(1-e^{s\langle \gamma, \beta_{ij}\rangle}M_{ij})}.$$
This is exactly a sum of constant terms of generalized Todd type in which the case all $\bar{B}_i$ are empty.

In order to apply Algorithm CTGTodd, we need to choose a suitable $p$ so that
a valid $\gamma$ can be found easily. Now the condition
on $\gamma$ becomes $\langle \gamma, \beta_{ij}\rangle \not\equiv 0 \pmod{p}$,
because if otherwise, the corresponding denominator factor will
become $1-\kappa^{kp} \to  1-e^{kps}$, which is $o(s^p)$ when modulo $p$.
See also the condition in Proposition \ref{prop-CTG}. Thus $\gamma$ is a vector in
$\Z_p^n$ that does not belong to $|S|\le N \d$ hyper-planes.
The existence of $\gamma$ is guaranteed when $p$ tends to infinity,
but we need a good bound on $p$.

We claim that if $p\approx |S|/k$, then the probability of a random vector $\gamma$ to be valid is roughly $e^{-k}$. This estimate is based on the fact that for any $\beta_{ij}\in S$, the probability for $\gamma$ to satisfy $\langle \gamma, \beta_{ij}\rangle \not\equiv 0 \pmod{p}$ is clearly $1-1/p$ (except the rare case
when each entry of $\beta_{ij}$ is a multiple of $p$).
Assume the independence of the $|S|$ conditions. Then the probability for a random $\gamma$ to be valid is roughly
$$(1-p^{-1})^{|S|}=\left[(1-p^{-1})^p\right]^{|S|/p} \approx e^{-k}.$$
This means if we choose $p>|S|/3$ then we can expect to obtain a valid $\gamma$
by trying $20\approx e^3$ random vectors.

\begin{prop}
Follow notation above. If $p=O(|S|)$ is a prime, then it is suitable and Problem \ref{p-prob}
for the $m\geq 1$ case can be solved by the above process in time $O(N \d^{\d+1} \log(\d) )$.
\end{prop}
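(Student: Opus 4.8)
The plan is to carry out Steps~1 and~2 of Subsection~\ref{subsec-GTodd} explicitly and then sum the per-summand costs supplied by Proposition~\ref{prop-CTG}.

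First I would fix the prime and the direction vector. Following the probability estimate given just above, I would take a prime $p$ with $p=O(|S|)$ and $p>\bar d$; a uniformly random $\gamma\in\Z_p^n$ then misses all $|S|$ forbidden hyperplanes $\langle\gamma,\beta_{ij}\rangle\equiv 0\pmod p$ with probability at least a fixed positive constant, so a valid $\gamma$ is found in $O(1)$ expected trials. Substituting $z_j\mapsto e^{s\gamma_j}$ rewrites the $i$-th summand of \eqref{eq-Bigsum} as $\CT_s G_i(s)$ with $G_i$ of the form \eqref{eq_Gtype}: the numerator is $L_i(x;e^{s\gamma_1},\dots,e^{s\gamma_n})$, a Laurent polynomial in $e^s$ with at most $U$ monomials; every $\bar B_i$ is empty; and the $\bar d$ denominator factors $1-e^{s\langle\gamma,\beta_{ij}\rangle}M_{ij}$ are sorted into $B_0$ (the indices with $M_{ij}=1$) and into $B_1,\dots,B_r$, grouped by the distinct nonconstant monomials among the $M_{ij}$. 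In particular $r\le\bar d$ and $\sum_{j=0}^{r}|B_j|=\bar d$ for every summand, with $r\ge 1$ possible precisely because $m\ge 1$.

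Second I would verify that $p$ is suitable for each $G_i$ in the sense of Algorithm~CTGTodd: since $\bar B_0=\emptyset$ one has $d_1=|B_0|\le\bar d<p$, and validity of $\gamma$ forces $\langle\gamma,\beta_{ij}\rangle\not\equiv 0\pmod p$ on every pole factor, so $B_0$ contains no multiple of $p$. Hence Proposition~\ref{prop-CTG} applies to each $G_i$ and computes $\CT_s G_i(s)$ in time $O\big((r_i+1)\,d_i^{\,r_i+1}\log d_i+\bar d\log^2 d_i\big)$, where $d_i=-\theta_s(G_i)$.

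The crux, and the step I expect to be the real work, is to bound this per-summand cost by $O(\bar d^{\,\bar d+1}\log\bar d)$. When $d_i\le 0$ the constant term is the trivial evaluation $G_i(0)$ and costs nothing of consequence, so the case to treat is $d_i\ge 1$. Then $d_i\le d_1=|B_0|$, so $|B_0|\ge 1$; hence at most $\bar d-1$ of the $\bar d$ factors carry a nonconstant monomial, which yields the two simultaneous bounds $r_i+1\le\bar d$ and $d_i\le|B_0|\le\bar d-r_i$. Combining them,
\[
(r_i+1)\,d_i^{\,r_i+1}\ \le\ \bar d\,(\bar d-r_i)^{\,r_i+1}\ \le\ \bar d\cdot\bar d^{\,\bar d}\ =\ \bar d^{\,\bar d+1},
\]
while $\bar d\log^2 d_i=O(\bar d\log^2\bar d)$ is of lower order. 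Summing over the $N$ summands then gives $O(N\bar d^{\,\bar d+1}\log\bar d)$; the remaining operations — selecting $\gamma$, performing the substitutions, and assembling the $N$ outputs — are polynomial in the input and do not affect this leading term. The main obstacle is precisely the estimate above: the crude bounds $r_i\le\bar d$ and $d_i\le\bar d$ only give $(\bar d+1)\bar d^{\,\bar d+1}$, which overshoots by a factor of order $\bar d/\log\bar d$; what closes the gap is the combinatorial observation that in the nontrivial case a pole factor must be present, so $r_i$ and $d_i$ are lowered simultaneously.
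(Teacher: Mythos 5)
Your proof is correct and follows essentially the same route as the paper's: both reduce each summand to a constant term of GTodd type, invoke Proposition~\ref{prop-CTG}, and then exploit the constraint $d_i+r_i\le\bar d$ to bound the per-term cost by $O(\bar d^{\,\bar d+1}\log\bar d)$; you simply make explicit the optimization step that the paper leaves with ``it is easy to obtain the complexity.'' (One cosmetic slip in your closing remark: the crude bounds $r_i\le\bar d$, $d_i\le\bar d$ give $(\bar d+1)\bar d^{\,\bar d+1}$, so the overshoot is a factor of order $\bar d$, not $\bar d/\log\bar d$; this does not affect the argument.)
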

\begin{proof}
The first assertion is obvious. The proof of the second assertion is given below.

Step 1 tries $O(1)$ random vectors $\gamma$ to find a valid one. Each try cost at most $O(|S|n)$ operations in $R$ for testing if $\gamma$ is valid. Thus the cost for this step is $O(|S|n)$.

In step 2, each term is transformed to a constant term of GTodd type. When applying Algorithm CTGTodd, we need to clarify the parameter $d$ and $r$ for each term. These parameters may be different, but they must satisfy $d+r \leq \d$. Assuming the worst case when $|B_0| = d$ and for $i \geq 1$, each $|B_i|$ is equal to one, we have
$r=\d -d$. Thus the complexity is $O((\d-d+1) d^{\d-d+1}\log(d)+ \log^2(d) \d )$
by Proposition \ref{prop-CTG}. From here, it is easy to obtain the complexity in the proposition.
We report that computer experiment suggests that the maximum of $(\d-d+1) d^{\d-d+1}\log(d)$ is attained roughly at $d= \d/3$.
\end{proof}

Let us revisit Example \ref{exam-square}.
\begin{proof}[Second Solution of Example \ref{exam-square}]
For \eqref{eq-square},
instead of taking limit at $x,y \rightarrow 1$, we
set  $x=y=\kappa =e^s$:
$$Q_n(e^s)=\frac{1}{(1-e^s)^2} + \frac{e^{ns}}{(1-e^{-s})(1-e^s)}+\frac{e^{ns}}{(1-e^s)(1-e^{-s})} +\frac{e^{2ns}}{(1-e^{-s})^2}. $$
Taking constant term in $s$ for each term gives
\begin{align*}
   i_P(n)=\CT_s Q_n(e^s) & =\CT_s\frac{1}{(1-e^s)^2} + \CT_s\frac{2e^{ns}}{(1-e^{-s})(1-e^s)} +\CT_s\frac{e^{2ns}}{(1-e^{-s})^2} \\
   & =[s^2]\frac{s^2}{(1-e^s)^2} + [s^2]\frac{-2e^{ns} s^2}{(1-e^{-s})(1-e^s)} +[s^2]\frac{e^{2ns} s^2}{(1-e^{-s})^2}\\
   &=\frac{5}{12}+\Big(\frac{1}{6}-n^2\Big)+ \Big({\frac {5}{12}}+2\,n+2\,{n}^{2}\Big)= 1+2n+n^2.
\end{align*}

We can also compute the generating function in a similar way. For \eqref{eq-seriessquare}  We have
\begin{align*}
 Q(t;e^s)=&\frac{1}{(1-e^s)^2(1-t)} +\frac{1}{(1-e^{-s})(1-e^s)(1-e^{s}t)} \\
 +&\frac{1}{(1-e^{-s})(1-e^s)(1-e^{s}t)} +\frac{1}{(1-e^{-s})^2(1-e^{2s}t)}.
\end{align*}
Taking constant term in $s$ for each term gives
\begin{align*}
 I_P(t)=\CT_s Q_n(t;e^s)&=\CT_s\frac{1}{(1-e^s)^2(1-t)} + \CT_s\frac{1}{(1-e^{-s})(1-e^s)(1-e^{s}t)} \\
   & + \CT_s\frac{1}{(1-e^{-s})(1-e^s)(1-e^{s}t)}+\CT_s\frac{1}{(1-e^{-s})^2(1-e^{2s}t)}\\
   &={\frac {5}{12\left(1-t\right)}}+ {\frac{5{t}^{2}+8t-1}{12\left(-1+t\right)^{3}}}+ {\frac{5{t}^{2}+8t-1}{12\left(-1+t\right)^{3}}}-{\frac{5{t}^{2}+38t+5}{12\left( -1+t \right)^{3}}}\\ &=\frac{1+t}{(1-t)^3}.
\end{align*}
\end{proof}

\subsection{Lattice Point Counting}
The special case of \( m = 0 \) in Problem \ref{p-prob} is intimately connected to the problem of counting lattice points within a polytope.
We present the following complexity result.
\begin{prop}
Suppose \( f(P;\mathbf{y}) \) is defined as in \eqref{Barvinok-short-sum}, with \( N = |I| \). For a prime \( p = O(N \d) \), the computation of \( f(P;\mathbf{1}) \mod{p} \) can be achieved in time \( O(N \log^2(\d)\d) \).
\end{prop}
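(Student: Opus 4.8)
The plan is to recognize the $m=0$ case as the specialization of the two-step reduction of Subsection~\ref{subsec-GTodd} in which no $x$-variables occur, and then to count term by term using the sharp $r=0$ estimate of Remark~\ref{rem-caser0}. Viewing \eqref{Barvinok-short-sum} in the shape of Problem~\ref{p-prob}, the slack variables are $z_j=y_j$ for $1\le j\le\bar d$, and each of the $N$ summands has exactly $\bar d$ denominator binomials $1-z^{v_{ij}}$ and a single-monomial numerator $\epsilon_i z^{u_i}$. First I would choose a valid vector $\gamma$ (Barvinok's moment-curve choice, or a random vector as in \cite{de2004effectiveLattE}), substitute $z_j\to\kappa^{\gamma_j}$ so that only the slack variable $\kappa$ survives, and then put $\kappa=e^s$. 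This rewrites
$$ f(P;\mathbf{1})\equiv \sum_{i=1}^N \CT_s \frac{\epsilon_i\, e^{s\langle\gamma,u_i\rangle}}{\prod_{j=1}^{\bar d}\bigl(1-e^{s\langle\gamma,v_{ij}\rangle}\bigr)}\pmod p, $$
where each summand is a constant term of generalized Todd type \eqref{eq_Gtype} with $r=0$, $\bar B_0=\emptyset$, $|B_0|=\bar d$, $U=1$, and $L(e^s)=\epsilon_i e^{s\langle\gamma,u_i\rangle}$.

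Next I would nail down the parameters. Since $L$ is a monomial, $d_0=\theta_s(L(e^s))=0$, so $d=-\theta_s(G(s))=|B_0|-|\bar B_0|=\bar d$ and every constant term is non-trivial. The validity requirement on $\gamma$ is exactly $\langle\gamma,v_{ij}\rangle\not\equiv 0\pmod p$ for all $i,j$; with $S=\{v_{ij}\}$, $|S|\le N\bar d$, this is precisely the hypothesis that makes $p$ \emph{suitable} for every summand in the sense of Proposition~\ref{prop-CTG} (the conditions $p>d_1=\bar d$ and $0\notin B_0$ are then automatic). By a Bertrand-type argument a prime $p=O(N\bar d)$ with $p>|S|/3$ exists, and the probabilistic estimate of Subsection~\ref{subsec-GTodd} produces a valid $\gamma$ in $O(1)$ random trials. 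For each summand I would then invoke Remark~\ref{rem-caser0}: after the shift $a=-p_1(B_0)/2$ (which leaves $L$ a single monomial) and an application of Theorem~\ref{theo-cor-todd}, one constant term is obtained in $O(\bar d\log\bar d+\log^2(\bar d)\,\bar d)=O(\bar d\log^2\bar d)$ operations in $\Z_p$, the closing convolution $A\sum_{n=0}^{d}E_nF_{d-n}$ contributing only $O(\bar d)$ more. Summing over the $N$ summands gives the asserted $O(N\bar d\log^2\bar d)$.

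The routine residue is the preprocessing: testing a candidate $\gamma$ and forming the multisets $B_0$ use the at most $N\bar d$ inner products $\langle\gamma,v_{ij}\rangle\bmod p$, i.e.\ $O(N\bar d^2)$ ring operations, which is subsumed in the (fixed-dimension) regime of interest and is in any case a one-time cost separate from the Todd arithmetic. The step I expect to demand the most care is the bookkeeping that simultaneously ties \emph{$\gamma$ valid} to \emph{$p$ suitable for all $N$ terms}, so that one prime $p=O(N\bar d)$ at once (i) admits a valid $\gamma$, (ii) exceeds every $d_1=\bar d$, and (iii) turns all scalars entering $A$, $E(s)$ and $F(s)$ into units of $\Z_p$; granting this, the complexity bound is an immediate corollary of Remark~\ref{rem-caser0} applied $N$ times.
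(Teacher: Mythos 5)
Your proposal is correct and takes essentially the same route as the paper's own proof: pick a valid $\gamma$ for a suitable prime $p=O(N\bar d)$, substitute $z_j\to\kappa^{\gamma_j}$ and $\kappa=e^s$ to turn each of the $N$ Brion summands into an $r=0$ Todd-type constant term, and invoke Remark~\ref{rem-caser0} to get $O(\bar d\log^2\bar d)$ per term, hence $O(N\bar d\log^2\bar d)$ in total. Your bookkeeping is in fact slightly more careful than the paper's — in particular noting the $O(N\bar d^2)$ cost of testing $\gamma$ (which the paper quotes only as $O(N\bar d)$) and observing that it is absorbed only in the fixed-dimension regime — but the decomposition, the key lemma, and the final count are the same.
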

\begin{proof}
Given \( p = O(N \d) \), we can select a valid \( \gamma \) in time \( O(N \d) \). This allows us to perform the substitution and arrive at a sum of \( N \) constant terms of the Todd type. By Remark \ref{rem-caser0}, the complexity of each constant term is \( O(\d \log(\d) + \log^2(\d)\d) \) over \(\mathbb{Z}_p\).
In summary, the total complexity is \( O(N \log^2(\d)\d) \).
\end{proof}
When computing \( f(P;\mathbf{1}) \), LattE's default method involves identifying a valid \( \gamma \) and then utilizing the substitution \( \kappa \to (1 + s) \). This traditional approach results in a complexity of \( O(\d^3) \) for each term, leading to a total complexity of \( O(N \d^3) \) over \(\mathbb{Q}\). However, this method may encounter issues with large integer numbers. A complexity result, approximately as high as \( O(N \d^6 \log^3(\d)) \), was presented in \cite{de2009ehrhartTodd} by considering the binary encoding length of the rational numbers involved.

If the precise value of \( f(P;\mathbf{1}) \) is of interest, a sound strategy would be as follows: first, provide a reasonable estimate for \( f(P;\mathbf{1}) \), and then utilize our method to compute \( f(P;\mathbf{1}) \mod{p_i} \) for some large primes \( p_i \). Finally, reconstruct \( f(P;\mathbf{1}) \) using the Chinese remainder theorem.

\subsection{Ehrhart Series for Magic Squares}\label{App-MagicSquares}
Consider a bounded rational polytope \( P = \{\alpha : A\alpha = b, \alpha \geq 0\} \subset \mathbb{R}^n \), the function
$$ i_P(k) := \# (kP \cap \mathbb{Z}^n) = \# \{\alpha \in \mathbb{Z}^n : A\alpha = kb, \alpha \geq 0 \}$$
for any positive integer \( k \) was first analyzed by E. Ehrhart \cite{ehrhart1962polyedres}. It is referred to as the Ehrhart polynomial when the vertices of \( P \) are integral and is known as the Ehrhart quasi-polynomial for arbitrary rational convex polytopes \cite[Ch. 4]{stanley2011EC1}.
The Ehrhart series of \( P \), denoted by
$$I_P(t) := \sum_{k\ge 0} i_P(k) t^k,$$
is recognized as an Elliott-rational function in the single variable \( t \).

A representation of the constant term of \( I_P(t) \) is well-established. Consequently, we can employ the constant term method to generate a big sum as in \eqref{eq-Bigsum} with \( m = 1 \) and compute \( I_P(t) \) directly. This approach appears superior to the conventional method that relies on Lagrange's interpolation.

Here, we confine our attention to the magic square polytope \( MS_n \) of order \( n \), which is defined by the following linear constraints:

\begin{align*}
  a_{i,1}+a_{i,2}+\cdots+a_{i,n}=1,   \text{ for }1\le i\le n \\
  a_{1,j}+a_{2,j}+\cdots+a_{n,j}=1,    \text{ for }1\le j\le n \\
  a_{1,1}+a_{2,2}+\cdots +a_{n,n}=1, \ a_{n,1}+a_{n-1,2}+\cdots +a_{1,n}=1.
\end{align*}

In other words, the lattice points in \( s MS_n \) are referred to as order \( n \) magic squares with magic sum \( s \). These are \( n \times n \) nonnegative integer matrices \( M = (a_{i,j})_{n \times n} \) whose row sums, column sums, and both diagonal sums are all equal to \( s \).

The Ehrhart series for \( MS_n \) is given by
$$ I_{MS_n}(t) = \CT_{\lambda, \mu, \nu} \prod_{1 \leq i,j \leq n} \frac{1}{1 - \lambda_i \mu_j \nu_1^{\chi(i = j)} \nu_2^{\chi(i + j = n + 1)}} \times \frac{1}{1 - t(\lambda_1 \cdots \lambda_n \mu_1 \cdots \mu_n \nu_1 \nu_2)^{-1}}. $$
Up to this point, the Ehrhart series \( I_{MS_n}(t) \) has only been computed for \( n \leq 6 \). The computation of \( I_{MS_6}(t) \) was initially performed by the CTEuclid package in \cite[Section 5.2]{xin2015euclid}.

With the Todd computation developed in this paper, we can recompute \( I_{MS_6}(t) \) with a substantial speedup.
The computation by CTEuclid was divided into two steps: i) by taking the constant term in \( \lambda, \mu, \nu \), we obtain
$$ I_{MS_6}(t) = \sum_i Q_i(t, e^s) \Big|_{s=0} = \sum_i \CT_s Q_i(t; e^s); $$
ii) compute these constant terms separately to obtain \( I_{MS_6}(t) \) for each of the three large prime numbers \( p_1, p_2, p_3 \), and reconstruct \( I_{MS_6}(t) \) using the Chinese remainder theorem.
In the second step, the running time was approximately \( 70 \) days for each \( p_j \). We utilize our log-exponential technique to expedite the second step, employing the same \( Q_i \)'s and \( p_j \)'s in Maple. The running time is reduced from \( 70 \times 3 \) days to approximately \( 3 \times 1 \) days. The running time would be further reduced if we were able to implement the ideas outlined in Subsection \ref{ssec-complexity}.

\section{Application to ILP Utilizing Barvinok's Counting Algorithm}\label{s-ApplicationILP}
In this section, we delve into the realm of integer linear programming (ILP), denoted as \( \max(c,P) \), which is defined as follows:
\[
\max(c,P) := \max \left\{ c \cdot x : x \in P \cap \mathbb{Z}^{\d} \right\}, \quad P = \left\{ x \in \mathbb{R}^{\d} : Ax \leq b, x \geq 0 \right\},
\]
where \( A \in \mathbb{Z}^{m \times \d} \), \( b \in \mathbb{Z}^m \), and \( c \in \mathbb{Z}^{\d} \) constitutes the cost vector. We presume that the polytope \( P \) is both bounded and non-empty, rendering it a discrete counterpart to the well-established linear programming (LP) paradigm. The study of ILP is foundational and underpins numerous crucial applications. For a comprehensive overview of ILP, readers are directed to the seminal works \cite{wolsey1999integer,wolsey2020integer,schrijver1998theory}.

\subsection{The BSCT Algorithm: A Binary Search Technique Leveraging Constant Term Manipulations}\label{s-sub-BSCT}
We introduce a polynomial-time algorithm, BSCT, for addressing ILP when the dimension \( \d \) is fixed. This algorithm employs Barvinok's short rational function \( f(P;\mathbf{y}) \), as presented in \eqref{Barvinok-short-sum}. Throughout this discussion, when we refer to efficient computation, we imply that the running time and the number of terms \( |I| \) are both polynomially bounded in terms of the input size.

Our algorithm is different from the  two existing algorithms developed for this purpose in the literature: Lasserre's heuristic in \cite{lasserre2004integer} and its enhanced variant, called the digging algorithm in \cite{de2004three}. Additionally, it differs from the binary search method proposed by Barvinok in \cite{barvinokPommersheim1999summary}.

Our algorithm's fundamental approach involves two distinct steps. Firstly, it seeks to identify \( \max(c,P) \), the maximum value attainable by the cost vector \( c \) within the polytope \( P \). Subsequently, it aims to determine the vector \( \alpha \) that achieves this maximum. This method contrasts with the digging algorithm, which concurrently searches for both \( \alpha \) and \( \max(c,P) \). However, in our current implementation, we have limited our focus to the initial step.

In the course of our development, it is advantageous to examine the following equivalent problem:
\[
\min(c,P) := \min \left\{ c \cdot x : x \in P \cap \mathbb{Z}^{\d} \right\}, \quad P = \left\{ x \in \mathbb{R}^{\d} : Ax \leq b, x \geq 0 \right\}.
\]
It is evident that \( \min(c,P) = -\max(-c,P) \), thus establishing a direct relationship between the minimum and maximum objectives.

Let us delineate the operational framework of the digging algorithm. For a given cost vector \( c \), we substitute \( y_k = y_k t^{c_k} \) into \( f(P;y) \) to yield
$$g(P;\mathbf{y},t) = \sum_{i\in I} g_i(\mathbf{y},t) = \sum_{i\in I} \epsilon_i \frac{y^{u_i}t^{cu_i} }{\prod_{j=1}^{\d} (1-y^{v_{ij}} t^{cv_{ij}} )},$$
which, though a Laurent polynomial, can be treated as a Laurent series in \( t \). Consequently,
$$\min(c,P) = \theta_t(g(P;\mathbf{y},t)) \ge \m = \min \left\{ \theta_t g_i(t) \right\},$$
where \( \theta_t g(t) \) represents the order of the Laurent series \( g(t) \) in \( t \), as defined in Section \ref{s-ConstantTermsGTodd}.

If the easily computed coefficient \([t^m] g(P;y,t) \) is nonzero, then \( \m = \min(c,P) \), satisfying Lasserre's criterion;
Otherwise if \([t^m] g(P;y,t) \) is zero, one must ascertain the least degree term, which encapsulates the essence of the digging algorithm as described.

Our contention is encapsulated in the following theorem.
\begin{theo}\label{theo-opt}
For any given polytope \( P \) and cost vector \( c \), there exists a polynomial-time algorithm that can express
\begin{align}\label{eq-opt}
g(P;\mathbf{1},t) = \sum_{i \in I'} h_i(t) = \sum_{i \in I'} \varepsilon'_i \frac{t^{a_i}}{\prod_{1 \le j\le d_i}(1-t^{b_{i,j}}) },
\end{align}
where \( \varepsilon'_i \in \mathbb{Q} \), \(b_{i,j}>0,\; d_i \leq \d \) for all \( i \), with the convention \(\prod_{1 \le j\le 0}(1-t^{b_{i,j}}) := 1\). Moreover, \( \min(c,P) \) can be determined in polynomial time through a binary search algorithm.
\end{theo}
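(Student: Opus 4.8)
\medskip
\noindent\emph{Proof plan.}\quad
The plan is to produce the representation \eqref{eq-opt} by ``dispelling'' the variables $\mathbf{y}$ from Barvinok's short sum --- exactly the two-step procedure of Subsection \ref{subsec-GTodd} --- and then to locate $\theta_t$ by a \emph{monotone} binary search. Concretely, I would start from \eqref{Barvinok-short-sum}, of polynomial size $N=|I|$, substitute $y_k\mapsto y_k t^{c_k}$ to obtain $g(P;\mathbf{y},t)=\sum_{i\in I}\epsilon_i\frac{y^{u_i}t^{cu_i}}{\prod_{j=1}^{\d}(1-y^{v_{ij}}t^{cv_{ij}})}$, and record that $g(P;\mathbf{1},t)=\sum_{\alpha\in P\cap\Z^{\d}}t^{c\alpha}$, a Laurent polynomial with nonnegative integer coefficients whose $t$-order is $\min(c,P)$. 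Termwise evaluation at $\mathbf{y}=\mathbf{1}$ fails because the factors with $cv_{ij}=0$ vanish there, so first I would pick, in polynomial time (points on the moment curve or a random vector, cf.\ \cite{barvinok1994polynomialalgorithm}), a $\gamma\in\Z^{\d}$ lying off the $\le N\d$ hyperplanes $\langle\gamma,v_{ij}\rangle=0$, substitute $y_j\mapsto\kappa^{\gamma_j}$ with $\kappa=e^{s}$, and observe that each term becomes a constant term of generalized Todd type \eqref{eq_Gtype},
$$\CT_s\;\epsilon_i\,t^{cu_i}\,\frac{e^{s\langle\gamma,u_i\rangle}}{\prod_{j=1}^{\d}\bigl(1-e^{s\langle\gamma,v_{ij}\rangle}t^{cv_{ij}}\bigr)},$$
in which $L$ is the single monomial $\epsilon_i t^{cu_i}\kappa^{\langle\gamma,u_i\rangle}$ (so $U=1$ and $d_0=0$), every $\bar B_\ell$ is empty, the distinct values $t^{cv_{ij}}\neq1$ serve as the $t_\ell=t^{m_\ell}$, and $B_0$ (resp.\ $B_\ell$) collects the numbers $\langle\gamma,v_{ij}\rangle$ over the indices $j$ with $cv_{ij}=0$ (resp.\ $cv_{ij}=m_\ell$).

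Since $|B_0|+\sum_\ell|B_\ell|=\d$, Proposition \ref{prop-CTG} (its ``all $\bar B_i$ empty'' case) then writes this constant term as a sum of at most $\binom{d+r}{r}\le\binom{2\d}{\d}=O(1)$ simple rational functions of $t$, each with a one-monomial numerator and a denominator that is a product of at most $d+\sum_\ell|B_\ell|=\d$ binomials $1-t^{m_\ell}$; replacing $1-t^{m}$ by $-t^{-|m|}(1-t^{|m|})$ whenever $m<0$ sweeps all sign and monomial corrections into the numerator, so each piece is genuinely of the form $\varepsilon'\,t^{a}/\prod_{1\le j\le d'}(1-t^{b_j})$ with all $b_j>0$ and $d'\le\d$. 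Summing over $i\in I$ and using linearity of $\CT_s$ --- the singular $s$-parts cancel because $\sum_i g_i(\kappa^\gamma,t)$ is a Laurent polynomial in $\kappa$, hence regular at $\kappa=1$ with value $g(P;\mathbf{1},t)$ --- yields \eqref{eq-opt} with $|I'|=O(N)$ and $d_i\le\d$. Working over $\Q$, this is $O(N)$ applications of Proposition \ref{prop-CTG} with $d,r\le\d$ fixed, hence polynomial time; and the rational data stay of polynomial bit-length, since only a bounded number of $\Q$-operations (involving $1/n!$, $\mathcal{B}_n$, $S(n,k)$ for $n\le\d$ and powers of the polynomially bounded $\langle\gamma,\cdot\rangle$) enters each piece.

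For the second assertion, since $g(P;\mathbf{1},t)$ has nonnegative coefficients, $\min(c,P)=\theta_t g(P;\mathbf{1},t)$ is the least $v$ with $[t^v]g>0$, equivalently the least $v$ at which $C(v):=[t^v]\frac{g(P;\mathbf{1},t)}{1-t}=\sum_{n\le v}[t^n]g$ becomes positive; the point is that $C(\cdot)$ is nondecreasing, which makes a binary search sound. I would run it on $[\m,M]$, where $\m=\min_i a_i\le\min(c,P)$ is read off from \eqref{eq-opt} and $M$ is the optimum of the LP relaxation $\max\{c\cdot x:x\in P\}$, finite since $P$ is bounded and nonempty and of polynomial bit-length by Cramer-type vertex bounds, so $O(\log(M-\m))=O(\mathrm{poly})$ iterations suffice (with $C(M)=0$ signalling integer infeasibility). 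Each iteration evaluates $C(v)=\sum_{i\in I'}\varepsilon'_i[t^v]\frac{t^{a_i}}{(1-t)\prod_{j=1}^{d_i}(1-t^{b_{i,j}})}$, and $[t^v]\frac{t^{a_i}}{(1-t)\prod_{j}(1-t^{b_{i,j}})}$ equals the number of $(k_0,\dots,k_{d_i})\in\Z_{\ge 0}^{d_i+1}$ with $k_0+\sum_j b_{i,j}k_j=v-a_i$, i.e.\ the number of lattice points of a simplex of the fixed dimension $d_i\le\d$, computable in polynomial time by Barvinok's counting algorithm \cite{barvinok1994polynomialalgorithm}; summing the $O(N)$ counts keeps everything polynomial.

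The genuinely non-routine ingredient is this last one: evaluating a single coefficient $[t^v]$ of a rational function whose denominator is a product of a bounded number of factors $1-t^{b}$ when $v$ and the $b$'s are only polynomial in \emph{bit-length} --- precisely fixed-dimensional (restricted-partition / simplex) lattice-point counting, which must be imported from Barvinok's theorem. The other delicate point is the monotonization trick of dividing by $1-t$, without which $[t^v]g$ need not be monotone in $v$ and the binary search would be unsound. The bookkeeping in the first part (that $L$ is a single monomial, that all $\bar B_i$ are empty, and the sign flips that secure $b_{i,j}>0$) is tedious but entirely mechanical once Proposition \ref{prop-CTG} is in hand.
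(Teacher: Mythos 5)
Your proposal is correct and follows essentially the same route as the paper's proof: compute Barvinok's short sum, substitute $y_k\mapsto y_k t^{c_k}$, dispel $\mathbf y$ via the $\gamma$-substitution and $\kappa=e^s$ to reduce each term to a constant term of generalized Todd type, invoke Proposition~\ref{prop-CTG} to obtain \eqref{eq-opt}, and then locate $\theta_t$ by a monotone binary search whose queries are partial sums $\CT_t\, t^{-k}G(t)/(1-t)$ evaluated termwise via Barvinok's counting. The only difference is presentational: you spell out the bookkeeping (identifying $L$, $B_0$, $B_\ell$, the sign flips that enforce $b_{i,j}>0$, and the monotonicity of the cumulative counts) that the paper leaves implicit in its one-line appeal to Proposition~\ref{prop-CTG} and to ``a standard dichotomy.''
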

\begin{proof}
We commence by computing \( f(P;\mathbf{y}) \) using Barvinok's algorithm, thereby obtaining \( g(P;\mathbf{y},t) \). Next, we identify a vector \( \lambda \) as per the methodology outlined in Subsection \ref{subsec-GTodd} to yield
$$g(P;\mathbf{1},t) = \CT_s \sum_{i\in I} \epsilon_i \frac{e^{\lambda u_i s}t^{cu_i} }{\prod_{j=1}^{\d} (1-e^{\lambda v_{ij} s} t^{cv_{ij}} )} = \sum_{i\in I} \epsilon_i \CT_s \frac{e^{\lambda u_i s}t^{cu_i} }{\prod_{j=1}^{\d} (1-e^{\lambda v_{ij} s} t^{cv_{ij}} )}.$$
The first portion of the theorem is then established by Proposition \ref{prop-CTG}.

Regarding the second part, we commence by utilizing LP to resolve \(\max \{c\cdot \alpha : \alpha \in P\} - \m\). This yields an upper bound \( UB \) for the degree in \( t \) of \( G(t) = t^{-\m} g(P;\mathbf{1},t) \), which exclusively possesses nonnegative powers of \( t \). The lower bound \( LB \) is assigned the value of \( 0 \). We then employ a standard dichotomy procedure to either reduce \( UB \) or augment \( LB \) until \( LB = UB \).
To execute the dichotomy procedure, we note that \( G^k = \sum_{i=0}^k G_i \) is efficiently computable. Specifically, we have \( G^k = \CT_t t^{-k} G(t)/(1-t) \), thus
$$G^k = \CT_t t^{-k} G(t)/(1-t) = \sum_{i\in I'} \CT_t t^{-k} \cdot t^{-\m} h_i(t)/(1-t) $$
is efficiently computable (utilizing Barvinok's counting algorithm) for each term. Consequently, for any \( k' > k \), \( G^{k'} - G^k = \sum_{i=k}^{k'-1} G_i \) is efficiently computable and is nonzero if and only if \( G_i > 0 \) for some \( k \leq i < k' \).
\end{proof}
\begin{rem}
The dichotomy procedure proposed by Barvinok necessitates the enumeration of lattice points within \( P(k) = \{\alpha \in P : c\cdot \alpha \geq k\} \) to refine the range for the maximum value of \( c\cdot \alpha \). However, the performance of this approach does not appear to be fast (as evidenced by the BBS column in Table \ref{table-compute time}). A plausible explanation for the slow performance is that the newly defined polytope \( P(k) \) may introduce new vertices, thereby complicating \( P(k) \) relative to \( P \). For instance, in Figure \ref{BBSpic}, the coordinates of \( v_1, v_2 \) might be rational numbers with extended decimal representations, even if \( P \) is integral.
\end{rem}

\begin{figure}[!ht]
$$
  \hskip .1 in \vcenter{ \includegraphics[height=1.4 in]{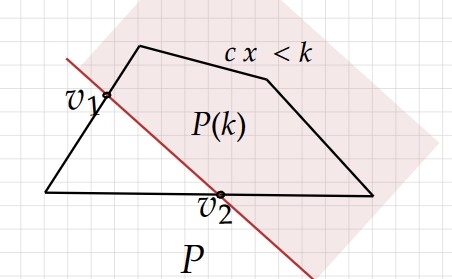}}
$$
\caption{The polytope $P(k)$ and its new vertices $v_1$ and $v_2$.
\label{BBSpic}}
\end{figure}

Our method necessitates the resolution of polynomially many knapsack problems. In the worst-case scenario, these can be addressed in polynomial time by leveraging Barvinok's counting algorithm. In practice, we address scenarios where \( \min(c,P) - \m \) is small, in which case a direct search will swiftly yield a solution. This insight motivates the following variant of the binary search algorithm.

\textbf{Algorithm BSCT}

Input: A bounded polytope \( P \) specified by \( A \) and \( b \), and a cost vector \( c \) as described.

Output: \(\min(c,P)\).

\begin{enumerate}
 \item[S1] Compute \( f(P;\mathbf{y}) \) as a short sum in \eqref{Barvinok-short-sum} using LattE.

 \item[S2] Compute \( g(P;\mathbf{1},t) = \sum_{i \in I'} h_i(t) \) as in \eqref{eq-opt} employing Theorem \ref{theo-opt}.

 \item[S3] Set \( \m = \min \{ \theta_t h_i(t) \} \) and define \( G(t) = t^{-\m} g(P;\mathbf{1},t) = \sum_{i \in I'} t^{-\m} h_i(t) \).

 \item[S4] Compute \( G(t) \xmod{ t^{k_0}}\) with the default value \( k_0 = 2^{16} \) if \( \bar{d} \leq 16 \). If the result is nonzero, then \(\min(c,P) = \theta_t(G(t) \xmod{ t^{k_0}}) \) has been determined.

 \item[S5] If \( G(t) \xmod{ t^{k_0}} \) is zero, iteratively compute \( G^{k_i} \) for \( i = 1, 2, \dots \) with \( k_i = 2^i k_0 \) until \( G^{k_{i'}} > 0 \). Then \( 2^{i'-1}k_0 \leq \min(c,P) - \m \leq 2^{i'}k_0-1 \). Proceed with a standard binary search to ascertain \(\min(c,P)\).
\end{enumerate}
The default value \(k_0 = 2^{16}\) in Step 4 is chosen for its practical efficiency in computing a typical term in \( G(t) \xmod{ t^{k_0}} \):
 \begin{align*}
   h(t) \xmod{ t^{k_0}} &= \frac{t^{a}}{(1-t^{b_1})(1-t^{b_2})\cdots (1-t^{b_{{d}}})} \xmod{ t^{k_0}}\\
    &= \frac{1}{(1-t^{b_1})(1-t^{b_2})\cdots (1-t^{b_{{d}}})} \xmod{ t^{\ell}} \cdot t^a,
 \end{align*}
where $d\leq \d$,  \( b_i \) are positive integers, \( 0 \leq a < k_0 \) and \( \ell = k_0 - a \leq k_0 \). From Example \ref{}, we observe that \( t^{-a}h(t) \xmod{ t^{\ell}} \) can be computed in time \( O(\ell \log(\ell)) \). In practice, \( G(t) \xmod{ t^{k_0}} \) is computed swiftly (utilizing the series command in Maple) when \( k_0 = 2^{16} \) in our computer experiments.

Algorithm BSCT is polynomial in nature when the dimension \( \d \) is fixed. The most computationally intensive step is the iteration process. The number of iterations is bounded by \( 2\log(UB-LB) \). Each iteration requires solving \( |I'| \) (which is polynomial in size) knapsack problems, and each knapsack problem can be solved efficiently.

\subsection{Computational Experiments}

In this section, we present the outcomes of our computational experiments aimed at addressing challenging knapsack problems, as presented in \cite{aardal2002hard}. The data utilized in our experiments is detailed in Table \ref{table-knapsack problems}.
For each row in the table, the objective is to maximize \( c \cdot \mathbf{x} \) subject to the constraints \( \mathbf{a} \mathbf{x} = b \) and \( \mathbf{x} \in \mathbb{N}^{\d} \). For the sake of comparison, the cost vector \( c \) is identical to that used in \cite{de2004three}. Specifically, \( c \) is chosen to be the first \( \d \) components of the vector
$$(213,-1928,-11111,-2345,9123,-12834,-123,122331,0,0).$$

It is crucial to recognize that although maximization is equivalent to minimization, the computational time required for \( \max(c,P) \) and \( \min(c,P) \) can vary significantly for a specific pair \( (c,P) \). We employ the BSCT algorithm to resolve both problems; however, we do not provide the corresponding optimal solutions.

In our implementation of BSCT, the computation of \( G^{k_i} \) in the final step is conducted using the CTDenumerant package developed in \cite{LLLCTEuclid}. This substitution was necessitated by the unfortunate discontinuation of LattE's development. The computational times reported in Table \ref{table-compute time} are generated using Maple, with the exception of the initial computation of \( f(P;\mathbf{y}) \), which is produced by LattE. The timings for the Digging and BBS algorithms are sourced from \cite{de2004three}.

\begin{table}[ht]
\renewcommand{\arraystretch}{1.2}
\resizebox{.8\columnwidth}{!}{
\begin{tabular}{|l|l|l|}
\hline
Problem & \multicolumn{1}{|c}{a}                                           & \multicolumn{1}{c|}{b} \\ \hline
cuww1   & 12223 12224 36674 61119 85569                                    & 89643482               \\ \hline
cuww2   & 12228 36679 36682 48908 61139 73365                              & 89716839               \\ \hline
cuww3   & 12137 24269 36405 36407 48545 60683                              & 58925135               \\ \hline
cuww4   & 13211 13212 39638 52844 66060 79268 \ 92482                        & 104723596              \\ \hline
cuww5   & 13429 26850 26855 40280 40281 53711 \ 53714 \ 67141                  & 45094584               \\ \hline
prob1   & 25067 49300 49717 62124 87608 88025 \ 113673 119169                & 33367336               \\ \hline
prob2   & 11948 23330 30635 44197 92754 123389 136951 140745               & 14215207               \\ \hline
prob3   & 39559 61679 79625 99658 133404 137071 159757 173977              & 58424800               \\ \hline
prob4   & 48709 55893 62177 65919 86271 87692 102881 109765                & 60575666               \\ \hline
prob5   & 28637 48198 80330 91980 102221 135518 165564 176049              & 62442885               \\ \hline
prob6   & 20601 40429 40429 45415 53725 61919 64470 69340 78539     95043  & 22382775               \\ \hline
prob7   & 18902 26720 34538 34868 49201  49531   65167  66800 84069  137179 & 27267752               \\ \hline
prob8   & 17035 45529 48317 48506 86120 100178 112464 115819 125128 129688 & 21733991               \\ \hline
prob9   & 3719  20289  29067 60517 64354  65633  76969 102024 106036 119930 & 13385100               \\ \hline
prob10  & 45276 70778 86911 92634 97839 125941 134269 141033 147279 153525 & 106925262              \\ \hline
\end{tabular}
}

\caption{Knapsack problems.} \label{table-knapsack problems}
\end{table}

\begin{table}[ht]
  \centering
  \resizebox{.9\columnwidth}{!}{
  \begin{tabu}{|c|c|l|l|l|l|[2pt]c|c|}
\hline
 Problem & \begin{tabular}[c]{@{}l@{}}Maximum\\Value\end{tabular}  & \begin{tabular}[c]{@{}l@{}}Runtime for \\BSCT \end{tabular}& \begin{tabular}[c]{@{}l@{}} Runtime for \\Digging(Original)\end{tabular} & \begin{tabular}[c]{@{}l@{}}Runtime for \\Digging(S.Cone)\end{tabular} 			& \begin{tabular}[c]{@{}l@{}} Runtime\\for      	BBS\end{tabular} & \begin{tabular}[c]{@{}l@{}}Minimum\\  Value\end{tabular}  & \begin{tabular}[c]{@{}l@{}}Runtime\\ for BSCT \end{tabular} 	 \\ \hline
    cuww1 & 1562142 & $<$0.01 sec.      		& 0.4 sec. & 0.17 sec.     					& 414 sec.      				& 1562142      		 & 0.18 sec. \\\hline

    cuww2   & -4713321   	 	& 35.78 sec.					     	& \textgreater{}3.5h     & \textgreater{}3.5h         & 6600 sec.        			& -4713321  	 	& $<$0.01 sec.          \\\hline
    cuww3   & 1034115     	& \textless{}0.01 sec. 		        & 1.4 sec.      				& 0.24 sec.                 		 & 6126 sec.      		& -4697622      	          & 1357.38 sec.     				\\ \hline
cuww4   & -29355262       		& \textless{}0.01 sec.      & \textgreater{}1.5h     & \textgreater{}1.5h          & 38511 sec.            	& -29355262      & \textless{}0.01 sec.  	\\ \hline
cuww5   & -3246082       	& \ \ --                          	  & \textgreater{}1.5h   & 147.63 sec.                  & \textgreater{}80h      	& -3246082      	   & 0.124 sec.                		\\ \hline
prob1   & 9257735          	 & 3832 sec.        		  & 51.4 sec.                & 18.55 sec.                   & \textgreater{}3h     		& -4568716      		     & 0.5 sec.                      		 \\ \hline
prob2   & 3471390          	 & 0.48 sec.                  	    & 24.8 sec.                & 6.07 sec.                     & \textgreater{}10h   	& -968323        				   & 0.5 sec.                     				 \\ \hline
prob3   & 21291722        	& 0.54 sec.                 		    & 48.2 sec.                & 9.03 sec.                     & \textgreater{}12h   	  & -7757086    		        & 0.6 sec.                	 			 	 \\ \hline
prob4   & 6765166          	& 0.57 sec.           				    & 34.2 sec.               & 9.61 sec.                     & \textgreater{}5h     	  & -10798906    			       & 0.6 sec.                 		     	\\ \hline
prob5   & 12903963        	  & 0.5  sec.          				     & 34.5 sec.               & 9.94 sec.                     & \textgreater{}5h     	 & -6064042      		         & 0.9 sec.               			      	\\ \hline
prob6   & 2645069          	 & 2.6 sec.            				    & 143.2 sec.              & 19.21 sec.                   & \textgreater{}4h    	 & -328675       		        & unavailable                   		   	  \\ \hline
prob7   & 22915859        	 & 0.96 sec.              			     & 142.3 sec.              & 12.84 sec.                   & \textgreater{}4h    	 & unkonwing      		       & --                    					  \\ \hline
prob8   & 3546296            & 5.8 sec.              				     & 469.9 sec.              & 49.21 sec.                   & \textgreater{}3.5h 	& -1285293      		        & 10.2 sec.              			   	  \\ \hline
prob9   & 15507976        	 & 7887.645  sec. 			     & 1408.2 sec.             & 283.34 sec.                  & \textgreater{}11h 	 & -4896754         	         & 115.222 sec.      			   	   \\ \hline
prob10  & 47946931       	& 43.92 sec.            			       & 250.6 sec.              & 29.28 sec.                   & \textgreater{}11h  	   & -12314184      	          & 18.00 sec.         				  \\ \hline
\end{tabu}
}

  \caption{Optimal values and running times for the knapsack problems.}  \label{table-compute time}
\end{table}

We conclude this section by outlining potential enhancements to our BSCT algorithm.
\begin{enumerate}
\item Brion's theorem provides vertex cones whose generators are independent of \( k \) for each expression of the form \( \CT_t t^{-k} \cdot t^{-\m} h_i(t)/(1-t) \). Consequently, the data for Barvinok's simplicial cone decomposition can be reused for various values of \( k \).

\item The expectation (or variance) of the objective function can be computed using the formula \( \frac{d}{dt} g(P;\mathbf{1},t)\big|_{t=1} \). These computations may offer insights into estimating a suitable lower bound for \( \min(c,P) \).

\item A single cone version of the Digging algorithm is proposed in \cite{de2004three}. This concept could be integrated into our method, provided we have a reliable bound for \( \min(c,P) \), allowing us to disregard vertex cones that do not contribute to the optimal solution.
\end{enumerate}

\noindent
{\small \textbf{Acknowledgements:}
The authors wish to extend their gratitude to Shaoshi Chen for valuable suggestions, to Feihu Liu for careful reading of the draft and identifying several errors. The authors are also appreciative of the insightful comments and suggestions offered by the anonymous referees. This work was partially supported by the National Natural Science Foundation of China [12071311].

\end{document}